\newtheorem{theorem}{Theorem}[section]
\newtheorem{proposition}[theorem]{Proposition}
\newtheorem{conjecture}[theorem]{Conjecture}
\newtheorem{corollary}[theorem]{Corollary}
\theoremstyle{definition}
\newtheorem{definition}[theorem]{Definition}
\newtheorem{example}[theorem]{Example}
\theoremstyle{remark}
\numberwithin{equation}{section}
\newcommand{\wt}{\widetilde}
\newcommand{\calO}{\mathcal{O}}
\newcommand{\calV}{\mathcal{V}}
\newcommand{\bbA}{\mathbb{A}}
\newcommand{\bbC}{\mathbb{C}}
\DeclareMathOperator{\GL}{GL}
\DeclareMathOperator{\Tr}{Tr}
\DeclareMathOperator{\SL}{SL}
\DeclareMathOperator{\SO}{SO}
\DeclareMathOperator{\Sp}{Sp}
\DeclareMathOperator{\Hom}{Hom}
\DeclareMathOperator{\diag}{diag}
\DeclareMathOperator{\Mat}{Mat}
\DeclareMathOperator{\Rank}{Rank}
\DeclareMathOperator{\Ind}{Ind}
\DeclareMathOperator{\res}{res}
\DeclareMathOperator{\ind}{ind}
\DeclareMathOperator{\Res}{Res}
\begin{document}

\title{ Theta liftings of non-generic representations on double covers of orthogonal groups}

\author{Yusheng Lei}
\address{Department of Mathematics, Boston College,
	Chestnut Hill, MA-02467}
\email{yusheng.lei@bc.edu}

\subjclass[2021]{}

\date{\today.}


\keywords{Unipotent Orbit, Automorphic Representation, Theta Correspondence, Theta Tower}

\begin{abstract}
We study the generalized theta lifting between the double covers of split special orthogonal groups, which uses the non-minimal theta representations constructed by Bump, Friedberg and Ginzburg. We focus on the theta liftings of non-generic representations and make a conjecture that gives an upper bound of the first non-zero occurrence of the liftings, depending only on the unipotent orbit. We prove both global and local results that support the conjecture. 
\end{abstract}

\maketitle


.
\tableofcontents
\section{\textbf{Introduction}}
The classical theta correspondence, based on the Weil representation, has been much studied. Crucially, the Weil representation is a  minimal representation. This property plays a key role in establishing many properties of the classical theta correspondence. In a small number of cases, there are also non-minimal theta correspondences. Let $F$ be a number field containing the group of fourth roots of unity, with the ring of adeles $\bbA$. In \cite{BumpFG03}, Bump-Friedberg-Ginzburg constructed the global theta representation $\Theta_{m}$ on the double cover $\wt{\SO}_{m}(\bbA)$ of the split odd orthogonal group $\SO_{m}(\bbA)$ as the residues of certain metaplectic Eisenstein series. In contrast to the Weil representation, such a theta representation is not minimal. However,  its Fourier coefficients attached to most unipotent orbits vanish. This allows the construction of non-minimal theta liftings  \cite{BumpFG06}. Specifically, suppose $(\pi,\calV)$ is an irreducible cuspidal genuine automorphic representation of $\wt{\SO}_{2k+1}(\bbA)$. Let $\SO_{2k'}$ be a split even orthogonal group. The embedding  $\SO_{2k'}(\bbA)\times \SO_{2k+1}(\bbA)\hookrightarrow\SO_{2k+2k'+1}(\bbA)$ is covered by the embedding of $\wt{\SO}_{2k'}(\bbA) \times \wt{\SO}_{2k+1}(\bbA)$ into $\wt{\SO}_{2k+2k'+1}(\bbA)$. Consider the theta representation $\Theta_{2k+2k'+1}$ on $\wt{\SO}_{2k+2k'+1}(\bbA)$. For any $\varphi\in \calV$ and $\theta_{2k+2k'+1}$ a function in the representation space of $\Theta_{2k+2k'+1}$, Bump-Friedberg-Ginzburg defined a function $f$ on $\wt{\SO}_{2k'}(\bbA)$ (see equation (2) of \cite{BumpFG06}) via the integral
\begin{equation}
	f(h) = \int_{\SO_{2k+1}(F)\setminus \SO_{2k+1}(\bbA)} \varphi(g)\bar{\theta}_{2k+2k'+1}(h,g)dg.
\end{equation}
Functions of the form $f(h)$ generate a genuine automorphic representation $\Theta_{2k+2k'+1}(\pi)$ on the cover $\wt{\SO}_{2k'}(\bbA)$.



By fixing the representation $\pi$ and using the theta representations $\wt{\SO}_{2k+2k'+1}(\bbA)$ with varying $k'$, one obtains a tower of liftings of the representation $\pi$ to the groups $\wt{\SO}_{2k'}(\bbA)$. According to \cite{BumpFG06}, for a fixed genuine cuspidal automorphic representation $\pi$ on $\wt{\SO}_{2k+1}(\bbA)$, one has the  following:
\begin{enumerate}
	\item  As an automorphic representation of $\wt{\SO}_{8k}(\bbA)$, $\Theta_{10k+1}(\pi) \neq 0$.
	\item  If $\Theta_{2k+2k'+1}(\pi)=0$, then  $\Theta_{2k+2k'-1}(\pi)=0$.
\end{enumerate}
In view of these, it is natural to ask when the first non-zero lifting occurs along the tower.  In \cite{BumpFG06}, Bump-Friedberg-Ginzburg show that if $\Theta_{4k+5}(\pi)$ is generic as an automorphic representation of $\wt{\SO}_{2k+4}(\bbA)$, then the representation $\pi$ of $\wt{\SO}_{2k+1}(\bbA)$ must be generic as well. 
They also make a conjecture that a generic representation $\pi$ on $\wt{\SO}_{2k+1}(\bbA)$ should lift to a generic representation on $\wt{\SO}_{2k+4}(\bbA)$.


However, little is known if the representations are not generic, i.e. not supported on the maximal unipotent orbit. In this paper, we make a general conjecture on when the lift of a given automorphic representation of $\wt{\SO}_{2k+1}(\bbA)$ is nonzero, depending only on the unipotent orbit that the representation is supported on. Recall that unipotent orbits are parametrized by partition of integers. We require that the representation is supported on a unipotent orbit whose corresponding partition consists of only odd integers. This condition implies that the attached unipotent subgroup $V_{2,O}$ defined in \hyperref[sec3]{Section \ref*{sec3}} below is the unipotent radical of a parabolic subgroup.
\begin{conjecture}\label{cj1}
	Let $(\pi,\calV)$ be an irreducible cuspidal genuine automorphic representation of  $\wt{\SO}_{2k+1}(\bbA)$. Suppose $\pi$ is supported on the unipotent orbit
	\[
	\calO = \left((2n_1+1)^{r_1}(2n_2+1)^{r_2}\cdots(2n_p+1)^{r_p}\right) 
	\]
	with $n_1>n_2>\cdots >n_p\geqslant 0$ and $r_i>0$ for all $i$. Let $l= r_1+r_2+\cdots +r_p$ be the length of the partition corresponding to $\calO$. Then $\pi$ lifts nontrivially to an automorphic representation $\Theta_{4k+2l+3}(\pi)$ of $\wt{\SO}_{2k+2l+2}(\bbA)$ which is supported on the unipotent orbit
	\[
	\calO' =  \left((2n_1+3)^{r_1}(2n_2+3)^{r_2}\cdots(2n_p+3)^{r_p}(1)\right).\\
	\]
\end{conjecture}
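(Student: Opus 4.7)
The strategy is to compute the $(V_{2,\calO'},\psi_{\calO'})$-Fourier coefficient of the lift $\Theta_{4k+2l+3}(\pi)$ on $\wt{\SO}_{2k+2l+2}(\bbA)$ and reduce it, by a sequence of root-exchange manipulations, to the $(V_{2,\calO},\psi_\calO)$-Fourier coefficient of $\pi$, which is non-zero by hypothesis. Both $\calO$ and $\calO'$ have only odd parts (the latter because adjoining a single row of length one preserves this property), so the unipotents $V_{2,\calO}\subset \SO_{2k+1}$ and $V_{2,\calO'}\subset \SO_{2k+2l+2}$ are unipotent radicals of parabolic subgroups. I would first fix explicit models for these two unipotents via the pyramid construction, and then analyze how $V_{2,\calO'}\times V_{2,\calO}$ embeds in $\SO_{4k+2l+3}$ via the block embedding $\SO_{2k+2l+2}\times\SO_{2k+1}\hookrightarrow\SO_{4k+2l+3}$. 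The product will sit inside a canonical unipotent $V\subset\SO_{4k+2l+3}$ to which the character $\psi_{\calO'}\otimes\overline{\psi_\calO}$ extends, and a dimension count together with the shape of $\calO'$ should identify $V$ with the unipotent attached to an orbit on which the theta representation $\Theta_{4k+2l+3}$ of \cite{BumpFG03} admits non-vanishing Fourier coefficients.

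Second, starting from
\[
f^{V_{2,\calO'},\psi_{\calO'}}(h)\;=\;\int_{V_{2,\calO'}(F)\setminus V_{2,\calO'}(\bbA)} f(vh)\,\psi_{\calO'}^{-1}(v)\,dv,
\]
I would substitute the defining integral of $f$ from the excerpt, swap the order of integration, and run a sequence of root-exchange arguments in the style of Ginzburg--Rallis--Soudry to trade integrations over subgroups of $V_{2,\calO'}$ (acting on $\bar\theta_{4k+2l+3}$) for integrations over subgroups of $V_{2,\calO}$ (acting on $\varphi$). Each exchange is justified by vanishing of an intermediate Fourier coefficient, either of $\Theta_{4k+2l+3}$ (via the orbit bound on its Fourier support) or of $\pi$ (via cuspidality, when the intermediate orbit on $\SO_{2k+1}$ strictly exceeds $\calO$). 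At the end of the process the integral factors as the product of the $(V_{2,\calO},\psi_\calO)$-Fourier coefficient of $\varphi$ with a $(V,\psi_V)$-Fourier coefficient of $\bar\theta_{4k+2l+3}$; the first is non-zero by hypothesis on $\pi$ and the second is non-zero by the support properties of $\Theta_{4k+2l+3}$, yielding $\Theta_{4k+2l+3}(\pi)\neq 0$ and showing $\calO'$ is a non-vanishing orbit of the lift.

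To establish that $\calO'$ is actually maximal for the lift, I would repeat the same computation with $\calO'$ replaced by any orbit $\calO''>\calO'$ of $\SO_{2k+2l+2}$. The same root-exchange procedure produces, after unfolding, a Fourier coefficient of $\varphi$ along a unipotent attached to an orbit strictly greater than $\calO$ in $\SO_{2k+1}$; such a coefficient vanishes by the support hypothesis on $\pi$, which forces the $(V_{2,\calO''},\psi_{\calO''})$-Fourier coefficient of $\Theta_{4k+2l+3}(\pi)$ to vanish, and hence $\calO'$ is precisely the maximal orbit supporting the lift.

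The main obstacle is the root-exchange step: carrying it out requires explicit commutator relations inside $V$ and a precise identification of the sequence of intermediate unipotents whose auxiliary Fourier coefficients must vanish. Matching each such intermediate orbit against the known support bound of $\Theta_{4k+2l+3}$ becomes delicate when $\calO$ has multiple rows of distinct lengths, because the orbit induction from $\SO_{2k+2l+2}\times\SO_{2k+1}$ to $\SO_{4k+2l+3}$ cannot be performed in a single step. A secondary difficulty, specific to the double-cover setting, is verifying that all unipotent subgroups involved split canonically in $\wt{\SO}$, so that the metaplectic cocycle does not obstruct the integral manipulations or introduce extraneous genuineness constraints.
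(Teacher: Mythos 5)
The statement you are trying to prove is \hyperref[cj1]{Conjecture \ref*{cj1}}: the paper does not prove it, and offers only supporting evidence, namely the converse implication (\hyperref[mt]{Theorem \ref*{mt}}, proved as \hyperref[Thm1]{Theorem \ref*{Thm1}}), a consistency check with the dimension equation (\hyperref[deqp]{Proposition \ref*{deqp}}), and a local analogue (\hyperref[Thm2]{Theorem \ref*{Thm2}}). Your unfolding computation is essentially the paper's proof of \hyperref[Thm1]{Theorem \ref*{Thm1}}: substituting the theta-lift integral into $F_{\psi_{\calO'}}(f)$, expanding along Heisenberg quotients, and using the smallness of $\Theta_{4k+2l+3}$ (\hyperref[prop2]{Proposition \ref*{prop2}}) and the invariance property (\hyperref[prop3]{Proposition \ref*{prop3}}) to eliminate all but one orbit of characters at each stage. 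But the output of that computation is an identity of the form
\begin{equation*}
F_{\psi_{\calO'}}(f)(1)\;=\;\int_{U_\calO(\bbA)\setminus\SO_{2k+1}(\bbA)} F_{\psi_\calO,U_\calO}(\varphi)(g)\left(\int_{[R]}\overline{\theta}_{4k+2l+3}\bigl(u\,\omega_l z_l(1,g)\bigr)\psi_{\calO',p,I_l}(u)\,du\right)dg,
\end{equation*}
i.e.\ a pairing over the noncompact quotient $U_\calO(\bbA)\setminus\SO_{2k+1}(\bbA)$ of the $\psi_\calO$-coefficient of $\varphi$ against a Fourier coefficient of $\overline{\theta}_{4k+2l+3}$. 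Your final step claims the left-hand side is nonzero because each of the two factors is separately nonzero. That inference is invalid: nonvanishing of the two functions does not give nonvanishing of the integral of their product (for any choice of data), since the oscillation over $g$ can kill it. This is exactly the gap between \hyperref[Thm1]{Theorem \ref*{Thm1}} and \hyperref[cj1]{Conjecture \ref*{cj1}} — the identity only shows that vanishing of $F_{\psi_\calO,U_\calO}(\varphi)$ forces vanishing of $F_{\psi_{\calO'}}(f)$, which is the direction the paper proves. Establishing the conjectured direction would require a genuinely new input (for instance an inner-product / Rallis-type formula, or a local nonvanishing argument propagated globally), none of which appears in your sketch.

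Two secondary points. First, your appeal to ``cuspidality'' to kill intermediate coefficients of $\pi$ attached to orbits exceeding $\calO$ is misplaced: cuspidality kills constant terms, while vanishing above $\calO$ would come from maximality of $\calO$ in the wave front set; in the paper's unfolding this is not even needed, since all the vanishing used concerns $\Theta_{4k+2l+3}$ and semi-Whittaker coefficients on $\wt{\GL}$ covers. Second, your maximality step (treating $\calO''>\calO'$) is again only the ``vanishing'' direction and, even if carried out, would not repair the central nonvanishing gap above.
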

\hyperref[cj1]{Conjecture \ref*{cj1}} gives an upper bound of the first non-zero occurrence of the theta lifting. In the generic case where $\calO = (2k+1)$, \hyperref[cj1]{Conjecture \ref*{cj1}} agrees with the conjecture made in \cite{BumpFG06} and mentioned above. In \hyperref[deqp]{Proposition \ref*{deqp}}, we show that this conjecture is consistent with the ``dimension equation" described in \cite{GIN06}, \cite{GIN14} and \cite{FG19}, which proposes dimension constraints on when the first non-zero lifting may occur. We remark that not every orbit of $\SO_{2k+1}$ has all odd parts, but we do not know what to expect when there are even parts in the partition.

In this paper, we prove the following theorem which gives evidence towards the above conjecture. 
\begin{theorem}\label{mt}
		Let $(\pi, \calV)$ be an irreducible cuspidal genuine automorphic representation of $\wt{\SO}_{2k+1}(\bbA)$. Suppose the theta lifting $\Theta_{4k+2l+3}(\pi)$, as a representation of $\wt{\SO}_{2k+2l+2}(\bbA)$, has a non-zero Fourier coefficient associated with the unipotent orbit 
	\[
	\calO' =\left((2n_1+3)^{r_1}(2n_2+3)^{r_2}\cdots(2n_p+3)^{r_p}(1)\right).
	\]
	Then the representation $\pi$ has a non-zero Fourier coefficient associated with the unipotent orbit
	\[
	\calO =\left((2n_1+1)^{r_1}(2n_2+1)^{r_2}\cdots(2n_p+1)^{r_p}\right).
	\]
\end{theorem}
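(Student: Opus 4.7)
The plan is to unfold the nonzero $\calO'$-Fourier coefficient of $\Theta_{4k+2l+3}(\pi)$ using the explicit integral definition of the theta lifting, and show that after suitable root exchange it is expressible in terms of an $\calO$-Fourier coefficient of $\varphi$, thereby forcing the latter to be nonzero.

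First I would write, for $h \in \wt{\SO}_{2k+2l+2}(\bbA)$ and $f$ the theta lift of $\varphi$ as in equation~(1.1), the $\calO'$-Fourier coefficient
\[
f^{\calO'}(h) = \int_{V_{\calO'}(F)\setminus V_{\calO'}(\bbA)} f(vh)\,\psi_{\calO'}(v)\,dv,
\]
where $V_{\calO'}$ is the unipotent subgroup $V_{2,\calO'}$ associated with $\calO'$ and $\psi_{\calO'}$ is a generic character on it. Since every part of the partition defining $\calO'$ is odd (including the final part $1$), $V_{\calO'}$ is the unipotent radical of a standard parabolic of $\wt{\SO}_{2k+2l+2}$, which keeps the subsequent manipulations within the framework of parabolic Fourier coefficients. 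Substituting the definition of $f$ and interchanging the order of integration yields
\[
f^{\calO'}(h) = \int_{\SO_{2k+1}(F)\setminus \SO_{2k+1}(\bbA)} \varphi(g)\,\Theta^{\calO'}_{4k+2l+3}(h,g)\,dg,
\]
where $\Theta^{\calO'}_{4k+2l+3}(h,g) := \int_{V_{\calO'}(F)\setminus V_{\calO'}(\bbA)} \bar\theta_{4k+2l+3}(vh,g)\,\psi_{\calO'}(v)\,dv$ is a partial Fourier coefficient of the theta kernel against the image of $V_{\calO'}$ under the block embedding $\wt{\SO}_{2k+2l+2}\times\wt{\SO}_{2k+1} \hookrightarrow \wt{\SO}_{4k+2l+3}$.

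The heart of the argument is to analyze $\Theta^{\calO'}_{4k+2l+3}(h,g)$ using the known vanishing of Fourier coefficients of $\Theta_{4k+2l+3}$ from \cite{BumpFG03} and \cite{BumpFG06}, together with iterated root exchange. The strategy is to enlarge $V_{\calO'}$ (viewed inside $\wt{\SO}_{4k+2l+3}$) by a chain of abelian unipotent subgroups so as to complete it to a unipotent $\widetilde V$ attached to a unipotent orbit of $\wt{\SO}_{4k+2l+3}$ whose restriction to the two factors produces $V_{\calO'}$ on the $\wt{\SO}_{2k+2l+2}$-side and $V_\calO$ on the $\wt{\SO}_{2k+1}$-side. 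The vanishing results of Bump-Friedberg-Ginzburg restrict which orbits can support a non-trivial Fourier coefficient of $\Theta_{4k+2l+3}$, so after Fourier expansion along each newly added abelian piece only the contribution corresponding to the expected orbit survives. Keeping track of the characters under conjugation and using root exchange to transfer integrations from the first factor to the second, one rewrites
\[
f^{\calO'}(h) = \int (\text{residual theta factor})(h,g)\cdot \varphi^{\calO}(g)\,dg\,d(\text{auxiliary variables}),
\]
where $\varphi^{\calO}(g)$ is the $\calO$-Fourier coefficient of $\varphi$. Nonvanishing of the left side for some choice of data then forces $\varphi^{\calO}\not\equiv 0$ for some $\varphi\in \calV$.

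The principal obstacle will be the combinatorial root-exchange bookkeeping inside $\wt{\SO}_{4k+2l+3}$: choosing the correct sequence of abelian unipotents to glue $V_{\calO'}\otimes 1$ to $1\otimes V_\calO$, verifying at each step that the relevant characters are trivial on the subgroups being exchanged, and matching the intermediate orbits with those permitted by the support of $\Theta_{4k+2l+3}$. A secondary point is to identify the residual theta factor cleanly so that the outer integrations are convergent and the final integral can be chosen non-vanishing by varying the theta function and $\varphi$ independently. Once these are in place, absolute convergence and the final reduction are standard in the theta-correspondence literature.
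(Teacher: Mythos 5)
Your overall strategy --- unfold the $\calO'$-coefficient of the lift, Fourier-expand the resulting partial coefficient of the theta kernel along auxiliary unipotent subgroups, and use the small wave front set of $\Theta_{4k+2l+3}$ to isolate a term containing the $\calO$-coefficient of $\varphi$ --- is the same as the paper's. But there is a genuine gap at your central claim that ``only the contribution corresponding to the expected orbit survives'' by the Bump--Friedberg--Ginzburg vanishing results alone. In the expansion along each Heisenberg piece $H_{s_j}$, the orbit support of $\Theta_{4k+2l+3}$ (Proposition \ref{prop2}) only kills the characters whose associated matrix has a row space that is not totally isotropic; it says nothing about the trivial character (the constant term), nor about the totally isotropic characters of rank $q<s_j$. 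The paper disposes of these by a separate mechanism: an invariance property of the partial coefficient of the theta kernel (Proposition \ref{prop3} and Corollary \ref{cor}), which allows the integration over $R_{s_1}$ to be enlarged to $R_{s_1^2}$, after which the enlarged domain contains a root subgroup $x_\beta$ on which $\psi_{\calO'}$ is nontrivial, so the inner integral $\int_{\bbA/F}\psi(r)\,dr$ forces vanishing. That invariance property is not a consequence of the orbit vanishing on the orthogonal group: its proof requires, in addition, the constant-term structure of Proposition \ref{prop1} together with the vanishing of semi-Whittaker coefficients of the theta representation of the double cover of $\GL_{nr}$ (Cai's results). Your sketch neither states nor substitutes for this lemma, and ``iterated root exchange'' in the usual sense does not produce it.

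A secondary omission: once the surviving characters are identified (maximal rank, totally isotropic row space), the passage to the $\calO$-coefficient of $\varphi$ is not a formal change of variables. It uses that the stabilizers $P^0_{s_j}(F)$ (and, at the last stage, the groups $\SO_{2l+1}(F)$ and $\SO_l(F)$ acting on the matrices $\xi$) act transitively on the relevant character orbits, so the sums over rational representatives collapse into the outer $g$-integration, and that conjugation by the elements $\omega_{s_j}z_{s_j}$ carries $U_\calO$ into the big unipotent group $R$ with the character $\psi_{\calO',p,I_l}$ restricting to $\psi_\calO$. Without these identifications the ``residual theta factor'' you posit cannot be separated from the $\calO$-coefficient of $\varphi$, so the final nonvanishing conclusion does not yet follow from your outline.
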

\noindent In the generic case, \hyperref[mt]{Theorem \ref*{mt}} agrees with the result proved in \cite{BumpFG06} and mentioned above. 

Moveover, we establish a local counterpart of \hyperref[mt]{Theorem \ref*{mt}} (which is new even in the generic case). In the local setting, we turn our attention to the category of genuine admissible representations on the double covers of the split special orthogonal groups over a non-archimedean local field $F$.  In \cite{BumpFG03}, the local theta representation on the double cover of a split odd orthogonal group $\wt{\SO}_{2k+1}(F)$ is constructed as the image of an intertwining operator. Fourier coefficients as the global analytic tool are replaced by the twisted Jacquet modules. We prove the following result:
\begin{theorem}
	Let $(\pi,\calV)$ be an irreducible genuine admissible representation of $\wt{\SO}_{2k+1}(F)$. Suppose there exists an irreducible admissible representation $\Theta(\pi)$ of $\wt{\SO}_{2k+2l+2}(F)$ such that, as representations of the group $\wt{\SO}_{2k+1}(F)\times \wt{\SO}_{2k+2l+2}(F)$,
	\begin{equation}
		\Hom_{\wt{\SO}_{2k+1}\times \wt{\SO}_{2k+2l+2}}\left(\Theta_{4k+2l+3},\pi\otimes \Theta(\pi)\right) \neq 0.	\end{equation}
	Furthermore, suppose there exists a non-trivial character $\psi_{\calO'}$ (explicitly defined in \hyperref[sec6]{Section \ref*{sec6}}) associated with the unipotent orbit $\calO'$ such that the corresponding twisted Jacquet module of $\Theta(\pi)$  is non-zero. Then there exists a non-trivial character associated with the unipotent orbit $\calO$ such that the corresponding twisted Jacquet module of $\pi$ is also non-zero.
\end{theorem}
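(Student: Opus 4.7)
The plan is to parallel the proof of \hyperref[mt]{Theorem \ref*{mt}} in the local setting, replacing Fourier coefficients by twisted Jacquet modules, and replacing integral unfolding by exactness of Jacquet functors combined with a root-exchange argument.

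First I would translate the Hom-space assumption into a statement about Jacquet modules. Because the twisted Jacquet functor $J_{\psi_{\calO'}}$ on smooth representations of $\wt{\SO}_{2k+2l+2}(F)$ is exact, applying it to the second tensor factor of the given nonzero Hom space produces a nonzero morphism from $J_{\psi_{\calO'}}\!\left(\Theta_{4k+2l+3}\right)$ -- computed along $V_{2,\calO'}$ viewed as a subgroup of $\wt{\SO}_{4k+2l+3}$ via the embedding $\wt{\SO}_{2k+2l+2}\hookrightarrow \wt{\SO}_{4k+2l+3}$ -- to $\pi \otimes J_{\psi_{\calO'}}(\Theta(\pi))$, which is nonzero by hypothesis. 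The problem therefore reduces to computing $J_{\psi_{\calO'}}\!\left(\Theta_{4k+2l+3}\right)$ as a representation of $\wt{\SO}_{2k+1}(F)$ (tensored with a representation of the stabilizer of $\psi_{\calO'}$), and showing that any of its $\pi$-isotypic constituents have nonzero image under $J_{\psi_{\calO}}(\cdot)$ for a suitable character $\psi_{\calO}$ of $V_{2,\calO}$.

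Next I would carry out a root-exchange computation to identify this Jacquet module. The character $\psi_{\calO'}$ has been arranged in \hyperref[sec6]{Section \ref*{sec6}} so that its restriction to a specific subgroup of $V_{2,\calO'}$ meeting the $\wt{\SO}_{2k+1}$-factor coincides with $\psi_{\calO}$, the difference accounting for the shift $(2n_i+1)\mapsto (2n_i+3)$ together with the appended $(1)$-block. One enlarges $V_{2,\calO'}$ by adjoining root subgroups of $\wt{\SO}_{4k+2l+3}$ along which $\Theta_{4k+2l+3}$ already factors through a trivial character, and swaps these against root subgroups coming from the $\wt{\SO}_{2k+1}$-direction. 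The local vanishing of twisted Jacquet modules of $\Theta_{4k+2l+3}$ along unipotent orbits strictly larger than a prescribed one -- the local analogue of the Fourier-coefficient vanishing used in \cite{BumpFG06} and in the proof of \hyperref[mt]{Theorem \ref*{mt}} -- guarantees that the exchange introduces no extra terms. The outcome should be an isomorphism of the form
\[
J_{\psi_{\calO'}}\!\left(\Theta_{4k+2l+3}\right) \;\cong\; J_{\psi_{\calO}}\!\left(\Res_{\wt{\SO}_{2k+1}}\Theta_{4k+2l+3}\right)\otimes \sigma,
\]
where $\sigma$ is an auxiliary representation of the complementary Levi factor, built from a smaller theta representation.

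Substituting this identification back into the Hom space from the first step and projecting onto the $\pi$-isotypic quotient forces $J_{\psi_{\calO}}(\pi)\neq 0$ for the character $\psi_{\calO}$ arising in the identification, which is exactly the desired conclusion. The main obstacle is the middle step: the first and third steps are formal, but the root-exchange computation requires one to establish, purely at the level of Jacquet modules, the vanishing of $\Theta_{4k+2l+3}$ along all unipotent orbits strictly larger than the distinguished orbit, and to keep careful track of how the various root subgroups of $\wt{\SO}_{4k+2l+3}$ commute after being twisted by the shared character. This parallels the technical heart of the global argument and is where the bulk of the bookkeeping will occur.
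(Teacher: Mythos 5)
Your overall plan (factor the intertwiner through $J_{V_{\calO'},\psi_{\calO'}}$, then peel off the orbit $\calO'$ layer by layer against the smallness of $\Theta_{4k+2l+3}$) is the right one, but the mechanism you give for the middle step is not sufficient, and the identification you want it to produce is stronger than what is true. When one expands along the Heisenberg quotient $H_{s_1}/Z(H_{s_1})$ (via the exact sequence of Bernstein--Zelevinsky type, Lemma A.1 of \cite{LE17}), the characters whose matrices have non--totally-isotropic row space are indeed killed by the smallness of $\Theta_{4k+2l+3}$ (\hyperref[prop62]{Proposition \ref*{prop62}}), exactly as you say. But the troublesome pieces are the trivial character and the totally isotropic characters of rank $q<s_1$: after conjugation by $\omega_qz_q$ these filtration pieces are of the form $J_{R_{s_1},\psi_1}(\Theta^{\omega_qz_q}_{4k+2l+3})$, which are attached to a degenerate character, not to an orbit exceeding $\calO(\Theta_{4k+2l+3})$, and in general they do \emph{not} vanish. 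In the paper they are eliminated only through an equivariance mismatch with the \emph{target}: the root subgroup $x_\beta$, with $\beta$ as in (\ref{root}), lies in $R_{s_1^2}\cap \omega_qz_qV^1_{\calO'}(\omega_qz_q)^{-1}$ precisely when $q<s_1$, acts trivially on those pieces, but acts by the nontrivial character $\psi_{\calO'}$ on $J_{V_{\calO'},\psi_{\calO'}}(\Theta(\pi))$, so no nonzero map can factor through them. Since the elimination uses the target of the Hom space, your proposed intrinsic isomorphism $J_{\psi_{\calO'}}(\Theta_{4k+2l+3})\cong J_{\psi_{\calO}}\bigl(\Res_{\wt{\SO}_{2k+1}}\Theta_{4k+2l+3}\bigr)\otimes\sigma$ is both unproved and more than can hold; what survives is only a filtration statement together with a nonvanishing Hom-space statement.

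A second structural point: over a local field the surviving open-orbit contribution is not a tensor factor but a compact induction from the stabilizer of the relevant character, namely $\ind_{Q_{s_1}\times P^0_{s_1}}^{Q_{s_1}\times\wt{\SO}_{2k+1}}\bigl(J_{H_{s_1},\psi_{\xi}}(\Theta_{4k+2l+3})\bigr)$ at the first stage and, after iterating, $\ind_{U_\calO}^{\wt{\SO}_{2k+1}}\bigl(J_{R,\psi_{V_{\calO'},p,I_l}}(\Theta^{w_lz_l}_{4k+2l+3})\bigr)$. This induced structure is what drives the conclusion: $U_\calO(F)$ acts on that module by $\psi_\calO$ because $(w_lz_l)^{-1}U_\calO w_lz_l\subset R$, and this is what forces $J_{U_\calO,\psi_\calO}(\pi)\neq 0$; a tensor decomposition of the kind you posit loses exactly this. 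Two further ingredients are left implicit in your sketch and need proof: the local invariance statement \hyperref[prop63]{Proposition \ref*{prop63}} (that $R_{s,2k-2s+1}(F)$ acts trivially on $J_{R_{s,2k+1},\psi_1}(\Theta_{2k+1})$), which rests on the vanishing of semi-Whittaker coefficients of $\Theta_{\GL}$ from \cite{CAI19} and is the genuine local substitute for your ``adjoin root subgroups with trivial character'' step; and the final orbit analysis on $H_p/Z(H_p)$, where one must check that the surviving characters correspond to matrices with maximal totally isotropic row space and that $\SO_l(F)$ acts transitively on them. Finally, a small but real point in your first step: the nonvanishing of the induced map to $\pi\otimes J_{V_{\calO'},\psi_{\calO'}}(\Theta(\pi))$ uses surjectivity of the intertwiner, i.e.\ the irreducibility of $\pi\otimes\Theta(\pi)$, not merely exactness of the Jacquet functor.
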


In the case of the classical symplectic-orthogonal theta liftings based on the Weil representation, Ginzburg-Gurevich \cite{GinG05} give both upper and lower bounds for the first non-zero occurrence in the theta tower. These bounds can be parametrized by the partition corresponding to the unipotent orbit that supports the cuspidal automorphic representation of $\Sp_{2k}(\bbA)$.  

The liftings considered here are related to the extension of Langlands functoriality to covering groups as follows. According to \cite{McN09}, \cite{Wei11} and \cite{Sa88}, one can define the dual group of a metaplectic group. In the case of the metaplectic double cover of $\SO_m$, we have that 
\[
^{L}\wt{\SO}_{m}^0\cong \SO_m(\bbC).
\]
This suggests that there should be a lifting of genuine automorphic representations from $\wt{\SO}_{2k+1}$ to $\wt{\SO}_{2k'}$ corresponding to the inclusion of $\SO_{2k+1}(\bbC)$ into $\SO_{2k'}(\bbC)$ with $k'>k$.

The study of non-generic cuspidal automorphic representations is an important part of understanding the automorphic
discrete spectrum. Jiang \cite{Jiang14} proposed a conjecture that relates Arthur parameters to the maximal unipotent orbit that supports an automorphic representation. In Section 13 of \cite{LE17}, Leslie conjectured an extension of Arthur parameters to the metapletic groups. In view of these works, the results of this paper are conjecturally related to the question of how Arthur parameters behave under the non-minimal theta liftings introduced in \cite{BumpFG06}. 

This paper is organized as follows: After setting up the basic notations, we briefly recall the construction of the metaplectic double cover of the split orthogonal groups in \hyperref[sec2]{Section \ref*{sec2}}. In \hyperref[sec3]{Section \ref*{sec3}}, we review the definition of the unipotent orbits and recall the construction of the Fourier coefficients and the twisted Jacquet modules associated to a unipotent orbit. These are the global and local tools for proving the respective main theorems. In \hyperref[sec4]{Section \ref*{sec4}}, we briefly recall the construction of both the local and global theta representations of the double cover $\wt{\SO}_{2k+1}$. We then prove an invariance property of the theta representations which is crucial for the proof of the main theorem. We also establish the compatibility of \hyperref[cj1]{Conjecture \ref*{cj1}} with the dimension equation.  In \hyperref[sec5]{Section \ref*{sec5}}, we prove the global main theorem \hyperref[mt]{Theorem \ref*{mt}}. Lastly, the local theory is treated in \hyperref[sec6]{Section \ref*{sec6}}.\\

\noindent \textbf{Acknowledgement.} I would like to express my gratitude to my advisor Solomon Friedberg for suggesting this research topic, as well as for providing a tremendous amount of advice and support. I would also like to thank Yuanqing Cai, Stella Sue Gastineau, David Ginzburg and Hao Li for many helpful discussions. 

\section{\textbf{Preliminaries}}\label{sec2}
\subsection{Split orthogonal groups}
In this paper, we let $F$ be either a number field with ring of adeles $\bbA$ or a non-archimedean local field of residue characteristic not equal to $2$.  Fix an algebraic closure $\bar{F}$ of $F$, and denote by 
\[
\mu_4=\{x\in \bar{F} :x^4=1\}
\]the group of all forth roots of unity in $\bar{F}$. Throughout this paper, we assume that $F$ contains $\mu_4$. We also fix a choice of non-trivial additive character by $\psi: F\setminus \bbA \rightarrow \bbC^\times$ when $F$ is a number field, or by $\psi:  F \rightarrow\bbC^\times$ which is unramified when $F$ is a non-archimedean local field.

For any positive integer $m$, let $\SO_{m}(F)$ denote the split special orthogonal group consisting of $g\in \SL_{m}$ such that $gJ_mg^T =J_m$, where
\[
J_m = \begin{pmatrix}
&&&&1\\
&&&1&\\
&&\iddots&&\\
&1&&&\\
1&&&&
\end{pmatrix} \in \Mat_{m\times m}(F).
\]
The maximal unipotent subgroup  of $\SO_{m}$ contains $n = \lfloor m/2 \rfloor$ positive simple roots. Let $e_{i,j}$ be the $m\times m$ matrix with value one on the $(i,j)$-th entry and zero elsewhere. We denote by $\alpha_i$ ($1\leqslant i\leqslant n$) the positive simple roots with respect to the usual order in the standard Borel subgroup of upper triangular matrices, and we let each of the corresponding one-parameter subgroups be $r\mapsto x_{\alpha_i}(r)$, where
\[
x_{\alpha_i}(r)= \exp\left(r(e_{i,i+1}-e_{n-i,n-i+1})\right)
\]
when $m$ is odd, and
\[
x_{\alpha_i}(r)=\begin{cases}
\exp\left(r(e_{i,i+1}-e_{n-i,n-i+1})\right)&\text{if}\,1\leqslant i<n\\
\exp\left(r(e_{n-1,n+1}-e_{n,n+2})\right)&\text{if}\ i =n
\end{cases}
\]
when $m$ is even. 

We fix an embedding of any two orthogonal groups $\SO_{2k+1}$ and $\SO_{2k'}$ into $\SO_{2k+2k'+1}$ by
\begin{equation}\label{ebd}
	\iota(h,g) =\begin{pmatrix}
		a&0&b\\
		0&g&0\\
		c&0&d
	\end{pmatrix}\in \SO_{2k+2k'+1},\quad g\in\SO_{2k+1},h= \begin{pmatrix}
		a&b\\c&d
	\end{pmatrix}\in\SO_{2k'}.
\end{equation}

\subsection{The double cover of the split orthogonal group $\SO_m$} 
Suppose $F$ is a non-archimedean local field. Let $\wt{\SL}_m(F)$ be the metaplectic $4$-fold cover defined in \cite{Mat69} and \cite{KP84}, with the corresponding cocycle denoted by $\sigma$. This covering group satisfies the short exact sequence 
\begin{equation}
1\rightarrow \mu_4\rightarrow \wt{\SL}^{(\sigma)}_m(F)\rightarrow \SL_m(F)\rightarrow 1.
\end{equation}
Pulling back the image of $\SO_m(F)$ in $\SL_{m}(F)$, we obtain a central extension $\wt{\SO}_m(F)$ of $\SO_m(F)$. According to \cite{BumpFG00} and \cite{BumpFG03}, the square of the cocycle $\sigma$ restricted to $\SO_m(F)$ is almost trivial. Hence, this gives a double cover of the split orthogonal group $\SO_m(F)$.

The same construction goes through in the global situation as well. Suppose $F$ is a number field with its ring of adeles $\bbA$. We denote the double cover of $\SO_m(\bbA)$ by $\wt{\SO}_m(\bbA)$. 

\begin{proposition}\label{p21}
	The discrete subgroup $\SO_m(F)$ and the unipotent radical $N(\bbA)$ of the upper triangular Borel subgroup of $\SO_m(\bbA)$ split in the double cover $\wt{\SO}_m(\bbA)$. 
\end{proposition}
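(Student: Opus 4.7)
The plan is to produce the splittings in the ambient metaplectic $4$-fold cover $\wt{\SL}_m(\bbA)$ and transfer them to $\wt{\SO}_m(\bbA)$ through the pullback/pushdown construction recalled just before the statement; it thus suffices to exhibit group-theoretic sections of $\wt{\SL}_m(\bbA)\to\SL_m(\bbA)$ over the subgroups $\SO_m(F)$ and $N(\bbA)$, and then restrict them to (and descend them along) the pulled-back cover.

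For the unipotent radical: since $N(\bbA)$ sits inside the group $U(\bbA)$ of upper-triangular unipotent matrices of $\SL_m(\bbA)$, the key input is that the Matsumoto--Kazhdan--Patterson cocycle $\sigma$ is trivial on $U(\bbA)\times U(\bbA)$. This is visible directly from the explicit cocycle formulas in \cite{Mat69} and \cite{KP84}: each local factor is built out of Hilbert symbols of Bruhat-decomposition data, and these reduce to $1$ on upper-triangular unipotent arguments. Hence the trivial section $u\mapsto(u,1)$ is a group homomorphism on $U(\bbA)$; restricting it to $N(\bbA)$ and composing with the pushdown to $\wt{\SO}_m(\bbA)$ produces the desired splitting.

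For the discrete subgroup: I would derive the splitting of $\SO_m(F)$ from the classical splitting of $\SL_m(F)$ in $\wt{\SL}_m(\bbA)$ due to Kubota (for $\SL_2$) and Matsumoto (in general). That splitting rests on Hilbert reciprocity (the product formula for Hilbert symbols), which applies here because the standing hypothesis $\mu_4\subseteq F$ guarantees that all the relevant symbols are defined and obey reciprocity: locally, each factor of $\sigma$ is a product of local Hilbert symbols evaluated on data attached to the Bruhat decomposition, and reciprocity shows that the product over all places, evaluated on $F$-rational inputs, is a coboundary. Restricting the resulting section $\SL_m(F)\to\wt{\SL}_m(\bbA)$ to $\SO_m(F)\subseteq\SL_m(F)$ and descending to the double cover yields the splitting of $\SO_m(F)$ in $\wt{\SO}_m(\bbA)$.

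The only step requiring genuine input is the appeal to reciprocity in the rational case, but this is a classical tool in the theory of metaplectic covers and so is not a real obstacle; transferring both splittings from the ambient cover $\wt{\SL}_m(\bbA)$ to $\wt{\SO}_m(\bbA)$ is formal, since the pullback along $\SO_m(\bbA)\hookrightarrow\SL_m(\bbA)$ and the further quotient by the kernel $\mu_4/\mu_2$ of the $4$-fold-to-$2$-fold projection both preserve group homomorphisms.
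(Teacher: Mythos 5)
The gap is the step you declare formal: the passage from the $4$-fold cover $\wt{\SL}_m(\bbA)$ to the double cover $\wt{\SO}_m(\bbA)$. First, your structural picture of that passage is not correct: $\mu_4/\mu_2$ is not a subgroup of the pulled-back cover, and if you instead quotient the pullback by the subgroup $\mu_2\subset\mu_4$ you are pushing the cocycle forward along $\mu_4\to\mu_4/\mu_2$, i.e.\ forming the extension attached to $\sigma^2|_{\SO_m}$ --- which is exactly the class that \cite{BumpFG00}, \cite{BumpFG03} say is (almost) trivial. So that quotient is (almost) the split double cover, not $\wt{\SO}_m(\bbA)$. The actual construction runs the other way: since $\sigma^2|_{\SO_m}$ is a coboundary up to an explicit correction, $\sigma|_{\SO_m}$ is cohomologous to a $\mu_2$-valued cocycle $\tau$, and $\wt{\SO}_m$ is the $\mu_2$-extension defined by $\tau$; the pullback of the $4$-fold cover is then the pushout of $\wt{\SO}_m$ along $\mu_2\hookrightarrow\mu_4$. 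With this correct picture the transfer of splittings is not formal: if a subgroup $H$ splits in the $\mu_4$-extension, one only gets a $\mu_4$-valued cochain $u$ on $H$ with $\partial u=\tau|_H$; then $u^2$ is a homomorphism $H\to\mu_4$, and the obstruction to producing a genuine $\mu_2$-valued splitting of $\wt{\SO}_m$ over $H$ is a class in $\Hom(H,\mu_2)$ modulo the image of $\Hom(H,\mu_4)$. A $\mu_2$-extension can very well become split after pushout to $\mu_4$ without being split itself (compare $\bbZ/4\to\bbZ/2$), so splitting upstairs in $\wt{\SL}_m(\bbA)$ does not by itself give the Proposition.

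For $H=N(\bbA)$ this obstruction dies for a reason you did not state: an adelic unipotent group in characteristic zero is uniquely divisible, so $\Hom(N(\bbA),\mu_2)=1$ and in fact $u^2=1$ automatically; granting the (standard) triviality of the block-compatible cocycle on unipotent pairs, that half of your argument can be repaired with one extra sentence. For $H=\SO_m(F)$, however, the abelianization is $F^\times/(F^\times)^2$ via the spinor norm, so $\Hom(\SO_m(F),\mu_2)$ is large, while every character into $\mu_4$ already lands in $\mu_2$ and hence contributes nothing to kill the obstruction; thus restricting the Kubota--Matsumoto splitting of $\SL_m(F)$ only shows that $\SO_m(F)$ splits in the pulled-back $\mu_4$-extension, which is strictly weaker than the statement. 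To conclude, you must track the explicit correcting coboundary (the ``almost'' in the almost-triviality of $\sigma^2|_{\SO_m}$, which is a Hilbert-symbol/spinor-norm type factor) and verify that the corrected section is still a homomorphism into the $\mu_2$-cover on rational points; this explicit verification is precisely what the paper's one-line proof delegates to Section 3 of \cite{Ka17}, and it is the content your sketch omits.
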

\begin{proof}
	See Section 3 of \cite{Ka17}.
	\end{proof}
\section{\textbf{Fourier coefficients associated to a unipotent orbit}}\label{sec3}
In this section, we recall the connection between Fourier coefficients and unipotent orbits of the odd orthogonal group $\SO_{2k+1}$. See \cite{CM93}, \cite{GIN06} and \cite{GIN14} for more details.

\subsection{Unipotent orbits}
Let $F$ be either a number field or a non-archimedean local field, with a fixed algebraic closure $\bar{F}$. Unipotent orbits of the group $\SO_{2k+1}$ are parametrized by partitions of $2k+1$ with the restriction that each even number occurs with even multiplicity. For an orbit $\calO$ corresponding to the partition $(p_1^{r_1}p_2^{r_2}\cdots p_s^{r_s})$ where $p_i> p_{i+1}$ and $r_i>0$ for all $i$, we write 
\[
\calO = (p_1^{r_1}p_2^{r_2}\cdots p_s^{r_s}).
\]
Define the length of the partition to be $l = r_1+r_2+\cdots +r_s$.

 Suppose $\calO_1 =(p_1p_2\cdots p_r)$ and $\calO_2 =(q_1q_2\cdots q_s)$. We impose a partial order by  $\calO_1 \geqslant \calO_2$ if $p_1+\cdots+p_i \geqslant q_1+\cdots +q_i$ for all $1\leqslant i \leqslant s$.

Let $\calO= (p_1^{r_1}p_2^{r_2}\cdots p_s^{r_s})$. For each $p_i$, we associate $r^i$ copies of the torus element 
\[
h_{p_i}(t) = \diag(t^{p_i-1}, t^{p_i-3}, \cdots, t^{3-p_i}, t^{1-p_i} ).
\]
We obtain a one parameter torus element $h_{\calO}(t)$ with  non-increasing powers of $t$ along the diagonal after combining and rearranging all the $h_{p_i}(t)$'s. For example, if $\calO =(3^21)$, then
\[
h_{\calO}(t) = \diag(t^2,t^2,1,1,1,t^{-2},t^{-2}).
\]
 
The conjugation action of $h_\calO(t)$ on the unipotent radical $N$ of the upper triangular Borel subgroup $B$ of $\SO_{2k+1}$ induces a filtration on $N$
\[
I_{2k+1} \subset \cdots \subset V_{2,\calO} \subset V_{1,\calO}\subset V_{0,\calO}=N,
\]
where 
\[
V_{i,\calO}= \{x_{\alpha}(r)\in N : h_\calO(t)x_\alpha(r)h_\calO(t)^{-1} =x_\alpha(t^jr) \quad \text{for some}\quad j\geqslant i\}.
\]
Define
\[
M(\calO) = T\cdot \{x_{\pm\alpha}(r):  h_\calO(t)x_\alpha(r)h_\calO(t)^{-1} =x_\alpha(r)\}.
\]
Then $P(\calO) = M(\calO)V_{1,\calO}$ is a standard maximal parabolic subgroup of $\SO_{2k+1}$.

We say that a unipotent orbit $\calO= (p_1^{r_1}p_2^{r_2}\cdots p_s^{r_s})$ is odd if all the integers $p_i$ are odd. In \hyperref[gfc]{Section \ref*{gfc}}, we will see that the Fourier coefficients associated to the unipotent orbit $\calO$ are given by integration against $V_{2,\calO}$. In general, $V_{2,\calO}$ is not the unipotent radical of a parabolic subgroup. However, if $\calO$ is odd, then $V_{2,\calO}=V_{1,\calO}$ is the unipotent radical of the parabolic subgroup $P(\calO)$.

 Let $V_{2,\calO}^{(1)}$ be the commutator subgroup of $ V_{2,\calO}$. Over $\bar{F}$, the Levi subgroup $M(\calO)$ acts by conjugation on the maximal abelian quotient $V_{2,\calO}/V_{2,\calO}^{(1)}$ with a dense open orbit. Pick a representative $u_0$ of this orbit, and set $M^{u_0}(\calO)(\bar{F})$ to be its stabilizer. Although the group $M^{u_0}(\calO)(\bar{F})$ depends on the choice of $u_0$, its Cartan type is independent of the choice.

\subsection{Generic characters} Suppose $F$ is a number field with  ring of adeles $\bbA$. Let $L_{2,\calO} =V_{2,\calO}/V_{2,\calO}^{(1)}$ be the maximal abelian quotient of $V_{2,\calO}$. The action by conjugation of $M(\calO)$ on $V_{2,\calO}$ induces an action of $M(\calO)(F)$ on the character group
\[
\widehat{L_{2,\calO}(F)\setminus L_{2,\calO}(\bbA)}\cong L_{2,\calO} (F).
\]  

We call $\psi_{\calO}: L_{2,\calO}(F)\setminus L_{2,\calO}(\bbA) \rightarrow \bbC^\times$ a generic character if the connected component of its stabilizer in $M(\calO)(F)$ is of the same type as $M^{u_0}(\calO)(\bar{F})$. We extend any such character trivially to $V_{2,\calO}(F)\setminus V_{2,\calO}(\bbA)$. There may exist infinitely many $M(\calO)$-conjugacy classes of such generic characters for a specific unipotent orbit $\calO$. 
\begin{example}
	Let $\calO$ be the unipotent orbit corresponding to the partition $(3^21)$ in $\SO_{7}$.  We have
	\[
	V_{2,\calO} = \left\{\begin{pmatrix}
		I_2&X&Y\\
		&I_3&X^\ast\\
		&&I_2
	\end{pmatrix}\in \SO_{7}: X\in \Mat_{2\times 3}, \,X^\ast=-J_3X^TJ_2, \,Y^TJ_2+J_2Y=0\right\}.
	\]
	A choice of generic character is $\psi_\calO: V_{2,\calO}(F)\setminus V_{2,\calO}(\bbA)\rightarrow \bbC^\times$ given by
	\[
	\psi_\calO(v) = \psi(v_{1,3}+v_{2,5}).
	\]
By Pontryagin dualilty, we may identify each generic character with an element in $L_{2,\calO} (F) \cong \Mat_{2\times 3}(F)$. The above character corresponds to the matrix
	\[
	\begin{pmatrix}
		1&0&0\\
		0&0&1
	\end{pmatrix}.
	\]
	The rank of the matrix and the fact that its row space is not totally isotropic are invariant under the action of $M(\calO) \cong \GL_2(F)\times \SO_3(F)$ on $\Mat_{2\times 3}(F)$. Any choice with full rank and non-totally-isotropic row space corresponds to a generic character. 
\end{example}

\subsection{Global Fourier coefficients} \label{gfc}Let $(\pi, \calV)$ be an automorphic representation of $\SO_{2k+1}(\bbA)$. We define the Fourier coefficients of $\pi$ associated with a unipotent orbit $\calO$ by the following:
\begin{definition}\label{Def1}
	Let $\psi_\calO : V_{2,\calO}(F)\setminus V_{2,\calO}(\bbA) \rightarrow \bbC^\times $ be a generic character associated with a unipotent orbit $\calO$ in $\SO_{2k+1}$. For an automorphic function $\varphi\in\pi$,  the Fourier coefficient of $\varphi$ with respect to $\psi_\calO$ is
	\begin{equation}
F_{\psi_\calO}(\varphi_\pi)(g) = \int_{[V_{2,\calO}]}\varphi(ug)\psi_\calO(u) \,du.
	\end{equation}
\end{definition}
	Henceforth, we use $[K]$ to denote $K(F)\setminus K(\bbA)$ for any group $K$. We say that the orbit $\calO$ supports $\pi$ if there exists some $\varphi\in \calV$ and $\psi_\calO$ generic such that the above integral is non-zero. Otherwise, we say that $\calO$ does not support the representation $\pi$.
	
\subsection{Twisted Jacquet modules} \label{sec33} Suppose now $F$ is a non-archimedean local field.  Let $U$ be a unipotent subgroup of $\SO_{2k+1}(F)$, with $\psi_U:U\rightarrow \bbC^\times$ a character on $U$. Let $(\pi,\calV)$ be a smooth representaion of $\SO_{2k+1}(F)$. Suppose there exists a subgroup $M\subset \SO_{2k+1}(F)$ which normalizes $U$ and stablizes the character $\psi_U$.  Consider the subspace $\calV(U,\psi_U)$ of $\calV$ generated by vectors of the form $\{\pi(u)v-\psi_U(u)v\,\vert\, v\in \calV, u\in U\}$. The twisted Jacquet module of $\pi$ with respect to $\psi_U$ is defined by $J_{U,\psi_U}(\pi) =\calV/\calV(U,\psi_U)$. If $\psi_U$ is trivial, we denote it by $J_{U}(\pi)$ and call it the Jacquet module of $\pi$ with respect to $U$. This defines an exact functor between the categories of smooth representations on the two groups 
	\[
	J_{U,\psi_U} : Rep(\SO_{2k+1}(F))\rightarrow Rep(M).
	\]

The Levi subgroup $M(\calO)(F)$  acts on $L_{2,\calO}(F)$, and hence on the character group 
	\[
	\widehat{L_{2,\calO}(F)} \cong L_{2,\calO}(F). 
	\]
	Again, we only look at those generic characters whose stablizer under this action is of the same Cartan type as $M^{\mu_0}(\calO)(\bar{F})$. 
	\begin{definition}
		Let $(\pi,\calV)$ be an admissible representation of $\SO_{2k+1}(F)$. The twisted Jacquet module of $\pi$ associated to a unipotent orbit $\calO$ and a generic character $\psi_\calO: V_{2,\calO}(F)\rightarrow \bbC^\times$ is given by
		\begin{equation}\label{jm}
			J_{V_{2,\calO},\psi_\calO}(\pi).
		\end{equation}
		We say that the unipotent orbit $\calO$ supports $\pi$ if there exists some generic character $\psi_\calO$ such that (\ref{jm}) is non-zero.
	\end{definition}
\subsection{Wave front sets} We have the following definition that applies to both global and local situations.
\begin{definition}
		Let $(\pi,\calV)$ be either an automorphic representation of $\SO_{2k+1}(\bbA)$ (where $\bbA$ is the ring of adeles of a number field $F$) or an admissible representation of $\SO_{2k+1}(F)$ (where $F$ is a non-archimedean local field). The wave front set $\calO(\pi)$ is the set of maximal unipotent orbits in $\SO_{2k+1}$ such that $\calO\in\calO(\pi)$.
	\end{definition}

\section{\textbf{Theta representations and the tower of theta liftings}}\label{sec4}
\subsection{Local theta representations}\label{sec41}
Let $F$ be a non-archimedean local field. In \cite{BumpFG03}, Bump-Friedberg-Ginzburg constructed the local theta representation $\Theta_{2k+1}$ of the double cover $\wt{\SO}_{2k+1}(F)$ as the irreducible image of an intertwining operator 

The theta representation $\Theta_{2k+1}$ of the double cover $\wt{\SO}_{2k+1}(F)$ is a small representation in the terminology of \cite{BumpFG03}. It agrees with the minimal representation when $k=2$ or $3$. When $k= 4$ or $5$, $\calO(\Theta_{2k+1})$ is the singleton set containing the next smallest unipotent orbit. Bump-Friedberg-Ginzburg \cite{BumpFG03} proved the following result:
\begin{proposition}\label{prop62} Let $\Theta_{2k+1}$ be the theta representation of the double cover $\wt{\SO}_{2k+1}(F)$. Let $n$ be a positive integer. Then
\begin{equation}\label{orbit}
	\calO(\Theta_{2k+1})=\begin{cases}
		(2^{2n}1) &\text{if}\; k=2n,\\
		(2^{2n}1^3) &\text{if}\;k =2n+1.
	\end{cases}
\end{equation}
\end{proposition}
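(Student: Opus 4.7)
The plan is to establish $\calO(\Theta_{2k+1}) = \{\calO_k\}$, where $\calO_k = (2^{2n}1)$ if $k = 2n$ and $\calO_k = (2^{2n}1^{3})$ if $k = 2n+1$, by proving two statements: (a) there exists a non-zero twisted Jacquet module of $\Theta_{2k+1}$ at $\calO_k$ with respect to some generic character $\psi_{\calO_k}$, and (b) for every unipotent orbit $\calO'$ of $\SO_{2k+1}$ strictly greater than $\calO_k$ in the dominance order, every twisted Jacquet module $J_{V_{2,\calO'},\psi_{\calO'}}(\Theta_{2k+1})$ vanishes.

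For (a), I would use the construction of $\Theta_{2k+1}$ recalled in \hyperref[sec41]{Section \ref*{sec41}} as the irreducible image of an intertwining operator on a genuine parabolically induced representation of $\wt{\SO}_{2k+1}(F)$. Starting from a distinguished spherical vector in the induced representation, one unfolds the twisted Jacquet integral along the inducing parabolic. After a Bruhat decomposition, the computation reduces to an explicit integral over a unipotent subgroup against a character picking out the dense orbit of $M(\calO_k)$ on the abelianization $V_{2,\calO_k}/[V_{2,\calO_k},V_{2,\calO_k}]$; this integral is evaluated by a Gindikin-Karpelevich type formula and is manifestly non-zero for generic parameters, which then shows the twisted Jacquet module does not vanish.

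For (b), I would proceed by root exchange, in the style of Ginzburg-Rallis-Soudry. Given $\calO' > \calO_k$, the subgroup $V_{2,\calO'}$ contains root subgroups $x_\alpha(r)$ on which the generic character $\psi_{\calO'}$ is trivial but which lie outside $V_{2,\calO_k}$. One exchanges each such $x_\alpha(r)$ with a matching root in the opposite unipotent, using the fact that $\Theta_{2k+1}$ is a small representation whose Jacquet modules along standard parabolics are controlled by the construction. Iterating the procedure reduces the twisted Jacquet module of $\Theta_{2k+1}$ at $\calO'$ to an integral whose character is forced to be trivial on a one-parameter unipotent along which the relevant Jacquet module of the induced representation is already known to vanish, yielding the desired vanishing.

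The principal obstacle is (b): one must enumerate all candidate orbits $\calO' > \calO_k$ subject to the constraint that even parts appear with even multiplicity, and then carry out the root-exchange reduction uniformly across this family. The parity dichotomy in the statement reflects a genuine structural difference, since when $k = 2n+1$ the three extra $1$-parts in $(2^{2n}1^{3})$ produce an $\SO_3$ factor inside the Levi $M(\calO_k)$ that complicates both the choice of generic character and the bookkeeping of root exchanges. In addition, since the representation lives on a double cover, one must verify throughout that the characters $\psi_{\calO'}$ interact correctly with the canonical splittings of unipotent subgroups furnished by \hyperref[p21]{Proposition \ref*{p21}}, so that the twisted Jacquet modules are well-defined at each stage of the reduction.
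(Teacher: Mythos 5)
First, note that the paper does not prove this proposition at all: it is imported, with the proof reading ``See either \cite{BumpFG03} or Section 2 of \cite{Ka17}.'' So you are reconstructing an external argument, and while your (a)/(b) outline is in the right general spirit, it has a concrete gap in (b). You only propose to kill orbits \emph{strictly greater} than $\calO_k$ in the dominance order, but the partial order on orthogonal partitions of $2k+1$ is not total, and the proposition asserts that $\calO(\Theta_{2k+1})$ is exactly the singleton $\{\calO_k\}$. For that you must show vanishing of the twisted Jacquet modules at every orbit that is \emph{not} $\leqslant\calO_k$, which includes incomparable orbits: for instance, for $k=2n\geqslant 4$ the orbit $(3\,1^{2k-2})$ is incomparable to $(2^{2n}1)$, and if it supported $\Theta_{2k+1}$ the wave front set would acquire a second maximal element. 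This stronger vanishing is precisely what the present paper uses downstream: in the proofs of Proposition \ref{prop3} and Proposition \ref{prop63}, the terms that get killed are attached to orbits such as $(3^{s_1}1^{\cdots})$, $(4^{2q}3^{r-2q}1^{\cdots})$ and $(5^{q}3^{r-q}1^{\cdots})$, which are in general incomparable to, not larger than, $\calO(\Theta_{2k+1})$. An argument excluding only larger orbits would therefore neither prove the statement nor support its applications.

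Second, your non-vanishing step (a) is not sound as written. The local theta representation is the irreducible image of an intertwining operator at the single degenerate parameter $s_\theta$ (and globally a multi-residue), so a Gindikin--Karpelevich-type evaluation that is ``manifestly non-zero for generic parameters'' gives no information at $s_\theta$, where such factors routinely vanish or must be interpreted through residues; moreover, non-vanishing of a twisted Jacquet functional on the full induced representation does not by itself descend to the irreducible image. The actual route in \cite{BumpFG03} and \cite{Ka17} is to compute Jacquet modules of $\Theta_{2k+1}$ along unipotent radicals of maximal parabolics (the structure recorded in Proposition \ref{prop61}), reduce to exceptional representations of $\wt{\GL}_r$ where the relevant (semi-)Whittaker coefficients are controlled (cf.\ \cite{CAI19}), and deduce simultaneously the vanishing for all orbits not $\leqslant\calO_k$ and the non-vanishing at $\calO_k$; the parity dichotomy between $(2^{2n}1)$ and $(2^{2n}1^3)$ falls out of this bookkeeping rather than requiring separate handling of the $\SO_3$ factor in the Levi. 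Your worry about splittings is minor: Proposition \ref{p21} is a global statement, and locally unipotent subgroups lift canonically to the cover, so the twisted Jacquet modules are unambiguously defined.
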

	\begin{proof}	
		See either \cite{BumpFG03} or Section 2 of \cite{Ka17}.
	\end{proof}

 Let $r$ be an integer such that $1\leqslant r\leqslant k$. Suppose $P_r= (\GL_r\times \SO_{2k-2r+1})U_r$ is a maximal parabolic subgroup of $\SO_{2k+1}$ with the indicated Levi decomposition, where $U_r$ is the unipotent radical. Recall from \cite{KP84} that the two-fold metapletic cover $\wt{\GL}_r(F)$ of the general linear group $\GL_r(F)$ affords a theta representation $\Theta_{\GL_r}$. Note that the two-fold cover $\wt{\GL}_r(F)$ embeds into $\wt{\SO}_{2k+1}(F)$ as the Levi factor of the inverse image of the parabolic subgroup $P_r(F)$. The reader may refer to \cite{KP84} for more details.  We have the following:
\begin{proposition}\label{prop61}
	Let $\Theta_{2k+1}$ be the local theta representation of $\wt{\SO}_{2k+1}(F)$. Considered as a representation of $\wt{\GL}_r\times\wt{\SO}_{2k-2r+1}(F)$, the Jacquet module  of $\Theta_{2k+1}$ with respect to $U_r$ is ismorphic to $\Theta_{\GL_{r}}\otimes \Theta_{2k-2r+1}$. When $r=k$, $\Theta_1$ is the trivial representation.
\end{proposition}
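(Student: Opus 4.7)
The plan is to realize $\Theta_{2k+1}$ as the image of the long intertwining operator on an unramified principal series and then compute $J_{U_r}$ via the Bernstein--Zelevinsky geometric lemma. Following \cite{BumpFG03}, $\Theta_{2k+1}$ is the unique irreducible quotient of $I(\chi) = \Ind_{\wt{B}}^{\wt{\SO}_{2k+1}(F)}(\chi)$ for an exceptional genuine character $\chi$ of the cover of the split torus. Since the Jacquet functor $J_{U_r}$ is exact, a surjection $I(\chi) \twoheadrightarrow \Theta_{2k+1}$ induces a surjection $J_{U_r}(I(\chi)) \twoheadrightarrow J_{U_r}(\Theta_{2k+1})$, so it suffices to analyze the Jacquet module of the principal series and then identify the quotient corresponding to $\Theta_{2k+1}$.

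First I would apply the geometric lemma to produce a filtration of $J_{U_r}(I(\chi))$ as a $\wt{M}_r$-module, where $M_r = \GL_r \times \SO_{2k-2r+1}$ is the Levi factor of $P_r$. The successive quotients are parametrized by minimal-length representatives $w$ of $W_{M_r} \backslash W$ and each takes the form $\Ind_{\wt{M_r \cap wBw^{-1}}}^{\wt{M}_r}(w\chi \cdot \delta^{1/2})$ up to appropriate modular twists. Next I would invoke \hyperref[prop62]{Proposition~\ref*{prop62}}: because $\Theta_{2k+1}$ is small, its Jacquet module $J_{U_r}(\Theta_{2k+1})$ must also be small as a $\wt{M}_r$-module, which rules out all but one of the Bruhat cells in the filtration. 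The surviving cell corresponds to the coset realizing the block-diagonal embedding, and its contribution is a full principal series on $\wt{M}_r$ induced from the restriction of $\chi$ to the block-diagonal torus. By the Kazhdan--Patterson theory \cite{KP84} on the $\wt{\GL}_r$ factor and the analogous construction of \cite{BumpFG03} on the $\wt{\SO}_{2k-2r+1}$ factor, the exceptional nature of $\chi$ guarantees that the restricted character is again exceptional on each factor, and the resulting principal series has unique small quotient equal to $\Theta_{\GL_r} \otimes \Theta_{2k-2r+1}$. The boundary case $r=k$ is immediate since $\SO_1$ is trivial and $\Theta_1$ is declared trivial by convention.

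The main obstacle is careful bookkeeping of the metaplectic cocycle through the geometric filtration: one must verify that the restriction of $\chi$ to the cover of the block-diagonal torus $\wt{T}_{M_r}$ indeed yields exceptional characters on both $\wt{\GL}_r$ and $\wt{\SO}_{2k-2r+1}$ in the sense needed to realize $\Theta_{\GL_r}$ and $\Theta_{2k-2r+1}$, which amounts to reconciling the Kazhdan--Patterson cocycle on $\wt{\GL}_r$ with the restriction of the $4$-fold cocycle on $\wt{\SL}_{2k+1}$ to $\wt{\SO}_{2k+1}$. A secondary technical point is passing from the semisimplified filtration to the Jacquet module itself: the non-surviving Bruhat contributions must be genuinely annihilated in $J_{U_r}(\Theta_{2k+1})$, not merely invisible at the level of semisimplification. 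Handling this requires either an explicit analysis of the kernel of the long intertwining operator via the factorization $w_0 = w_{M_r} \cdot w'$, or an abstract uniqueness argument for small irreducible representations of $\wt{M}_r$ with the prescribed wave front set.
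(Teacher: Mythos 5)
The paper does not actually prove this proposition: its ``proof'' is a citation to Theorem 2.3 of \cite{BumpFG03} (equivalently Proposition 1 of \cite{BumpFG06}), so there is no internal argument to compare yours against. Judged on its own, your outline follows the expected general strategy (principal-series realization, exactness of $J_{U_r}$, geometric lemma, smallness), but it has genuine gaps exactly at the load-bearing steps. First, the geometric lemma filters $J_{U_r}(I(\chi))$, not $J_{U_r}(\Theta_{2k+1})$; exactness only gives a surjection $J_{U_r}(I(\chi))\twoheadrightarrow J_{U_r}(\Theta_{2k+1})$, and the inference ``$\Theta_{2k+1}$ is small, hence $J_{U_r}(\Theta_{2k+1})$ is small, hence all but one Bruhat cell is ruled out'' is not justified. \hyperref[prop62]{Proposition \ref*{prop62}} records the vanishing of twisted Jacquet modules of $\Theta_{2k+1}$ attached to large unipotent orbits of $\SO_{2k+1}$; it does not by itself determine the wave-front data of the individual cell subquotients of $J_{U_r}(I(\chi))$ as $\wt{M}_r$-modules, nor which of them are killed in the passage to the quotient $J_{U_r}(\Theta_{2k+1})$ --- which is precisely the question at issue. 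The mechanism in the cited sources is different: one first pins down the Jacquet module along the full Borel, $J_N(\Theta_{2k+1})$, from the realization of $\Theta_{2k+1}$ as the image of the long intertwining operator (so its exponents are the exceptional ones), and then uses the transitivity $J_{N_{M_r}}\circ J_{U_r}=J_N$ to constrain every irreducible subquotient of $J_{U_r}(\Theta_{2k+1})$; that is where the unwanted contributions die.

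Second, even granting that only the block-diagonal cell survives, your argument yields only that $J_{U_r}(\Theta_{2k+1})$ is a quotient of a principal series of $\wt{M}_r$ whose cosocle is $\Theta_{\GL_r}\otimes\Theta_{2k-2r+1}$; this identifies the theta representation of the Levi as a quotient of the Jacquet module, not the asserted isomorphism. Excluding additional constituents requires the exponent and multiplicity bookkeeping (or an explicit analysis of the kernel of the long intertwining operator through the factorization you mention), and the cocycle comparison needed to see that the restricted exceptional character really induces the Kazhdan--Patterson exceptional character on $\wt{\GL}_r$ and the exceptional character on $\wt{\SO}_{2k-2r+1}$. You flag both items as ``technical points,'' but they constitute the substance of the proof; as written, the proposal is a reasonable road map to the result of \cite{BumpFG03}, not a proof of it.
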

\begin{proof}
	This is Theorem 2.3 of \cite{BumpFG03}, or Proposition 1 of \cite{BumpFG06}.
\end{proof}

\subsection{Global theta representations}
Suppose now $F$ is a number field, with its ring of adeles  $\bbA$. The global theta representation of $\wt{\SO}_{2k+1}(\bbA)$  is given by the residues of an Eisenstein series constructed in \cite{BumpFG03}. In the terminology of \cite{BumpFG03}, let $\chi_s: T(\bbA) \rightarrow \bbC^\times$ be the character on the split torus attached to the complex parameter $s = (s_1,s_2,\cdots, s_k)\in \bbC^k$. Let $\Ind(\chi_{\theta})$ be the representation of $\wt{\SO}_{2k+1}(\bbA)$ parabolically induced from the character $\chi_\theta:  Z(\wt{T}(F))\setminus Z(\wt{T}(\bbA))\rightarrow \bbC^\times$ attached to the complex parameter $s_\theta=(k/2, (k-1)/2, \cdots, 1/2)$. For any $f\in \Ind(\chi_{\theta})$, there is a unique global smooth section sending any $s\in\bbC^k$ to a function $f_s \in \Ind(\chi_{s})$ such that $f_{s_\theta} = f$. Bump-Friedberg-Ginzburg \cite{BumpFG03} define the Eisenstein series associated to this smooth section by
\begin{equation}
E(g,f_s) = \sum_{\gamma\in B(F)\setminus \SO_{2k+1}(F)}f_s(\gamma g),\quad g\in\wt{\SO}_{2k+1}(\bbA). 
\end{equation}
By taking the residue of this Eisenstein series at $s=s_\theta$, they obtain an automorphic form
\[
\theta_f(g) =\res_{s_1=k/2}\cdots\res_{s_{k-1}=1}\res_{s_k=1/2} E(g,f_s).
\]
Then, the global theta representation $\Theta_{2k+1}$ of $\wt{\SO}_{2k+1}(\bbA)$ is the representation on the subspace of $L^2([\wt{\SO}_{2k+1}])$ spanned by all $\theta_f$ with $f\in \Ind(\chi_{\theta})$.

\begin{proposition}\label{prop1}
	Let $\theta$ be a  function in the theta representation $\Theta_{2k+1}$ of $\wt{\SO}_{2k+1}(\bbA)$. Considered as a function of $(g,h)\in\wt{\GL}_r\times \wt{\SO}_{2k-2r+1}$, the integral
\[
\int_{U_r(F)\setminus U_r(\bbA)} \theta(u(g,h))\,du
\]
is in the space of the automorphic representation $\Theta_{\GL_r}\otimes \Theta_{2k-2r+1}$. Here, $\Theta_{\GL_r}$ is the global exceptional representation on the double cover $\wt{\GL}_{r}(\bbA)$ in the sense of \cite{KP84}.
\end{proposition}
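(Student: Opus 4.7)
The plan is to express $\theta$ as an iterated residue of a Borel Eisenstein series on $\wt{\SO}_{2k+1}(\bbA)$, interchange that residue with the constant--term integration along $U_r$, and identify the outcome with the residue that defines the theta representation on the Levi. Concretely, for $\theta = \theta_f$ with $f\in\Ind(\chi_\theta)$, one has $\theta_f(g) = \res_{s=s_\theta} E(g, f_s)$; since integration over the compact quotient $[U_r]$ commutes with meromorphic continuation and with the iterated residue in $s$,
\begin{equation*}
	\int_{[U_r]} \theta_f(u(g,h))\, du \;=\; \res_{s=s_\theta}\; E^{U_r}\!\bigl((g,h),\, f_s\bigr),
\end{equation*}
where $E^{U_r}$ is the constant term of $E$ along $U_r$.

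The next step is to apply Langlands' constant--term formula to expand $E^{U_r}((g,h),f_s)$ as a sum, over a set of coset representatives $w$ of $W_{M_r}\backslash W$, of sections $M(w,s)f_s$ which generate Borel Eisenstein series on the Levi $\wt{M}_r = \wt{\GL}_r \times \wt{\SO}_{2k-2r+1}$. Each intertwining operator $M(w,s)$ factorises through the metaplectic Gindikin--Karpelevich formula into a product of rank--one zeta factors attached to those positive roots that $w^{-1}$ makes negative. An order--of--pole count at $s=s_\theta$, analogous to the one used in \cite{BumpFG03} to construct $\Theta_{2k+1}$ itself, isolates the surviving Weyl elements; for each such $w$ the inducing datum of $M(w,s)f_s$ at $s=s_\theta$ restricts on the centre of $\wt{M}_r$ to $\chi_\theta^{\GL_r}\otimes \chi_\theta^{\SO_{2k-2r+1}}$, which are precisely the characters whose residual Borel Eisenstein series on $\wt{\GL}_r(\bbA)$ and on $\wt{\SO}_{2k-2r+1}(\bbA)$ produce $\Theta_{\GL_r}$ and $\Theta_{2k-2r+1}$. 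Summing the surviving residues therefore gives an automorphic form in the space of $\Theta_{\GL_r}\otimes \Theta_{2k-2r+1}$.

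The main obstacle is this third step: identifying precisely which Weyl representatives contribute, tracking the shifts in the zeta factors coming from the metaplectic cocycle, and ruling out identical vanishing of the sum so that one lands in the theta representation rather than trivially in the zero subspace. As a cross--check, and an alternative route that bypasses the combinatorics, one may invoke the local result \hyperref[prop61]{Proposition \ref*{prop61}}: at every place $v$ the Jacquet module of $\Theta_{2k+1,v}$ along $U_{r,v}$ equals $\Theta_{\GL_r,v}\otimes \Theta_{2k-2r+1,v}$. Since the constant term is automorphic on $\wt{M}_r(\bbA)$ with local components factoring through these local Jacquet modules, and since $\Theta_{\GL_r}\otimes \Theta_{2k-2r+1}$ is the residual automorphic representation of $\wt{M}_r(\bbA)$ uniquely characterised by those local components, the constant term must lie in its space.
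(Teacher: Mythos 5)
There is a genuine gap, and in fact two. Your first route (Borel Eisenstein series, interchange of residue and constant term, Langlands' constant-term formula over $W_{M_r}\backslash W$, Gindikin--Karpelevich pole count) is the natural brute-force attack, but the step you yourself flag as the ``main obstacle'' --- determining which Weyl representatives survive the iterated residue at $s=s_\theta$, tracking the metaplectic shifts, and showing the surviving terms assemble into sections of the Levi theta representations --- is precisely the content of the proposition, and you do not carry it out. The paper avoids this combinatorics entirely by a different realization of $\theta$: using induction in stages it identifies $\Ind_{\wt{B}}^{\wt{\SO}_{2k+1}}(\delta_B^{1/2}\chi_\theta)$ with $\Ind_{\wt{P}_r}^{\wt{\SO}_{2k+1}}(\Theta_{\GL_r}\otimes\Theta_{2k-2r+1})\delta_{P_r}^{s}$ at the relevant point, writes $\theta$ as the residue of the Eisenstein series $E_{P_r}(g,s,f_s)$ attached to $P_r$ with inducing data already equal to $\Theta_{\GL_r}\otimes\Theta_{2k-2r+1}$, and then unfolds the constant term over the double cosets $P_r(F)\backslash\SO_{2k+1}(F)/P_r(F)$; all terms are holomorphic at the residue point except the long Weyl element, whose residue $\Res M_{\omega_0}f_s$ lies in $\Theta_{\GL_r}\otimes\Theta_{2k-2r+1}$ by construction. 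That re-realization is the idea your outline is missing.

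Your fallback via \hyperref[prop61]{Proposition \ref*{prop61}} does not close the gap either. Knowing that every local component of the representation generated by the constant terms factors through $J_{U_{r,v}}(\Theta_{2k+1,v})\cong\Theta_{\GL_r,v}\otimes\Theta_{2k-2r+1,v}$ identifies, at best, the abstract isomorphism class of that representation (when it is nonzero). The proposition, however, asserts that the constant term lies in a specific space of automorphic functions on $\wt{\GL}_r(\bbA)\times\wt{\SO}_{2k-2r+1}(\bbA)$, namely the residual realization $\Theta_{\GL_r}\otimes\Theta_{2k-2r+1}$. To pass from ``same local components'' to ``same space of functions'' you need a multiplicity-one statement for automorphic realizations of this representation on $\wt{M}_r(\bbA)$ (not merely in the discrete spectrum, since the constant term is a priori just an automorphic form). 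Your phrase ``uniquely characterised by those local components'' asserts exactly this, but it is neither proved nor cited in the paper, and it is not an off-the-shelf fact for exceptional representations on covering groups. Without it, the cross-check establishes only the isomorphism class of the constant-term representation, not the conclusion of the proposition.
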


This is the global version of \hyperref[prop61]{Proposition \ref*{prop61}}. The case $r=1$ is proved in \cite{BumpFG03}. Similar statement for the n-fold metaplectic cover $\wt{\Sp}^{(n)}_{2k}$ can be found in \cite{FG16}. The proof is similar to these two sources. For the convenience of the readers, we outline the proof here.
\begin{proof}
	 Consider the exceptional quasicharacter $\chi_\theta: T(\bbA)\rightarrow \bbC^\times$ defined by
	\[
	\chi_\theta(\diag(t_1,t_2,\cdots,t_k,1,t_k,\cdots,t_2,t_1)) =|t_1|^{\frac{k}{2}}|t_2|^{\frac{k-1}{2}}\cdots|t_k|^{\frac{1}{2}}.
	\]
	Denote $\delta_B$ and $\delta_{P_r}$ the modular characters of the standard Borel subgroup of upper triangular matrices and $P_r$ respectively. By induction in stages, we deduce that  $\Ind_{\wt{B}}^{\wt{\SO}_{2k+1}}(\delta_B^{1/2}\chi_\theta)$ is equal to $\Ind_{\wt{P}_r}^{\wt{\SO}_{2k+1}}(\Theta_{\GL_{r}}\otimes \Theta_{2k-2r+1})\delta_{P_r}^{\frac{3n-2r+1}{4n-2r}}$.
	
	 Let $E_{P_r}(g,s, f_s)$ be the Eisenstein series of $\wt{\SO}_{2k+1}(\bbA)$ associated to 
	 \[
	 f_s\in \Ind_{\wt{P}_r}^{\wt{\SO}_{2k+1}}(\Theta_{\GL_{r}}\otimes \Theta_{2k-2r+1})\delta_{P_r}^{s}.
	 \] It follows from \cite{BumpFG03} that any function in the representation $\Theta_{2k+1}$ is the residue of $E_{P_r}(g,s, f_s)$ at the point $s=\frac{3n-2r+1}{4n-2r}$. Consider the constant term
	\begin{equation}\label{p1:1}
	\int_{U_r(F)\setminus U_r(\bbA)} E_{P_r}(u(g,h),s, f_s)\,du.
	\end{equation}
	For $\Re(s)$ large enough we can unfold the Eisenstein series and obtain that (\ref{p1:1}) equals
	\begin{equation}
	\sum_{\omega\in P_r(F)\setminus \SO_{2k+1}(F)/P_r(F)}\int_{U_r^\omega(F)\setminus U_r(\bbA)}\sum_{\gamma\in (\omega^{-1}P_r(F)\omega \cap P_r(F))\setminus P_r(F)}f_s(\omega^{-1} \gamma u(g,h))\,du,
	\end{equation}
	where $U_r^\omega = \omega^{-1}U_r\omega \cap \omega_0^{-1}U_r\omega_0$ with $\omega_0$ being the long Weyl group element. 
	
	The double cosets  $P_r(F)\setminus \SO_{2k+1}(F)/P_r(F)$ may be represented by Weyl group elements. By Section II.1.7 of \cite{MW95}, any of the inner summations is either an Eisenstein series or a product of such series. All such Eisenstein series are holomorphic at $s= \frac{3n-2r+1}{4n-2r}$ except the one corresponding to the long Weyl group element. Following the same argument as in Proposition 1 of \cite{FG16}, by taking the residue of (\ref{p1:1}) at $s=\frac{3n-2r+1}{4n-2r}$, we obtain
	\begin{equation}
\int_{U_r(F)\setminus U_r(\bbA)} \theta(u(g,h))\,du=\Res_{s=\frac{3n-2r+1}{4n-2r}}M_{\omega_0}f_s(g,h),
	\end{equation}
	where
\begin{equation}
	M_{\omega_0}f_s(g,h) = \int_{\omega_0^{-1}U_r(F)\omega_0\setminus U_r(\bbA)}f_s(\omega_0^{-1}u(g,h))\,du.
\end{equation}
As $\Res_{s=\frac{3n-2r+1}{4n-2r}}M_{\omega_0}f_s(g,h)$ is an element of $\Theta_{\GL_r}\otimes \Theta_{2k-2r+1}$, the proof is complete.
	\end{proof}
The global theta representation has the same smallness property given by \ref{orbit}. We have the following statement:
\begin{proposition}\label{prop2} Let $\Theta_{2k+1}$ be the global theta representation of the double cover $\wt{\SO}_{2k+1}(\bbA)$. Let $n$ be a positive integer. Then
	\begin{equation}\label{orbit}
		\calO(\Theta_{2k+1})=\begin{cases}
			(2^{2n}1) &\text{if}\; k=2n,\\
			(2^{2n}1^3) &\text{if}\;k =2n+1.
		\end{cases}
	\end{equation}
\end{proposition}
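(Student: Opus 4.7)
The plan is to establish the equality $\calO(\Theta_{2k+1})=\calO_k$, where $\calO_k=(2^{2n}1)$ if $k=2n$ and $\calO_k=(2^{2n}1^3)$ if $k=2n+1$, by proving both an upper bound (no strictly larger orbit supports $\Theta_{2k+1}$) and a lower bound ($\calO_k$ itself does support $\Theta_{2k+1}$).

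\textbf{Upper bound.} I would exploit a local-to-global principle. The global representation decomposes as a restricted tensor product $\Theta_{2k+1}\cong \bigotimes'_v \Theta_{2k+1,v}$, and for any generic character $\psi_\calO$ the assignment $\theta\mapsto F_{\psi_\calO}(\theta)(e)$ defines a $(V_{2,\calO}(\bbA),\psi_\calO)$-equivariant linear functional on the space of $\Theta_{2k+1}$. By Flath's theorem, any such functional factors as a tensor product of local $(V_{2,\calO}(F_v),\psi_{\calO,v})$-equivariant functionals on each $\Theta_{2k+1,v}$. Non-vanishing of the global Fourier coefficient therefore forces the local twisted Jacquet module $J_{V_{2,\calO}(F_v),\psi_{\calO,v}}(\Theta_{2k+1,v})$ to be non-zero at every non-archimedean place $v$. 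Invoking \hyperref[prop62]{Proposition \ref*{prop62}} at any such place, this forces $\calO\leqslant \calO_k$.

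\textbf{Lower bound.} I would proceed by induction on $k$ using the constant-term identity of \hyperref[prop1]{Proposition \ref*{prop1}}. The base cases $k=1,2,3$ can be verified directly from the explicit Eisenstein-series construction (where $\Theta_{2k+1}$ is minimal or of very small rank). For the induction step, I would choose $r$ appropriate to the structure of $\calO_k$, apply \hyperref[prop1]{Proposition \ref*{prop1}} along the unipotent radical $U_r$ of the maximal parabolic $P_r=(\GL_r\times \SO_{2k-2r+1})U_r$, and then carry out a root exchange converting the $\calO_k$-Fourier integral of $\theta\in\Theta_{2k+1}$ into a constant term along $U_r$ followed by a product of a Whittaker-type integral on $\Theta_{\GL_r}$ and a Fourier integral on $\Theta_{2k-2r+1}$ associated with a smaller orbit. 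The factor on $\Theta_{\GL_r}$ is non-zero by the Whittaker-model results of \cite{KP84}, and the factor on $\Theta_{2k-2r+1}$ is non-zero by the induction hypothesis.

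\textbf{Main obstacle.} The upper bound is essentially automatic given \hyperref[prop62]{Proposition \ref*{prop62}}. The difficulty lies in executing the lower bound, specifically in choosing the right $r$ and performing the root exchange so that the $\calO_k$-Fourier coefficient reassembles into the product form produced by \hyperref[prop1]{Proposition \ref*{prop1}}. Because $\calO_k$ changes shape between even and odd $k$ (jumping from $(2^{2n}1)$ to $(2^{2n}1^3)$ and back), a uniform induction with $r=1$ does not cleanly preserve the partition pattern, so one must likely pair up steps $k\to k-2$ or use different values of $r$ in the two parities. A secondary subtlety is to verify that the abelian quotient $V_{2,\calO_k}/V_{2,\calO_k}^{(1)}$ and its character can be decomposed in a way compatible with the Levi $\GL_r\times \SO_{2k-2r+1}$, so that the root exchange closes and identifies with the $\calO_{k-r}$-Fourier coefficient of the smaller theta representation.
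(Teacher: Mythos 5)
The paper does not prove this proposition at all: it simply cites Theorem 4.2(i) of \cite{BumpFG03} (equivalently Proposition 2 of \cite{BumpFG06}), so any self-contained argument is necessarily a different route. Your upper bound is fine as far as it goes: factoring the global Fourier functional into local $(V_{2,\calO}(F_v),\psi_{\calO,v})$-equivariant functionals and invoking \hyperref[prop62]{Proposition \ref*{prop62}} at a single non-archimedean place does show that no orbit outside the closure of $\calO_k$ supports $\Theta_{2k+1}$. But note this only outsources the content to the local statement, which in this paper is itself a citation to \cite{BumpFG03}.

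The genuine gap is in the lower bound. First, the $\calO_k$-coefficient for $\calO_k=(2^{2n}1)$ is an integral over $V_{2,\calO_k}=Z(U_{2n})$ (the center of the Siegel radical) against a \emph{nondegenerate} character; root exchange can trade abelian integration domains between conjugate unipotent subgroups, but it cannot convert an integral against a nondegenerate character on $Z(U_{2n})$ into a constant term along some $U_r$ followed by coefficients on the Levi. So the claimed identity ``$\calO_k$-coefficient $=$ (constant term) composed with (Whittaker on $\Theta_{\GL_r}$) $\otimes$ ($\calO_{k-r}$-coefficient on $\Theta_{2k-2r+1}$)'' is not established and is not plausible as stated. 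Second, your non-vanishing input for the $\GL$-factor is wrong in the relevant range: by \cite{KP84} and Proposition 4.1 of \cite{CAI19}, the theta representation $\Theta_{\GL_r}$ of $\wt{\GL}_r$ has \emph{vanishing} Whittaker coefficients for $r\geqslant 3$ (indeed the paper's own proof of \hyperref[prop3]{Proposition \ref*{prop3}} relies on exactly this vanishing), so the factor you want to be non-zero is zero unless $r\leqslant 2$, and you have not shown that an induction restricted to $r\leqslant 2$ closes. Finally, the standard mechanism for non-vanishing runs in the opposite direction from your sketch: one starts from the non-vanishing of $\theta$ itself (or of a known coefficient such as the constant term computed by \hyperref[prop1]{Proposition \ref*{prop1}}), performs a full Fourier expansion, and shows that all competing terms vanish, forcing the $\calO_k$-term to survive. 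Your proposal names this difficulty honestly in the ``main obstacle'' paragraph but does not resolve it, so the lower bound remains unproved.
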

\begin{proof}
	See Theorem 4.2(i) of \cite{BumpFG03}, or Proposition 2 of \cite{BumpFG06}.
	\end{proof}
There is another important property we need for the theta representations $\Theta_{2k+1}$. If $r<k/4$ is a positive integer, we denote
 $H_{r,2k+1}$ the unipotent radical of the maximal parabolic subgroup of $\SO_{2k+1}$ whose Levi part is $\GL_r\times \SO_{2k-2r+1}$. In other words, $H_{r,2k+1}$ consists of upper triangular matrices of the form
\[
\left\{\begin{pmatrix}
I_r&x&\ast\\
&I_{2k-2r+1}&x^\ast\\
&&I_r
\end{pmatrix}\in \SO_{2k+1}: x^\ast = -J_{2k-2r+1}x^TJ_r \right\},
\]
where $\ast$ denotes whatever is needed for the matrix to be orthogonal. Similarly, let $H_{r,2k-2r+1}$ be the unipotent of the standard maximal parabolic subgroup of $\SO_{2k-2r+1}$ with Levi subgroup $\GL_{r}\times\SO_{2k-4r+1}$. Via the embedding (\ref{ebd}), we identify $H_{r,2k-2r+1}$ with its image in $\SO_{2k+1}$. Hence, $H_{r,2k-2r+1}$ consists of matrices of the form
\[\left\{
\begin{pmatrix}
	I_r&&&&\\
	&I_r&y&\ast&\\
	&&I_{2k-4r+1}&y^\ast&\\
	&&&I_r&\\
	&&&&I_r
\end{pmatrix}\in \SO_{2k+1}(\bbA) \right\}.
\]

For any $u = (u_{i,j})\in H_{r,2k+1}(\bbA)$, we define the character $\psi_1: H_{r,2k+1}(F)\setminus H_{r,2k+1}(\bbA)\rightarrow \bbC^\times $ by
\[
\psi_1(u) =\psi(\sum_{j=1}^r u_{j,j+r}).
\]
\begin{proposition}\label{prop3}
Fix a function $\theta \in \Theta_{2k+1}$.  The integral
		\begin{equation}\label{peq:01}
		f(g) =\int_{ H_{r,2k+1}(F)\setminus H_{r,2k+1}(\bbA)}\theta(ug)\psi_1(u) \,du
		\end{equation}
	is left invariant by $H_{r,2k-2r+1}(\bbA)$. That is, $f(g) = f(vg)$ for any $ v\in H_{r,2k-2r+1}(\bbA)$.
\end{proposition}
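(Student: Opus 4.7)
The plan is to prove the invariance of $f(g)$ under left multiplication by $V(\bbA)$, where $V := H_{r,2k-2r+1}$, by Fourier-expanding $v \mapsto f(vg)$ on $V(F) \setminus V(\bbA)$ and then using the smallness of $\Theta_{2k+1}$ from \hyperref[prop2]{Proposition \ref*{prop2}} to kill all non-trivial Fourier modes. Once that is achieved, $v \mapsto f(vg)$ is a constant function, and its value at $v = e$ gives $f(vg) = f(g)$ for all $v \in V(\bbA)$.

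First, I would verify by a direct matrix computation two structural facts that enable the Fourier analysis. Since $V$ sits inside the Levi $\GL_r \times \SO_{2k-2r+1}$ of the parabolic with unipotent radical $U := H_{r,2k+1}$, the group $V$ normalizes $U$. Writing $u \in U$ with upper-right block $x$ of size $r \times (2k-2r+1)$, a short calculation shows that conjugation by $v \in V$ affects only the ``lower'' columns of $x$ while leaving the leading $r \times r$ sub-block (whose trace $\psi_1$ picks out) unchanged; thus $\psi_1$ is $V$-invariant. Together with the $\SO_{2k+1}(F)$-invariance of $\theta$ and a change of variables on $[U]$, this shows that $F_g(v) := f(vg)$ is $V(F)$-invariant, so Fourier expansion on $V(F) \setminus V(\bbA)$ is legitimate.

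Since $V$ is two-step nilpotent, I would expand $F_g$ iteratively, first along characters of the center $Z := [V, V]$ and then along characters of the abelianization $V/Z$. Each non-trivial Fourier mode, once combined with the defining integral for $f$, takes the form
\[
\int_{[UV']} \theta(wg)\, \chi(w)\, dw,
\]
where $V' \subseteq V$ is a unipotent subgroup arising in the central series of $V$ and $\chi$ is a character on $UV'$ extending $\psi_1$ on $U$ by a non-trivial character on $V'$. Using the dictionary between Fourier coefficients on unipotent subgroups and unipotent orbits from \hyperref[sec3]{Section \ref*{sec3}}, combined with Ginzburg--Rallis--Soudry-style root exchange to put $\chi$ into a standard form, one compares each such integral to a Fourier coefficient of $\theta$ against a unipotent orbit of $\SO_{2k+1}$. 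The hypothesis $r < k/4$ guarantees that the middle block $\SO_{2k-4r+1}$ is large enough for the comparison to produce an orbit strictly larger than $\calO(\Theta_{2k+1})$. By \hyperref[prop2]{Proposition \ref*{prop2}}, Fourier coefficients of $\theta$ against such larger orbits vanish, and the claim follows.

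The main technical obstacle is the orbit-comparison step. One must classify, up to $M(\calO)$-conjugation, how a non-trivial character on $V'$ combines with $\psi_1$, and handle both the ``isotropic'' and ``non-isotropic'' choices of such extension. The isotropic choices may a priori yield orbits that are not obviously larger than $\calO(\Theta_{2k+1})=(2^{2n}1)$ or $(2^{2n}1^3)$ by naive rank-counting; here a careful analysis, exploiting the hypothesis $r < k/4$, is needed to confirm that in every case the resulting Fourier coefficient is supported on an orbit beyond the wave front set.
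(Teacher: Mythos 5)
Your overall skeleton (Fourier expansion of $v\mapsto f(vg)$ along $[H_{r,2k-2r+1}]$, first over the center and then over the abelianization, killing non-trivial modes by the smallness of $\Theta_{2k+1}$) matches the paper, and your treatment of the non-trivial central characters and of the non-isotropic characters of the abelianization is essentially the paper's: in those cases $\psi_1$ combined with the extra character is a genuine generic character for an orbit such as $(4^{2q}3^{r-2q}1^{\ast})$ or $(5^{q}3^{r-q}1^{\ast})$, which is not dominated by $\calO(\Theta_{2k+1})$, so \hyperref[prop2]{Proposition \ref*{prop2}} applies. The genuine gap is exactly at the point you flag as "the main technical obstacle" and then wave away: the non-trivial characters of $H_{r,2k-2r+1}/Z(H_{r,2k-2r+1})$ with \emph{totally isotropic} row space. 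For these, the combined character on $H_{r,2k+1}H_{r,2k-2r+1}$ is \emph{not} a generic character attached to any orbit lying beyond the wave front set, and no amount of root exchange or use of $r<k/4$ turns it into one; your assertion that "in every case the resulting Fourier coefficient is supported on an orbit beyond the wave front set" is precisely the claim that fails, so \hyperref[prop2]{Proposition \ref*{prop2}} alone cannot close the argument.

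The paper handles these isotropic modes by a different mechanism: it expands further along $Z(H_{r,2k-4r+1})$ and $H_{r,2k-4r+1}/Z(H_{r,2k-4r+1})$ (and iterates), and at each stage the surviving "constant-term-like" contribution is an integral over the unipotent radical of a parabolic with Levi $\GL_{3r}\times\SO_{2k-6r+1}$ (then $\GL_{nr}\times\SO_{\ast}$ with $n>3$). By \hyperref[prop1]{Proposition \ref*{prop1}} this factors through $\Theta_{\GL_{3r}}\otimes\Theta_{2k-6r+1}$, and the residual character restricted to the $\wt{\GL}_{3r}$-factor is a semi-Whittaker coefficient for the partition $(3^{q}2^{r-q}1^{r-q})$, which vanishes by Proposition 4.1 of \cite{CAI19} because the partition contains a part $\geqslant 3$. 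This vanishing comes from the structure of the theta representation of the metaplectic cover of $\GL$, not from the orthogonal wave front set, and it is the missing ingredient in your proposal. To repair your argument you would need to import both the constant-term identification of \hyperref[prop1]{Proposition \ref*{prop1}} and the semi-Whittaker vanishing for $\Theta_{\GL}$, and run the iterated expansion, rather than relying on orbit comparison with $\calO(\Theta_{2k+1})$ throughout.
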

\begin{proof}
      Via the embedding (\ref{ebd}), the center $Z(H_{r,2k-2r+1})$ consists of matrices of the form
	\[\left\{
	\begin{pmatrix}
	I_r&&&&\\
	&I_r&&z&\\
	&&I_{2k-4r+1}&&\\
	&&&I_r&\\
	&&&&I_r
	\end{pmatrix}:\quad z\in \Mat_{r\times r},\quad  z^TJ_r+J_rz=0\right\}.
	\]
	We first expand (\ref{peq:01}) against $Z(R_{r,2k-2r+1})(F)\setminus Z(R_{r,2k-2r+1})(\bbA)$.  Embed the group $\GL_r(F)$  into $\SO_{2k+1}(F)$ via
	\[
	h\hookrightarrow \diag(I_r,h,I_{2k-4r+1},h^\ast, I_r), \quad \forall h\in\GL_r(F).
	\]
	The $\GL_r(F)$-action on $Z(H_{r,2k-2r+1})(F)\setminus Z(H_{r,2k-2r+1})(\bbA)$ induces an action on its character group, which may be identified with $Z(H_{r,2k-2r+1})(F)$. This action preserves the rank of the matrices in $Z(H_{r,2k-2r+1})(F)\cong \left\{z\in \Mat_{r\times r},\, z^TJ_r+J_rz=0\right\}$. On the other hand, the rank of any $z\in Z(H_{r,2k-2r+1})(F)$ must be even. Suppose $\Rank(z) = 2q$, where $0<2q\leqslant r$. We may choose a representative 
	\begin{equation}\label{zr}
	\begin{pmatrix}
	0&z_q\\
	0&0
	\end{pmatrix}\in\Mat_{r\times r}(F),\end{equation}
where
\[
z_q=\diag(\lambda_1,\lambda_2,\cdots,\lambda_q,-\lambda_q,\cdots,-\lambda_2,-\lambda_1)\in \Mat_{2q\times 2q}(F), \lambda_i \in F^\times \quad \forall i =1,2,\cdots,q.
\]
This gives the corresponding character on $[Z(H_{r,2k-2r+1})]$ by:
	\[
	\psi_{Z,q}(v) = \psi(\lambda_1v_{1,r-2q+1}+\lambda_2v_{2,r-2q+2}+\cdots +\lambda_qv_{q,r-q}),
	\]
	where 
	\[
	v\in Z(H_{s,2k-2s+1})(\bbA)\cong\Mat_{r\times r}(\bbA).
	\]
  For any character in the same orbit as $\psi_{Z,q} $, its contribution in the expansion is given by the integral
  	\begin{equation}\label{peq:02}
  \int_{ [H_{r,2k+1}]}\int_{[Z(H_{r,2k-2r+1})]} \theta(uvg)\psi_1(u) \psi_{Z,q}(v)\,dvdu.
  \end{equation}
 Notice that the character $\psi_1\psi_{Z,q}$ coincides with a generic character associated to the unipotent orbit $\calO_q=(4^{2q}3^{r-2q}1^{2k-2q-3r+1})$ (or $(4^r1^{2k-4r+1})$ in the case $r=2q$). Moreover, we have $V_{2,\calO_q} \subseteq  H_{r,2k+1}Z(H_{r,2k-2r+1})$. As a result, the integral (\ref{peq:02}) contains an inner integral that is a Fourier coefficient of $\theta$ with respect to the unipotent orbit $\calO_q$. The integral (\ref{peq:02}) is then zero because any such Fourier coefficient of $\theta$ is zero by \hyperref[prop2]{Proposition \ref*{prop2}}. Thus, only the integral corresponding to the trivial character contributes, and (\ref*{peq:01}) is equal to
 	\begin{equation}\label{peq:03}
 f(g) =\int_{[H_{r,2k+1}Z(H_{r,2k-2r+1})]}\theta(ug)\psi_1(u) \,du.
 \end{equation}
 
 As the center $Z(H_{r,2k-2r+1})$ is now contained in the domain of integration, we can further expand (\ref{peq:03}) against \[[H_{r,2k-2r+1}/Z(H_{r,2k-2r+1})]\cong \Mat_{r\times (2k-4r+1)}(F)\setminus \Mat_{r\times (2k-4r+1)}(\bbA).\]
 Embed the group $\GL_r(F)\times \SO_{2k-4r+1}(F)$ into $\SO_{2k+1}(F)$ via
 \[
 (h_1,h_2)\hookrightarrow \diag(I_r,h_1,h_2,h_1^\ast,I_r),\quad (h_1,h_2)\in \GL_r(F) \times \SO_{2k-4r+1}(F).
 \]
 The conjugation action of $\GL_r(F)\times \SO_{2k-4r+1}(F)$ on the quotient $[H_{r,2k-2r+1}/Z(H_{r,2k-2r+1})]$ induces an action on the character group of the latter, which may be  identified with $\Mat_{r\times (2k-4r+1)}(F)$. For any $\xi\in\Mat_{r\times (2k-4r+1)}(F)$ with $\Rank(\xi) =q,\, 1\leqslant q\leqslant r$, if any of its row vectors is non-isotropic with respect to the bilinear form on $F^{2k-4r+1}$ given by
 \[
 (x,y)\mapsto xJ_{2k-4r+1}y^T,
 \]
 then the corresponding contribution is given by the integral 
  \begin{equation}\label{eq11}
 	\int_{ [H_{r,2k+1}]}\int_{[H_{r,2k-2r+1}]} \theta(uvg)\psi_1(u) \psi_{\xi}(v)\,dvdu.
 \end{equation}
 The product of the characters $\psi_1$ and $\psi_{\xi}$ is a generic character associated to the  unipotent orbit $(5^{q}3^{r-q}1^{2k-3r-2q+1})$. Thus, the integral (\ref{eq11}) contains a Fourier coefficient of $\Theta_{2k+1}$ associated to the unipotent orbit $(5^{q}3^{r-q}1^{2k-3r-2q+1})$, which is zero by \hyperref[prop2]{Proposition \ref*{prop2}}. Under the conjugation action by $\GL_{r}(F)\times \SO_{2k-4r+1}(F)$, any $\xi\in \Mat_{r\times (2k-4r+1)}(F)$ whose row space is not totally isotropic lies in the same orbit as the non-isotropic ones with the same rank. The same argument shows that the corresponding contribution is zero. Therefore, the only possible non-zero contributions are from those $\xi\in \Mat_{r\times (2k-4r+1)}(F)$ whose row space is totally isotropic. We may pick the representatives of these orbits to be
 \[
 z_q=\begin{pmatrix}
 I_q&0\\
 0&0
 \end{pmatrix} \in \Mat_{r\times (2k-4r+1)}(F), q=0,1,\cdots, r.
 \]
 
 For any $z_q\in \Mat_{r\times (2k-4r+1)}(F)$, the correponding contribution in the expansion is 
 \begin{equation}\label{peq:04}
 \int_{ [H_{r,2k+1}]}\int_{[H_{r,2k-2r+1}]} \theta(uvzg)\psi_1(u) \psi_{2,q}(v)\,dvdu,
 \end{equation}
 where the character $\psi_{2,q}$ corresponding to $z_q$  is the trivial character when $q=0$ and is given by
 \[
 \psi_{2,q}(u) =\psi(u_{1,1}+u_{2,2}+\cdots+u_{q,q}),\quad \forall u\in  H_{r,2k-2r+1}/Z(H_{r,2k-2r+1})(\bbA) \cong \Mat_{r\times (2k-4r+1)}(\bbA)
 \]
 when $q>0$.  We claim that the only non-zero contribution is when $q=0$. Suppose on the contrary that $q>0$. We can further expand (\ref{peq:04}) against $[Z(H_{r,2k-4r+1})]$ and follow by another expansion against the abelian quotient $[H_{r,2k-4r+1}/Z(H_{r,2k-4r+1})]$. Here, $H_{r,2k-4r+1}$ is similarly defined as the unipotent subgroup of $\SO_{2k+1}$ consisting of matrices of the form
	\[\left\{
\begin{pmatrix}
I_{2r}&&&&\\
&I_r&y&\ast&\\
&&I_{2k-6r+1}&y^\ast&\\
&&&I_r&\\
&&&&I_{2r}
\end{pmatrix},\quad y\in \Mat_{r\times (2k-6r+1)}\right\}.
\]
For any character $\psi^\ast$ on $[H_{r,2k-4r+1}/Z(H_{r,2k-4r+1})]$, the corresponding contribution is 
 \begin{equation}\label{peq:05}
\int_{ [H_{r,2k+1}]}\int_{[H_{r,2k-2r+1}]} \int_{[H_{r,2k-4r+1}]}\theta(uvwg)\psi_1(u) \psi_{2,q}(v)\psi^\ast (w)\,dwdvdu.
\end{equation}
Any character $\psi^\ast$ corresponding to a matrix in $\Mat_{r\times (2k-6r+1)}(F)$ that contains non-isotropic row vectors in $F^{2k-6r+1}$ again contributes zero. 

We claim that the constant term is also zero. If we denote 
\[H_r=H_{r,2k+1}H_{r,2k-2r+1}H_{r,2k-4r+1},\]
then the constant term is of the form 
 \begin{equation}\label{peq:06}
\int_{ [H_r]} \theta(ug)\psi_{1,q}(u)\,du.
\end{equation}
Here, if $u=u_1u_2u_3$ with $u_1\in H_{r,2k+1}(\bbA), u_2\in H_{r,2k-2r+1}(\bbA)$ and $u_3\in H_{r,2k-4r+1}(\bbA)$, then
\[
\psi_{1,q}(u) =\psi_1(u_1)\psi_{2,q}(u_2).
\]
This integral is a Fourier coefficient of the constant term of $\Theta_{2k+1}$ with respect to the parabolic subgroup whose Levi part is $\GL_{3r}\times \SO_{2k-6r+1}$. By \hyperref[prop1]{Proposition \ref*{prop1}}, we may regard this integral as a function in the representation $\Theta_{\GL_{3r}}\otimes \Theta_{2k-6r+1}$ of  $\wt{\GL}_{3r}(F)\times \wt{\SO}_{2k-6r+1}(F)$. According to  \cite{CAI19}, the corresponding Fourier coefficient of $\Theta_{\GL_{3r}}$ is the semi-Whittaker coefficient associated with the partition $\Lambda =(3^q2^{r-q}1^{r-q})$. Following \cite{CAI19}, let $P_\Lambda =M_\Lambda U_\Lambda$ be the standard parabolic subgroup of $\GL_{3r}$ with the Levi subgroup $M_\Lambda \cong \GL^{q}_3\times \GL_2^{r-q}\times \GL_1^{r-qr}$ and $U_\Lambda$ the unipotent radical. Let $U$ be the unipotent radical of the standard Borel subgroup of $\GL_{3r}$ and $\psi_\Lambda : U(F)\setminus U(\bbA)\rightarrow \bbC^\times$ be the character such that it acts non-trivially as $\psi$ on the one-parameter subgroup corresponding to simple positive root contained in $M_\Lambda$ and act trivially otherwise. Then  the $\Lambda$-semi-Whittaker coefficient of any function $\theta_{\GL_{3r}}$ in $\Theta_{\GL_{3r}}$ is given by
\begin{equation}
\int_{U(F)\setminus U(\bbA)}\theta_{\GL_{3r}}(ug)\psi_\Lambda(u)\,du.
\end{equation}
 By Proposition 4.1 of \cite{CAI19}, any such semi-Whittaker coefficient is zero. 
 
 Therefore, we only need to consider those terms corresponding to characters on $[H_{r,2k-4r+1}]$ represented by 
\begin{equation}\label{eq12}
\psi_{3,q'}(u) = \psi(y_{1,1}+y_{2,2}+\cdots+y_{q',q'}),
\end{equation}
with $1<q'\leqslant r$ and
\begin{equation}
u=\begin{pmatrix}
	I_{2r}&&&&\\
	&I_r&y&\ast&\\
	&&I_{2k-6r+1}&y^\ast&\\
	&&&I_r&\\
	&&&&I_{2r}
\end{pmatrix}\in H_{r,2k-4r+1}(\bbA), y =(y_{i,j}) \in \Mat_{r\times (2k-6r+1)}(\bbA).
\end{equation}

We continue this argument by further expanding the integral (\ref{eq12}) against relevant unipotent subgroups. For each step, we either obtain a Fourier coefficient of $\Theta_{2k+1}$ associated to unipotent orbit that is incomparable to $\calO(\Theta_{2k+1})$, or we obtain some semi-Whittaker coefficient  on the double cover $\wt{\GL}_{nr}$ corresponding to a partition of the integer $nr$ (with $n>3$) that contains an integer greater than $2$. The former is zero by \hyperref[prop2]{Proposition \ref*{prop2}}, while the latter is also zero by Proposition 4.1 of \cite{CAI19}. This shows that the only contribution of the expansion of (\ref{peq:03}) against  $[H_{r,2k-2r+1}/Z(H_{r,2k-2r+1})]$ is the constant term, which completes the proof.
 \end{proof}
For intergers $r_1\leqslant r_2\leqslant\cdots\leqslant r_n$, we further denote $H(r_1,r_2,\cdots,r_{n-1};r_n)$ the unipotent radical of the parabolic subgroup of $\SO_{2k+1}$ with Levi subgroup
\[L(r_1,r_2,\cdots,r_{n-1};r_n)=\GL^2_{r_1}\times \GL^2_{r_2}\times \cdots\times \GL^2_{r_{n-1}}\times\GL_{r_n}\times \SO_{2k-4(r_1+\cdots+r_{n-1})-2r_n+1}.\]
We also let
\[
L=\GL_{2r_1}\times \GL_{2r_2}\times \cdots\times \GL_{2r_{n-1}}\times\SO_{2k-4(r_1+\cdots+r_{n-1})+1}.
\]
Denote the diagonal embedding $\iota^\ast : L(r_1,r_2,\cdots,r_{n-1};r_n) \subset L \hookrightarrow \SO_{2k+1}$.

For each $i=1,2,\cdots, n-1$, let $H_{r_i}$ be the unipotent radical of the standard Siegel parabolic subgroup of  $\GL_{2r_i}$ whose Levi part is $\GL_{r_i}\times \GL_{r_i}$. Also, let  $H_{r_n}$ be the unipotent radical of the maximal parabolic subgroup of $\SO_{2k-4(r_1+\cdots+r_{n-1})+1}$ with Levi subgroup $\GL_{r_n}\times \SO_{2k-4(r_1+\cdots+r_{n-1})-2r_n+1}$. Then we define 
\[
H^0 :=H_{r_1}\times H_{r_2}\times \cdots\times H_{r_{n-1}}\times H_{r_n}\subseteq H(r_1,r_2,\cdots,r_{n-1}; r_n).
\]

We will define a character on $[H^0]$ and then extend it trivially to $[H(r_1,r_2,\cdots,r_{n-1};r_n)]$. For any $u_i\in H_{r_i}(\bbA)$ with $i=1,2,\cdots, n-1$, we may write 
\[
u_i =\ \begin{pmatrix}
I_{r_i}&x_i\\
&I_{r_i}
\end{pmatrix}, \quad x_i\in \Mat_{r_i\times r_i}(\bbA).
\]
Define $\psi_{r_i}: H_{r_i}(F)\setminus H_{r_i}(\bbA)\rightarrow \bbC^\times$ by
\[
\psi_{r_i} (u_i) = \psi(\Tr(x_i)).
\]
For the factor $H_{r_n}$, define the character by
\[
\psi_{r_n}(u) =\psi(\sum_{j=1}^{r_n} u_{j,j+r_n}), \quad u\in H_{r_n}(\bbA).
\]
Pulling back each $\psi_{r_i}$ via the projection of $H^0$ onto the corresponding factor and taking the product afterwards, we obtain $\psi_n := \prod_{i=1}^n\psi_{r_i}: [H^0]\rightarrow \bbC^\times$. Extend $\psi_n$ trivially to $[H(r_1,r_2,\cdots,r_{n-1};r_n)]$. Furthermore, let $H_{n}$ be the unipotent radical of the standard maximal parabolic subgroup of $\SO_{2k-4(r_1+\cdots+r_{n-1})-2r_n+1}$ with Levi subgroup given by  
\[
\GL_{r_n}\times \SO_{2k-4(r_1+\cdots+r_{n-1})-2r_n+1}.
\] 
Embed $H_{n}$ into $\SO_{2k+1}$ via (\ref{ebd}) and still denote its image by $H_n$. \hyperref[prop3]{Proposition \ref*{prop3}} admits a straightforward corollary.
\begin{corollary}\label{cor}
	The function
	\begin{equation}\label{peq:07}
	f(g) =\int_{ [H(r_1,r_2,\cdots,r_{n-1};r_n)]}\theta(ug)\psi_n(u) \,du
	\end{equation}
	
	is left invariant under $H_{n}(\bbA)$.
\end{corollary}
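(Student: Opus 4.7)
The plan is to reduce \hyperref[cor]{Corollary \ref*{cor}} to \hyperref[prop3]{Proposition \ref*{prop3}} by factoring the integral through the constant term along a larger parabolic. First I would decompose $H(r_1,\dots,r_{n-1};r_n) = U_L \cdot H^0$, where $U_L$ is the unipotent radical of the parabolic of $\SO_{2k+1}$ with Levi $L = \GL_{2r_1}\times\cdots\times\GL_{2r_{n-1}}\times \SO_{2k-4(r_1+\cdots+r_{n-1})+1}$. This factorization is valid because the parabolic with Levi $L(r_1,\dots,r_{n-1};r_n)$ is contained in the parabolic with Levi $L$, and under this containment $H^0$ is precisely the unipotent radical of the induced parabolic of $L$ with Levi $L(r_1,\dots,r_{n-1};r_n)$. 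Crucially, $\psi_n$ is trivial on $U_L$.

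Next I would rewrite (\ref{peq:07}) as an iterated integral:
\begin{equation}
f(g) = \int_{[H^0]} \Big(\int_{[U_L]} \theta(u_L u_0 g)\, du_L\Big)\, \psi_n(u_0)\, du_0.
\end{equation}
The inner integral is the constant term of $\theta$ along the parabolic with Levi $L$. By iterating \hyperref[prop1]{Proposition \ref*{prop1}}, this constant term is a function in the representation $\Theta_{\GL_{2r_1}}\otimes\cdots\otimes \Theta_{\GL_{2r_{n-1}}}\otimes \Theta_{2k-4(r_1+\cdots+r_{n-1})+1}$ of the metaplectic cover of $L$. Call this function $\Theta^L$, viewed as a function on $L(\bbA)$ (or rather on its cover).

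Then I would observe that $H^0 = H_{r_1}\times\cdots\times H_{r_{n-1}}\times H_{r_n}$ splits along the block structure of $L$, with the $H_{r_i}$ sitting inside $\GL_{2r_i}$ (for $i<n$) and $H_{r_n}$ sitting inside $\SO_{2k-4(r_1+\cdots+r_{n-1})+1}$. The character $\psi_n$ factors accordingly. The integral of $\Theta^L$ against $\prod_{i<n} \psi_{r_i}$ over $\prod_{i<n}[H_{r_i}]$ produces a function on the $\wt{\GL}_{2r_i}$-variables, while the remaining integral against $\psi_{r_n}$ over $[H_{r_n}]$ is exactly the integral in \hyperref[prop3]{Proposition \ref*{prop3}} applied to the theta function on $\wt{\SO}_{2k-4(r_1+\cdots+r_{n-1})+1}$. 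Hence, by \hyperref[prop3]{Proposition \ref*{prop3}}, the remaining integral is left-invariant under the corresponding $H_n$ of $\SO_{2k-4(r_1+\cdots+r_{n-1})+1}$. Since $H_n$ is embedded into $\SO_{2k+1}$ via (\ref{ebd}) so that it lies entirely in the bottom $\SO$-block of $L$, it commutes with all the $\GL_{2r_i}$ factors, and the invariance of $\Theta^L$ under $H_n$ translates directly into left invariance of $f(g)$ under $H_n(\bbA)$.

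The main obstacle I expect is purely bookkeeping: verifying that $H_n$ normalizes $U_L$ and preserves its trivial character (so that the left translation by $H_n$ commutes with the constant-term integration), and checking that the various characters $\psi_{r_i}$ and $\psi_{r_n}$ assemble correctly into $\psi_n$ under the factorization. Both of these follow directly from the block structure, as $H_n$ sits in the bottom $\SO$-factor of the Levi $L$ and hence normalizes $U_L$ trivially.
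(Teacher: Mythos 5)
Your proposal is correct and takes essentially the same route as the paper, whose proof is exactly ``apply Proposition \ref{prop1} (there in one shot with $r=2(r_1+\cdots+r_{n-1})$, rather than your iterated version over the finer Levi $\GL_{2r_1}\times\cdots\times\GL_{2r_{n-1}}\times\SO$) and then Proposition \ref{prop3} on the theta representation of the smaller orthogonal group''; your slightly finer choice of parabolic changes nothing essential. The bookkeeping points you flag (the decomposition $H(r_1,\dots,r_{n-1};r_n)=U_L\cdot H^0$, triviality of $\psi_n$ on $U_L$, and $H_n$ lying in the bottom $\SO$-factor of the Levi so that it commutes with the $\GL$-blocks and normalizes $U_L$) all check out.
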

\begin{proof}
	Apply \hyperref[prop1]{Proposition \ref*{prop1}} with $r=2(r_1+r_2+\cdots+r_{n-1})$. Then apply \hyperref[prop3]{Proposition \ref*{prop3}} on the theta representation of the smaller orthogonal group.
	\end{proof}
\subsection{The tower of the theta liftings}
Let $(\pi,\calV)$  be an irreducible cuspidal genuine automorphic representation of $\wt{\SO}_{2k+1}(\bbA)$. Suppose $\SO_{2k'}$ is a split even orthogonal group. By identifying $\SO_{2k'}\times \SO_{2k+1}$ with its embedded image in $\SO_{2k+2k'+1}$ via (\ref{ebd}), we consider functions on $\wt{\SO}_{2k'}(\bbA)$ of the form
\begin{equation}\label{lift}
f(h) = \int_{[\SO_{2k+1}]} \varphi(g)\bar{\theta}_{2k+2k'+1}(h,g)\,dg,
\end{equation}
where $\varphi$ is any vector in $\calV$ and $\theta_{2k+2k'+1}$ is any function in the representation space of $\Theta_{2k+2k'+1}$. This integral defines a map from the irreducible cuspidal genuine automorphic representations on $\wt{\SO}_{2k+1}(\bbA)$ to a genuine automorphic representation $\Theta_{2k+2k'+1}(\pi)$ on $\wt{\SO}_{2k'}(\bbA)$.

By fixing the representation $\pi$ of $\wt{\SO}_{2k+1}(\bbA)$ and varying the theta representations $\Theta_{2k+2k'+1}$ of $\wt{\SO}_{2k+2k'+1}(\bbA)$ with increasing $k'$, we obtain a tower of liftings of representations on  $\wt{\SO}_{2k'}(\bbA)$:
\begin{center}
	\begin{tikzcd}

	& &\Theta_{2k+2k'+3}(\pi) \\
	&& \Theta_{2k+2k'+1}(\pi)\\
	& &\vdots\\
	\pi \arrow[rr,"\vdots"]\arrow[uurr,end anchor={[xshift=-4ex]}]\arrow[uuurr,end anchor={[xshift=-4ex]}]
	& &\Theta_{2k+3}(\pi).	\end{tikzcd}
\end{center}
In \cite{BumpFG06}, Bump-Friedberg-Ginzburg show that if $\Theta_{2k+2k'+1}(\pi) =0$, then $\Theta_{2k+2k'-1}(\pi) =0$. It is also proved in \cite{BumpFG06} that any genuine cuspidal automorphic representation $\pi$ of $\wt{\SO}_{2k+1}(\bbA)$ lifts nontrivially to an automorphic representation on $\wt{\SO}_{8k}(\bbA)$. This raises the question of when the first non-zero theta lifting occurs along the tower for a fixed $\pi$. In the case when $\pi$ is generic, a conjecture in \cite{BumpFG06} states that $\pi$ should lift non-trivially to an automorphic representation of $\wt{\SO}_{2k+4}(\bbA)$. Those authors also proved the following result:
\begin{theorem}
	Let $\pi$ be an irreducible cuspidal genuine automorphic representation of $\wt{\SO}_{2k+1}(\bbA)$. If the representation $\Theta_{4k+5}(\pi)$ of $\wt{\SO}_{2k+4}(\bbA)$  is generic, then the representation $\pi$ is also generic.
\end{theorem}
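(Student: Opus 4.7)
The plan is to compute the Whittaker coefficient of the lift $f(h)$ on $\wt{\SO}_{2k+4}(\bbA)$, unfold the integral (\ref{lift}), reverse the order of integration, and exploit the invariance of $\theta_{4k+5}$ established in Proposition~\ref{prop3} and Corollary~\ref{cor} to descend the inner unipotent integration to a Whittaker integration on the $\SO_{2k+1}$-side. Fix $\varphi\in\calV$, $\theta_{4k+5}$ in the space of $\Theta_{4k+5}$, $h\in\wt{\SO}_{2k+4}(\bbA)$, and a generic character $\psi_N$ on the maximal unipotent $N_{2k+4}$ of $\SO_{2k+4}$ for which the Whittaker coefficient is non-zero. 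Substituting (\ref{lift}) and exchanging integrations (justified by cuspidality of $\varphi$),
\begin{equation*}
W(f)(h)=\int_{[\SO_{2k+1}]}\varphi(g)\,I(h,g)\,dg,\qquad I(h,g):=\int_{[N_{2k+4}]}\bar\theta_{4k+5}(\iota(vh,g))\,\psi_N(v)^{-1}\,dv.
\end{equation*}

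Next I match $I(h,g)$ with an integral of the form (\ref{peq:07}), where the ambient odd orthogonal group in the Corollary is now played by $\SO_{4k+5}$. Via $\iota$, $N_{2k+4}$ embeds into $\SO_{4k+5}$ as the subgroup generated by the Siegel unipotent of $\SO_{2k+4}$ together with the Borel unipotent of its Siegel Levi $\GL_{k+2}$. This embedded group sits inside a suitable $H(r_1,\dots,r_{n-1};r_n)$, with $\psi_N$ matching the character $\psi_n$ once the two simple roots at the $D_{k+2}$-fork of $\psi_N$ are absorbed into the relevant factors. The remaining unipotent directions are adjoined by Fourier expansion: every non-trivial character in that expansion produces either a Fourier coefficient of $\theta_{4k+5}$ attached to a unipotent orbit strictly larger than or incomparable to $\calO(\Theta_{4k+5})$ in Proposition~\ref{prop2}, or a semi-Whittaker coefficient of some $\Theta_{\GL_r}$ that vanishes by \cite{CAI19}, exactly as in the proof of Proposition~\ref{prop3}; hence only the trivial contribution survives.

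Having identified $I(h,g)$ with a function of the form (\ref{peq:07}), Corollary~\ref{cor} yields left-invariance of $I(h,g)$ under the image via $\iota$ of a maximal-parabolic unipotent $H_n$ of $\SO_{2k+1}$. Iterating the procedure on the successively smaller orthogonal blocks exposed in $\SO_{4k+5}$ propagates additional invariance until the full maximal unipotent $N_{2k+1}$ of $\SO_{2k+1}$ is covered; the characters introduced at successive iterations combine, through the intervening Weyl conjugations, into a generic character $\psi'$ of $N_{2k+1}$ with $I(h,ng)=\psi'(n)\,I(h,g)$ for all $n\in N_{2k+1}(\bbA)$. Folding this covariance back into the outer integration then yields
\begin{equation*}
W(f)(h)=\int_{N_{2k+1}(\bbA)\SO_{2k+1}(F)\setminus\SO_{2k+1}(\bbA)} W_{\psi'}(\varphi)(g)\,I(h,g)\,dg,
\end{equation*}
where $W_{\psi'}(\varphi)(g)=\int_{[N_{2k+1}]}\varphi(ng)\psi'(n)^{-1}\,dn$. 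Non-vanishing of $W(f)(h)$ forces $W_{\psi'}(\varphi)\not\equiv 0$, so $\pi$ is generic.

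The principal difficulty is the combined orbit analysis and character bookkeeping underlying the second and third paragraphs: one must verify that every Fourier-expansion term adjoined to bring $I(h,g)$ into the shape (\ref{peq:07}) is killed either by Proposition~\ref{prop2} or by the $\GL$-semi-Whittaker vanishing of \cite{CAI19}, and then track how $\psi_N$, a generic character of the $D_{k+2}$-type group $\SO_{2k+4}$, is transported through the iterations into a generic character $\psi'$ of the $B_k$-type group $\SO_{2k+1}$. Ensuring in particular that the two simple roots at the fork of the $D_{k+2}$ diagram together activate the short root of $B_k$ is what rules out a degenerate (non-generic) remainder in the final character $\psi'$, and it is the step I expect to require the most delicate combinatorial work.
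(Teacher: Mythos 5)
Your overall toolkit is the right one (this statement is the case $\calO=(2k+1)$, $l=1$, $U_\calO=N_{2k+1}$, $V_{\calO'}=N_{2k+4}$ of Theorem~\ref{Thm1}, and the paper's argument does unfold \eqref{lift}, exchange the order of integration, and kill unwanted terms via Proposition~\ref{prop2}, Proposition~\ref{prop3} and the semi-Whittaker vanishing of \cite{CAI19}), but the central mechanism in your third paragraph cannot work as stated. The kernel $I(h,g)=\int_{[N_{2k+4}]}\bar\theta_{4k+2l+3}(\iota(vh,g))\psi_N(v)\,dv$ is left $\SO_{2k+1}(F)$-invariant in $g$, since $\theta_{4k+2l+3}$ is automorphic and $\iota(1,\gamma)$ commutes with $\iota(vh,1)$. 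A function with this invariance cannot also satisfy $I(h,ng)=\psi'(n)I(h,g)$ for all $n\in N_{2k+1}(\bbA)$ with $\psi'$ generic: conjugating by rational torus elements $t\in T(F)\subset\SO_{2k+1}(F)$ (which stabilize no generic character of $N_{2k+1}$) gives $\psi'(n)I(h,g)=I(h,tng)=\psi'(tnt^{-1})I(h,tg)=\psi'(tnt^{-1})I(h,g)$, forcing $I\equiv 0$. So no amount of iteration of Corollary~\ref{cor} (which in any case produces invariance under a \emph{trivial} character, never a covariance by a nontrivial one) can endow $I(h,g)$ itself with the generic covariance you need, and the final "folding" step has nothing to act on.

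Relatedly, your second paragraph has the expansion logic backwards. The vanishing pattern "only the trivial character survives" is what happens inside the proof of Proposition~\ref{prop3}; in the main argument (see \eqref{eq:2}--\eqref{eq:18}) the Fourier expansion along the Heisenberg quotient $H_{s_1}/Z(H_{s_1})$ has the \emph{constant} term vanish (via the root $\beta$ of \eqref{root} and $\int_{\bbA/F}\psi(r)\,dr=0$), while the surviving contributions are the nontrivial characters $\psi_{\xi}$ of maximal rank with totally isotropic row space. Their rational orbit representatives and stabilizer cosets $P^0_{s_1}(F)\backslash\SO_{2k+1}(F)$ are then collapsed into the outer $[\SO_{2k+1}]$-integration, progressively unfolding it to $U_\calO(F)\backslash\SO_{2k+1}(\bbA)$; the conjugating elements $\omega_l z_l$ introduced along the way are exactly what make the new inner kernel $\int_{[R]}\bar\theta_{4k+2l+3}(u\,\omega_l z_l(1,g))\psi_{\calO',p,I_l}(u)\,du$ (no longer left $\SO_{2k+1}(F)$-invariant) transform under $U_\calO(\bbA)=N_{2k+1}(\bbA)$ by $\psi_\calO$, so that the Whittaker coefficient of $\varphi$ finally appears as in \eqref{eq:18}. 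If, as in your sketch, only the trivial characters survived, the two groups would decouple and no generic character could ever be transferred to $\varphi$. To repair the proposal you would need to replace the "match $I(h,g)$ with \eqref{peq:07} and iterate invariance" step by this expansion-plus-unfolding of rational orbits, which is precisely what the paper's proof of Theorem~\ref{Thm1} carries out.
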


On the other hand, there is not much known yet for the theta liftings when $\pi$ is non-generic. Motivated by the generic case, we make the following more general conjecture:
\begin{conjecture}\label{Con1}
	Let $(\pi,\calV)$ be an irreducible cuspidal genuine automorphic representation of $\wt{\SO}_{2k+1}(\bbA)$. Suppose 
	\[
	\calO = \left((2n_1+1)^{r_1}(2n_2+1)^{r_2}\cdots(2n_p+1)^{r_p}\right) \in\calO(\pi) 
	\]
	with $n_1>n_2>\cdots >n_p\geqslant 0$ and $r_i>0$ for all $i$. Then $\pi$ lifts nontrivially to an automorphic representation $\Theta(\pi)$ of $\wt{\SO}_{2k+2l+2}(\bbA)$ such that
	\[
	\calO' =  \left((2n_1+3)^{r_1}(2n_2+3)^{r_2}\cdots(2n_p+3)^{r_p}(1)\right)\in\calO\left(\Theta(\pi)\right).\\
	\]
\end{conjecture}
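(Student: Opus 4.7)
\textbf{Proof proposal for Conjecture \ref{Con1}.} The plan is to produce, from a nonzero Fourier coefficient $F_{\psi_\calO}(\varphi)(g_0)\neq 0$ of $\pi$ (which exists by the hypothesis $\calO\in\calO(\pi)$), data $(\varphi,\theta,h_0)$ with $\varphi\in\calV$, $\theta\in\Theta_{4k+2l+3}$, and $h_0\in\wt{\SO}_{2k+2l+2}(\bbA)$ such that the lifted function $f$ defined by (\ref{lift}) satisfies $F_{\psi_{\calO'}}(f)(h_0)\neq 0$ for a suitably chosen generic character $\psi_{\calO'}$ on $V_{2,\calO'}$. The nonvanishing of this single Fourier coefficient would simultaneously yield both conclusions of the conjecture: nonvanishing of $\Theta_{4k+2l+3}(\pi)$ and the membership $\calO'\in\calO(\Theta_{4k+2l+3}(\pi))$.

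The first step is to coordinate $\psi_{\calO'}$ and $\psi_\calO$ via the embedding $\iota$ of (\ref{ebd}): after substituting (\ref{lift}), the matrix argument of $\theta$ lies in $\SO_{2k+2l+2+2k+1}$, and the combined unipotent integration over $V_{2,\calO'}$ together with an expansion of $\varphi$ over $V_{2,\calO}\subset\SO_{2k+1}$ should assemble into a single character on a unipotent subgroup of $\SO_{2k+2l+2+2k+1}$ compatible with a unipotent orbit from which the wave-front constraint $(2^{2n}1)$ or $(2^{2n}1^3)$ of $\Theta_{4k+2l+3}$ (Proposition \ref{prop2}) can produce a nonzero contribution. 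One then computes
\[
F_{\psi_{\calO'}}(f)(h) \;=\; \int_{[V_{2,\calO'}]}\int_{[\SO_{2k+1}]}\varphi(g)\,\bar{\theta}(\iota(vh,g))\,\psi_{\calO'}(v)\,dg\,dv,
\]
interchanges the two integrations (justified, if necessary, by truncation of $\theta$), and Fourier expands the inner integral against $V_{2,\calO}$ on the $\SO_{2k+1}$ side. The $M(\calO)$-orbit of $\psi_\calO$ produces a term of the shape $\int_{[\SO_{2k+1}]} F_{\psi_\calO}(\varphi)(g)\,K(h,g)\,dg$, where $K(h,g)$ is a twisted period of $\bar{\theta}$.

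The heart of the argument is showing that every other $M(\calO)$-orbit of characters of $V_{2,\calO}$ contributes zero. I would apply Proposition \ref{prop3} and its Corollary iteratively together with Proposition \ref{prop2}: each unwanted character orbit should force the appearance of an inner Fourier coefficient of $\Theta_{4k+2l+3}$ attached to a unipotent orbit that is either larger than or incomparable with its wave-front support, hence vanishing. The semi-Whittaker arguments on the $\wt{\GL}_r$-blocks used in the proof of Proposition \ref{prop3} (via Proposition 4.1 of \cite{CAI19}) kill the remaining $\GL$-type character orbits that appear along the way. A root-exchange step then converts $K(h,g)$ into a nonvanishing equivariant functional of $\theta$ along a subgroup covered by the Corollary to Proposition \ref{prop3}, and one selects the data $(\varphi,\theta,h_0,g_0)$ to witness the final nonvanishing.

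The main obstacle is the combinatorial bookkeeping in the preceding step. Theorem \ref{mt} runs the analogous computation in reverse and needs only to exhibit \emph{some} nonzero character on the $\SO_{2k+1}$ side, so it can afford to discard most orbits. In the forward direction, by contrast, one must single out the precise character orbit corresponding to $\calO' = ((2n_1+3)^{r_1}\cdots(2n_p+3)^{r_p}(1))$ and match it with the specific $\calO = ((2n_1+1)^{r_1}\cdots(2n_p+1)^{r_p})$ through the theta wave front — a matching in which each $2n_i+3$ on the target side must be realized as a $2n_i+1$ on the source side plus a pair of rows absorbed from the theta factor, while the trailing $(1)$ block on the target side carries the extra dimension of $\wt{\SO}_{2k+2l+2}$ over $\wt{\SO}_{2k+1}$. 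Controlling that every competing orbit truly yields a Fourier coefficient outside the wave front of $\Theta_{4k+2l+3}$, and that within the $M(\calO')$-orbit of $\psi_{\calO'}$ a representative can be chosen for which the kernel $K(h,g)$ is not identically zero, is the delicate point on which the conjecture ultimately rests.
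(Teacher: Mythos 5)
The statement you are trying to prove is a \emph{conjecture}; the paper offers no proof of it, only supporting evidence (compatibility with the dimension equation in Proposition \ref{deqp}, and the converse direction, Theorem \ref{Thm1}). Your proposal is a strategy outline rather than a proof, and the step it leaves open is precisely the mathematical content of the conjecture. If you run the unfolding you describe — which is essentially the computation already carried out in the proof of Theorem \ref{Thm1} — you arrive at the identity that $F_{\psi_{\calO'}}(f)(1)$ equals the expression (\ref{eq:18}): an integral over $U_\calO(\bbA)\setminus\SO_{2k+1}(\bbA)$ of the product of $F_{\psi_\calO,U_\calO}(\varphi)(g)$ with a certain coefficient of $\bar{\theta}_{4k+2l+3}$ along $R$. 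Nonvanishing of the inner coefficient $F_{\psi_\calO,U_\calO}(\varphi)$ is necessary but in no way sufficient for the nonvanishing of this outer pairing for some choice of $(\varphi,\theta,h_0)$: it is an integral of a product of two automorphic quantities over a noncompact quotient, and it can a priori vanish identically for all data. Your closing sentence, ``one selects the data to witness the final nonvanishing,'' is exactly the gap; no mechanism (an unramified local computation of the pairing, a tower/first-occurrence argument as in the classical Rallis theory, or a period identity relating the pairing to an $L$-value) is supplied, and none is available in the paper. This asymmetry is why the paper can prove the backward implication (where one only needs to show that the surviving term of the expansion forces $F_{\psi_\calO}(\varphi)\neq 0$) but must leave the forward implication as a conjecture.

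Two smaller points. First, the orbit elimination you invoke (Propositions \ref{prop2}, \ref{prop3}, Corollary \ref{cor}, and the semi-Whittaker vanishing from \cite{CAI19}) lives on the theta side; it cannot be used to discard character orbits in a Fourier expansion of the cuspidal function $\varphi$ along $V_{2,\calO}$, since nothing is known about the full expansion of an arbitrary $\pi$ beyond the hypothesis $\calO\in\calO(\pi)$ — so the expansion must be organized as in the paper, on $\theta$, and the identity you get is again (\ref{eq:18}). Second, even granting the nonvanishing, the conclusion $\calO'\in\calO\left(\Theta(\pi)\right)$ requires $\calO'$ to be \emph{maximal} among orbits supporting $\Theta(\pi)$, by the definition of the wave front set in Section \ref{sec3}; your proposal only addresses the support statement and says nothing about ruling out strictly larger orbits.
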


Recall that $l=r_1+r_2+\cdots +r_p$ is the length of the partition corresponding to $\calO$. If $\pi$ is an irreducible cuspidal generic automorphic representation of $\wt{\SO}_{2k+1}(\bbA)$, then $\calO(\pi) =(2k+1)$. \hyperref[Con1]{Conjecture \ref*{Con1}} predicts that it lifts to an automorphic representation $\Theta(\pi)$ on $\wt{\SO}_{2k+4}(\bbA)$ with $\calO(\Theta(\pi)) = ((2k+3)(1))$, which agrees with the conjecture proposed in \cite{BumpFG06}.

For the remaining of this section, assume that the wave front set $\calO(\rho)$ is a singleton set for an automorphic representation $\rho$. Recall that the Gelfand-Kirillov dimension of the representation $\rho$ (see \cite{FG19}) is given by 
\[
\dim(\rho) = \frac{1}{2}\dim(\calO(\rho)) =\dim(V_{2,\calO(\rho)}) +\frac{1}{2}\dim (V_{1,\calO(\rho)}/V_{2,\calO(\rho)}).
\]
\begin{proposition} \label{deqp}
	Suppose $(\pi,\calV)$ is an irreducible cuspidal genuine automorphic representation of $\wt{\SO}_{2k+1}(\bbA)$ with $\calO(\pi)=\calO $ such that its theta lifting  $\Theta(\pi)$ on $\wt{\SO}_{2k+2l+2}(\bbA)$ has $ \calO\left(\Theta(\pi)\right)=\calO'$. Then
\begin{equation}\label{deq}
\dim(\SO_{2k+1}) +\dim (\Theta(\pi))= \dim (\pi)+\dim (\Theta_{4k+2l+3}).
\end{equation}
\end{proposition}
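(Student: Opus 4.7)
The plan is to reduce the dimension equation (\ref{deq}) to an elementary polynomial identity by computing each of the four Gelfand--Kirillov dimensions in closed form.

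First I will use the fact that both $\calO$ and $\calO'$ consist entirely of odd parts, so by the discussion preceding the proposition $V_{2,\calO}=V_{1,\calO}$ is the unipotent radical of the parabolic $P(\calO)=M(\calO)V_{1,\calO}$, and similarly for $\calO'$. Hence $V_{1}/V_{2}$ is trivial in both cases and the Gelfand--Kirillov dimensions collapse to
\[
\dim(\pi)=\tfrac{1}{2}\bigl(\dim\SO_{2k+1}-\dim M(\calO)\bigr),\qquad \dim(\Theta(\pi))=\tfrac{1}{2}\bigl(\dim\SO_{2k+2l+2}-\dim M(\calO')\bigr).
\]

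Next I will read off the Levi subgroups from the torus $h_\calO(t)$. Setting $a_j:=\sum_{i:\,n_i\geqslant j}r_i$ for $j\geqslant 1$ and $a_0:=l$, the Levi is $M(\calO)\cong \GL_{a_{n_1}}\times\cdots\times\GL_{a_1}\times\SO_l$. Since $\calO'$ shifts every $n_i$ up by one and adjoins the part $(1)$, its multiplicities satisfy $a'_j=a_{j-1}$ for $j\geqslant 1$ and $a'_0=l+1$; consequently $M(\calO')\cong \GL_{a_{n_1}}\times\cdots\times\GL_{a_1}\times\GL_l\times\SO_{l+1}$. This yields the clean identity $\dim M(\calO')-\dim M(\calO)=l^2+\binom{l+1}{2}-\binom{l}{2}=l(l+1)$.

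For the theta representation on the large cover, the identity $|\calO|=2k+1$ together with the assumption that all parts of $\calO$ are odd forces $l$ to be odd; setting $N:=2k+l+1$, \hyperref[prop2]{Proposition \ref*{prop2}} gives $\calO(\Theta_{4k+2l+3})=(2^{N},1)$. The associated torus $h_{(2^{N},1)}(t)=\diag(t,\ldots,t,1,t^{-1},\ldots,t^{-1})$ acts with weight $2$ on each long positive root $\epsilon_i+\epsilon_j$ of type $B_N$, weight $1$ on each short root $\epsilon_i$, and weight $0$ on each $\epsilon_i-\epsilon_j$. Hence $\dim V_{2}=\binom{N}{2}$ and $\dim V_{1}/V_{2}=N$, so
\[
\dim(\Theta_{4k+2l+3})=\tbinom{N}{2}+\tfrac{N}{2}=\tfrac{N^{2}}{2}=\tfrac{(2k+l+1)^{2}}{2}.
\]

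Finally, plugging these four dimensions into (\ref{deq}), the contributions $\sum_{j\geqslant 1}a_j^{2}$ coming from the shared $\GL$-factors cancel, and (\ref{deq}) reduces to the polynomial identity
\[
(2k+l+1)^{2}=k(2k+1)+(k+l+1)(2k+2l+1)-l(l+1),
\]
which is immediate by expansion. There is no serious obstacle; the only place requiring real care is the Levi bookkeeping of the second step, since the extra $\GL_l$ factor together with the replacement of $\SO_l$ by $\SO_{l+1}$ is precisely where the $l(l+1)$ term that balances the equation materializes.
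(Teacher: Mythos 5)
Your proof is correct and takes essentially the same route as the paper: since $\calO$ and $\calO'$ are odd, the Gelfand--Kirillov dimensions of $\pi$ and $\Theta(\pi)$ reduce to the dimensions of the unipotent radicals of $P(\calO)$ and $P(\calO')$, and together with $\dim(\Theta_{4k+2l+3})=\tfrac{(2k+l+1)^2}{2}$ (from $\calO(\Theta_{4k+2l+3})=(2^{2k+l+1}1)$, $l$ odd) the equation collapses to a polynomial identity. Your only additions --- deriving $\tfrac{N^2}{2}$ from the root weights of $h_{(2^N 1)}$ rather than quoting it, and packaging the Levi bookkeeping as $\dim M(\calO')-\dim M(\calO)=l(l+1)$ --- are just more explicit versions of the computation the paper carries out via the maximal unipotent subgroups of the Levi factors.
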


Before we verify (\ref{deq}), we need to fix some notations. Suppose, as in \hyperref[Con1]{Conjecture \ref*{Con1}},  
\[\calO =  \left((2n_1+1)^{r_1}(2n_2+1)^{r_2}\cdots(2n_p+1)^{r_p}\right),\quad n_1>n_2>\cdots >n_p\geqslant 0.
\]
Denote
\begin{equation}
	s_i:=\sum_{j=1}^{i}r_j,\quad i=1,2,\cdots, p.
\end{equation}
In particular, $l= s_p$ is the length of the partition corresponding to $\calO$. The fact that $\calO$ is odd implies that $V_{1,\calO} = V_{2,\calO}$, and both of these are the unipotent radical of the parabolic subgroup $P_{\calO}$ whose Levi part is  
\[
M(\calO) = \GL^{n_1-n_2}_{s_1}\times \GL^{n_2-n_3}_{s_2}\times \cdots\times \GL^{n_{p-1}-n_p}_{s_{p-1}}\times\GL_l^{n_p}\times  \SO_{l}.\]
For simplicity, we denote $U_\calO=V_{1,\calO} = V_{2,\calO}$.

Likewise, we denote by $V_{\calO'}$ the unipotent subgroup $V_{1,\calO'} = V_{2,\calO'}$ associated to the unipotent orbit
\[
\calO'= \left((2n_1+3)^{r_1}(2n_2+3)^{r_2}\cdots(2n_p+3)^{r_p}(1)\right)
\]  
in $\SO_{2k+2l+2}$. The corresponding Levi subgroup is 
\[
M(\calO') = \GL^{n_1-n_2}_{s_1}\times \GL^{n_2-n_3}_{s_2}\times \cdots\times \GL^{n_{p-1}-n_p}_{s_{p-1}}\times \GL^{n_p+1}_l\times \SO_{l+1}.
\]
\begin{proof}
Observe that  $\calO(\Theta_{4k+2l+3}) = (2^{2k+l+1}1)$ since $l$ is odd, and $\dim (\Theta_{4k+2l+3})= \frac{(2k+l+1)^2}{2}$. Also, $\dim(\SO_{2k+1}) = 2k^2+k$. 
Although the dimension of the representations $\Theta_{4k+2l+3}(\pi)$ and  $\pi $ may vary, it suffices to check that the difference between the dimensions agrees with that between $\dim (\Theta_{4k+2l+3})$ and $\dim(\SO_{2k+1})$. 

Notice that the difference between $\dim (\pi)$ and the dimension of the unipotent radical of the Borel subgroup of $\SO_{2k+1}$ is related to that between  $\dim (\Theta_{4k+2l+3}(\pi))$ and the dimension of the unipotent radical of the Borel subgroup of $\SO_{2k+2l+2}$. The former is precisely the dimension of the unipotent radical of the Borel subgroup of $M(\calO)$. We denote this dimension by $t+\frac{(l-1)^2}{4}$, where $\frac{(l-1)^2}{4}$ and $t$ are the dimensions of the maximal unipotent subgroup of the factor $\SO_l$ and the remaining Levi factors respectively. Hence, we obtain that $\dim(\pi) = k^2-t-\frac{(l-1)^2}{4}$. Similarly, we have $\dim(\Theta_{4k+2l+3}(\pi)) = (k+l+1)(k+l) -t-\frac{l^2-1}{4}-\frac{(l-1)(l)}{2}$. Therefore, the difference is exactly 
\begin{equation}
\frac{l^2+1}{2} +2kl+k+l = \frac{(2k+l+1)^2}{2}-(2k^2+k) = \dim(\Theta_{4k+2l+3})-\dim(\SO_{2k+1}).
\end{equation}
\end{proof}
\hyperref[deqp]{Proposition \ref*{deqp}} shows that \hyperref[Con1]{Conjecture \ref*{Con1}} agrees with the dimension equation proposed in \cite{GIN06}. The general philosophy of the dimension equation is that the sum of the dimensions of the representations is equal to the sum of the dimensions of the groups in the domain of integration in a global unipotent integral. In our case, this is given by equation (\ref{deq}). Refer to \cite{GIN06}, \cite{GIN14} and \cite{FG19} for more details on dimension equations.
\section{\textbf{Global theory}}\label{sec5}
We follow the notations in and after \hyperref[Con1]{Conjecture \ref*{Con1}}. Suppose $(\pi,\calV)$ is an irreducible genuine cuspidal automorphic representation of $\wt{\SO}_{2k+1}(\bbA)$. Let $\calO$ be the unipotent orbit of $\SO_{2k+1}$ such that
\[
\calO =\left((2n_1+1)^{r_1}(2n_2+1)^{r_2}\cdots(2n_p+1)^{r_p}\right), \quad n_1>n_2>\cdots >n_p\geqslant 0, \quad r_i>0\quad\forall i=1,2,\cdots,p.
\]
Let $l=r_1+r_2+\cdots +r_p$ be the length of the partition corresponding to $\calO$. Suppose $ \Theta_{4k+2l+3}(\pi)$ is the automorphic representation of $\wt{\SO}_{2k+2l+2}(\bbA)$ obtained by the theta lifting from $\pi$ via integrating functions in $\calV$ against the theta integral kernel $\Theta_{4k+2l+3}$ in the form of (\ref{lift}). As we are only concerned about a fixed theta lifting in this section, we suppress the subscript and simply let $\Theta(\pi) =  \Theta_{4k+2l+3}(\pi)$. We denote by $\calO'$ the unipotent orbit of the group $\SO_{2k+2l+2}$ associated to the partition
\[
\calO' =\left((2n_1+3)^{r_1}(2n_2+3)^{r_2}\cdots(2n_p+3)^{r_p}(1)\right)
\]
Recall that $V_{\calO'} =V_{1,\calO'} = V_{2,\calO'}$ is the unipotent radical of the parabolic subgroup of $\SO_{2k+2l+2}$ whose Levi subgroup is 
\[
M(\calO') = \GL^{n_1-n_2}_{s_1}\times \GL^{n_2-n_3}_{s_2}\times \cdots\times \GL^{n_{p-1}-n_p}_{s_{p-1}}\times \GL^{n_p+1}_l\times \SO_{l+1}.
\]
Notice that
\begin{equation}\label{501}
V_{\calO'}/V_{\calO'}^{(1)} \cong \Bigg(\bigoplus_{j=1}^{p-1} \Mat_{s_j\times s_{j+1}}\Bigg)\oplus \Bigg(\bigoplus_{j=1}^{p-1} \Mat_{s_j\times s_j}^{n_j-n_{j+1}-1}\Bigg)\oplus \Mat_{l\times l}^{n_p}\oplus \Mat_{l\times (l+1)}.
\end{equation}

We first define a character on $[V_{\calO'}/V_{\calO'}^{(1)}] $ by specifying it on each of the components, and then extend it trivially to a character on $[V_{\calO'}]$. For any abelian group $\Mat_{i\times j}$, we may identify the character group of $[\Mat_{i\times j}]$ with $\Mat_{i \times j}(F)$ via
\begin{equation}\label{chr}
X\in\Mat_{i\times j}(F)\mapsto \psi_X: Y\mapsto \psi(XY^T).
\end{equation}
Recall 
\begin{equation}
	s_i:=\sum_{j=1}^{i}r_j,\quad i=1,2,\cdots, p.
\end{equation}
Let
\[
I_{s_j,s_{j+1}} =\begin{pmatrix}
I_{s_j}&0
\end{pmatrix}\in \Mat_{s_j\times s_{j+1}}(F),\quad j= 1,2,\cdots ,p-1,
\]
and 
\[
I_{l,a} = \begin{pmatrix}
I_{\frac{l-1}{2}}&&&\\
&\frac{1}{2}&-a&\\
&&&-I_{\frac{l-1}{2}}
\end{pmatrix}\in\Mat_{l\times (l+1)}(F), \quad a\in F^\times.
\]
Consider the following characters each defined on the respective component in (\ref{501}):
\begin{equation}\label{charv}\begin{cases}
\psi_{s_j\times s_j}(v) =\psi(\Tr v)  &\text{if}\;v\in \Mat_{s_j\times s_j}(\bbA), j=1,2,\cdots, p, \\
\psi_{s_j\times s_{j+1}}(v) =\psi(\Tr (I_{s_j,s_{j+1}} v^T))& \text{if}\; v\in \Mat_{s_j\times s_{j+1}}(\bbA), j=1,2,\cdots, p-1,\\
\psi_{l\times (l+1)}(v) =\psi(\Tr (I_{l,a} v^T))&\text{if}\;v\in  \Mat_{l\times (l+1)}(\bbA).
\end{cases}
\end{equation}
Pulling back each of these characters via the projection map onto the respective component and taking the product afterwards, we obtain a character on $[V_{\calO'}/V_{\calO'}^{(1)}] $. Extend it to a character on $[V_{\calO'}]$ and denote the resulting character by $\psi_{a,V_{\calO'}}$.  This is a generic character attached to the unipotent orbit $\calO'$.

For any automorphic function in $\Theta(\pi)$ of the form
\[
f(h) = \int_{\SO_{2k+1}(F)\setminus\SO_{2k+1}(\bbA)} \varphi(g)\overline{\theta_{4k+2l+3}(h,g)}\,dg,
\]
we let
\begin{equation}\label{eq:1}
F_{\psi_{a,V_{\calO'}}}(f)(h)=\int_{[\SO_{2k+1}]}\int_{[V_{\calO'}]}\varphi(g)\overline{\theta_{4k+2l+3}(vh,g)}\psi_{a,V_{\calO'}}(v)\,dvdg.
\end{equation}

The maximal split torus of $\SO_{4k+2l+3}(F)$ normalizes $V_{\calO'}$. Conjugating the variable $v$ in the inner integration of (\ref{eq:1}) by  $\tau=\diag(t,\cdots,t,1,t^{-1},\cdots, t^{-1})), t\in F^\times $ leaves the integral unchanged. The automorphicity of $\theta_{4k+2l+3}$ implies that it is left invariant by $\tau$.  After a change of variables by $v\mapsto v\tau^{-1}$, we see that the Fourier coefficient depends only on the square class of $a$ in $F^\times$. When $a$ is a square, the connected component of the stabilizer of $\psi_{a,V_{\calO'}}$  in $M(\calO')(F)$ is split. In this case, we call $\psi_{a,V_{\calO'}}$ a split generic character, and denote it by $\psi_{\calO'}$.

On the other hand, recall  $U_\calO=V_{1,\calO} = V_{2,\calO}$ is the unipotent radical of the maximal parabolic subgroup $P_\calO$ of $\SO_{2k+1}$ with the corresponding Levi part 
\[
M(\calO) = \GL^{n_1-n_2}_{s_1}\times \GL^{n_2-n_3}_{s_2}\times \cdots\times \GL^{n_{p-1}-n_p}_{s_{p-1}}\times\GL_l^{n_p}\times  \SO_{l}.\]
In order to define a generic character on $[U_\calO]$, it suffices to specify the respective character on each of the components of the maximal abelian quotient
\begin{equation}
	U_{\calO}/U_{\calO}^{(1)} \cong \Bigg(\bigoplus_{j=1}^{p-1} \Mat_{s_j\times s_{j+1}}\Bigg)\oplus \Bigg(\bigoplus_{j=1}^{p-1} \Mat_{s_j\times s_j}^{n_j-n_{j+1}-1}\Bigg)\oplus \Mat_{l\times l}^{n_p-1}.
\end{equation}
We define these characters by
\begin{equation}\label{charx}\begin{cases}
		\psi_{s_j\times s_j}(v) =\psi(\Tr v)  &\text{if}\;v\in \Mat_{s_j\times s_j}(\bbA), j=1,2,\cdots, p, \\
		\psi_{s_j\times s_{j+1}}(v) =\psi(\Tr (I_{s_j,s_{j+1}} v^T))& \text{if}\; v\in \Mat_{s_j\times s_{j+1}}(\bbA), j=1,2,\cdots, p-1.
	\end{cases}
\end{equation}
This gives a generic character $\psi_{\calO}:[U_\calO]\rightarrow \bbC^\times $ .

\begin{theorem}\label{Thm1}
	Let $(\pi, \calV)$ be an irreducible cuspidal genuine automorphic representation of $\wt{\SO}_{2k+1}(\bbA)$. Suppose the theta lifting $\Theta(\pi)$, as a representation of $\wt{\SO}_{2k+2l+2}(\bbA)$, has a non-zero Fourier coefficient with respect to a split generic character associated with the unipotent orbit 
	\[
	\calO' =\left((2n_1+3)^{r_1}(2n_2+3)^{r_2}\cdots(2n_p+3)^{r_p}(1)\right).
	\]
	Then the representation $\pi$ has a non-zero Fourier coefficient with respect to some generic character associated with the unipotent orbit
	\[
	\calO =\left((2n_1+1)^{r_1}(2n_2+1)^{r_2}\cdots(2n_p+1)^{r_p}\right).
	\]
	\end{theorem}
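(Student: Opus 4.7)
The strategy is to start from the assumed nonvanishing
\[
F_{\psi_{\calO'}}(f)(h)=\int_{[\SO_{2k+1}]}\int_{[V_{\calO'}]}\varphi(g)\overline{\theta_{4k+2l+3}\bigl(\iota(vh,g)\bigr)}\,\psi_{\calO'}(v)\,dv\,dg
\]
and transform it, step by step, into an expression whose inner integral is the Fourier coefficient of $\varphi$ against a generic character attached to $\calO$. All manipulations take place inside $\SO_{4k+2l+3}$ via the embedding $\iota$, so the unipotents $V_{\calO'}$ and $U_\calO$ will be identified with their images there.

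First I would use the block description (\ref{501}) of $V_{\calO'}/V_{\calO'}^{(1)}$ together with the explicit character (\ref{charv}) to recognize that, under $\iota$, the group $V_{\calO'}$ sits inside a larger upper-triangular unipotent subgroup of $\SO_{4k+2l+3}$ built out of the block sizes $s_1\le s_2\le\cdots\le s_{p-1}\le l$; this is exactly the kind of unipotent covered by the corollary to Proposition~\ref*{prop3}. The plan is then to apply Corollary \ref*{cor} (iteratively, once for each nested pair of blocks in $\calO'$) to the theta kernel $\theta_{4k+2l+3}$ viewed as a function on $\wt{\SO}_{4k+2l+3}(\bbA)$. Each application allows us to enlarge the domain of integration (on the theta side) by a factor $H_n(\bbA)$ without changing the value of the twisted integral, so after finitely many steps the $v$-integration in $V_{\calO'}$ is augmented by integrations over unipotent pieces of $\SO_{4k+2l+3}$ that touch the $\SO_{2k+1}$ factor.

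Next I would exchange roots. Using the fact that $\calO$ is odd, so that $U_\calO=V_{1,\calO}=V_{2,\calO}$ is the unipotent radical of a parabolic of $\SO_{2k+1}$, the added theta-side variables can be paired with matching root subgroups of $\SO_{2k+1}$ living outside $V_{\calO'}$; by a standard sequence of conjugations and Fourier expansions, combined with cuspidality of $\pi$ to kill the constant terms along the complementary (opposite) root subgroups, the extra integrations migrate from the $h$ side to the $g$ side and materialize as an integration over $[U_\calO]$ twisted by a character. Tracking the character through these operations, the diagonal-trace pattern and off-diagonal identity blocks in (\ref{charv}) match (\ref{charx}) precisely in the components corresponding to the $M(\calO)$-Levi, while the extra $\GL_l$ and $\SO_{l+1}$ pieces of $M(\calO')$ (those not present in $M(\calO)$) get absorbed into theta-kernel integrations that persist but no longer involve $\varphi$. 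At each intermediate expansion, any character corresponding to a unipotent orbit of $\SO_{4k+2l+3}$ larger than or incomparable with $(2^{2k+l+1}1)$ contributes zero by Proposition \ref*{prop2}, and any semi-Whittaker term on $\wt{\GL}_{mr}$ with a part greater than $2$ vanishes by the results of \cite{CAI19} used in the proof of Proposition \ref*{prop3}.

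After these manipulations the integral assumes the form
\[
\int_{[\SO_{2k+1}]}\!\!\int_{[U_\calO]}\varphi(ug)\,\psi_\calO(u)\,\Phi(g)\,du\,dg
\]
for some function $\Phi(g)$ built out of the remaining theta integrations, where $\psi_\calO$ is the generic character of $U_\calO$ prescribed by (\ref{charx}). Since the starting Fourier coefficient is nonzero for suitable data, so is this expression, and hence for some $\varphi\in\calV$ the inner integral $\int_{[U_\calO]}\varphi(ug)\psi_\calO(u)\,du$ is nonvanishing; that is exactly the desired Fourier coefficient of $\pi$ attached to $\calO$. The main obstacle I anticipate is the bookkeeping in the middle step: keeping track of exactly which root subgroups get exchanged, verifying that each intermediate Fourier expansion produces only characters whose associated unipotent orbit is incomparable with $(2^{2k+l+1}1)$ (so that Proposition \ref*{prop2} and Proposition \ref*{prop3} apply), and checking that the residual character on $U_\calO$ is indeed generic rather than degenerate.
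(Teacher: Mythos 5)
Your proposal points at the right toolkit (the smallness of $\Theta_{4k+2l+3}$ via Proposition \ref{prop2}, the invariance property of Proposition \ref{prop3} and Corollary \ref{cor}, and the semi-Whittaker vanishing from \cite{CAI19}), and the overall direction—transform the $\calO'$-coefficient of the lift until a $\psi_\calO$-integral of $\varphi$ over $[U_\calO]$ appears—is the same as the paper's. But the central mechanism you invoke is not the one that works, and the step where it matters is left as a black box. You propose to kill constant terms "by cuspidality of $\pi$ along the complementary root subgroups." In the actual argument no unipotent integration is ever applied to $\varphi$ during the expansions: one Fourier-expands the theta kernel along the Heisenberg quotients $H_{s_1}/Z(H_{s_1})$, $H_{s_2}/Z(H_{s_2})$, \dots inside $\SO_{4k+2l+3}$, so cuspidality of $\pi$ has nothing to bite on. The constant term (and likewise every totally isotropic character of rank $q<s_i$, after conjugating by $\omega_q z_q$) is killed by a different device: first Proposition \ref{prop3} enlarges the domain from $[R_{s_i}]$ to $[R_{s_i^2}]$, and then the enlarged domain meets $V^1_{\calO'}$ in a root subgroup $x_\beta$ on which $\psi_{\calO'}$ is nontrivial, so the integral acquires the factor $\int_{\bbA/F}\psi(r)\,dr=0$. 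Without this root-exchange argument the constant terms simply do not vanish, and your plan stalls at the first expansion.

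The second gap is the assertion that the extra integrations "migrate from the $h$ side to the $g$ side and materialize as an integration over $[U_\calO]$." What actually produces the $[U_\calO]$-coefficient of $\varphi$ is an unfolding: in each expansion only the characters of $H_{s_i}/Z(H_{s_i})$ of maximal rank with totally isotropic row space survive, $\SO_{2k+1}(F)$ acts transitively on them, and collapsing the rational orbit sum with the integration over $[\SO_{2k+1}]$ successively replaces it by $P^0_{s_1}(F)\backslash\SO_{2k+1}(\bbA)$ and eventually $U_\calO(F)\backslash\SO_{2k+1}(\bbA)$; the final stage is structurally different (expansion along $H_p$ with characters in $\Mat_{l\times l}(F)$, where the surviving $\xi$ are those making $\psi_{\calO'}\psi_\xi$ non-generic, and $\SO_l(F)$ collapses the sum). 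Only after conjugating by the accumulated element $\omega_l z_l$, which carries $U_\calO(\bbA)$ into $R(\bbA)$ and matches $\psi_{\calO',p,I_l}$ with $\psi_\calO$, does the inner integral $\int_{[U_\calO]}\varphi(ng)\psi_\calO(n)\,dn$ appear. Your "bookkeeping" worry is precisely this chain—orbit classification by rank and isotropy, vanishing via Propositions \ref{prop2} and \ref{prop3}, transitive collapsing, and the final conjugation—and since none of it is supplied and the substitute mechanism (cuspidality) does not apply, the proposal as written does not yet constitute a proof.
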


\begin{proof}
Throughout the proof, we identify any subgroup of $\SO_{2k+1}$ or $\SO_{2k+2l+2}$ with its embedded image in $\SO_{4k+2l+3}$ via \ref{ebd}. The Fourier coefficient of $\Theta(\pi)$ with respect to a generic character depends only on the square class of $a$. Therefore, we may assume there exists data such that the following integral is non-vanishing:
\begin{equation}\label{eq:2}
F_{\psi_{\calO'}}(f)(1)=\int_{[\SO_{2k+1}]}\int_{[V_{\calO'}]}\varphi(g)\overline{\theta}_{4k+2l+3}(v,g)\psi_{\calO'}(v)\,dvdg.
\end{equation}

Denote by $R_{s_1} =R_{s_1,4k+2l+3}$ the unipotent radical of the maximal parabolic subgroup of $\SO_{4k+2l+3}$ with Levi subgroup $\GL_{s_1}\times \SO_{4k+2l-2s_1+3}$. Notice that $V_{s_1}=V_{\calO'}\cap R_{s_1} $ is non-trivial. The quotient $V_{s_1}\setminus R_{s_1}$ may be identified with the subgroup of matrices of the form
\[
H_{s_1}=H_{s_1,4k+2l+3}: =\left\{\begin{pmatrix}
I_{s_1}&&x&&\ast\\
&I_{k+l-s_1+1}&&&\\
&&I_{2k+1}&&x^\ast\\
&&&I_{k+l-s_1+1}&\\
&&&&I_{s_1}
\end{pmatrix}\in\SO_{4k+2l+3} :x\in \Mat_{s_1\times (2k+1)}\right\}.
\]
Although $H_{s_1}$ is not abelian, it is a Heisenberg group with $Z(H_{s_1})$ corresponding to matrices of the form
	\[
	\left\{\begin{pmatrix}
	I_{s_1}&0&z\\
	&I_{4k+2l-2s_1+3}&0\\
	&&I_{s_1}
	\end{pmatrix}:z^TJ_{s_1}+J_{s_1}z =0\right\}.
	\]
Notice that the center $Z(H_{s_1})(\bbA)\subset V_{\calO'}(\bbA)$ is included in the domain of integration in (\ref{eq:2}). As a result, we expand the integral (\ref{eq:2}) against the abelian quotient $\left[\left(H_{s_1}/Z(H_{s_1})\right)\right] \cong [\Mat_{s_1\times (2k+1)}]$.  
	
We may identify the character group of $\left[\left(H_{s_1}/Z(H_{s_1})\right)\right]$ with  $\Mat_{s_1\times (2k+1)}(F)$. We claim that the only non-zero contributions from the expansion are those characters corresponding to matrices in $\Mat_{s_1\times (2k+1)}(F)$ with maximal rank and totally-isotropic row space.  
	
First, we look at the contribution from the trivial character, which corresponds to the zero matrix in $\Mat_{s_1\times (2k+1)}(F)$. This is the constant term of the Fourier expansion given by
\begin{equation}	\int_{[\SO_{2k+1}]}\int_{[V_{\calO'}]}\int_{[H_{s_1}/Z(H_{s_1})]}\varphi(g)\overline{\theta}_{4k+2l+3}\left(u(v,g)\right)\psi_{\calO'}(v)\,dudvdg.	\end{equation}
Write $v =v_{s_1}v^1$ where $v_{s_1}\in V_{s_1}(\bbA)$ and $v^1\in V_{\calO'}^1(\bbA) := V_{\calO'}(\bbA)\cap \SO_{2k+2l-2s_1+2}(\bbA)$. Combining the two variables $u$ and $v_{s_1}$, we obtain
\begin{equation}\label{eq:3}
		\int_{[\SO_{2k+1}]}\int_{[V_{\calO'}^1]}\int_{[R_{s_1}]}\varphi(g)\overline{\theta_{4k+2l+3}\left(u(v^1,g)\right)}\psi_1(u)\psi_{\calO'}(v^1)\,dudv^1dg,
	\end{equation}
	where $\psi_1$ is the character  on $[R_{s_1}]$ given by
	\begin{equation}
		\psi_1(u) = \psi(u_{1,s+1}+u_{2,s+2}+\cdots+ u_{s+2s}).
	\end{equation}
	 Applying \hyperref[prop3]{Proposition \ref*{prop3}}, we see that (\ref{eq:3}) is equal to the integral
	\begin{equation}\label{eq:4}
		\int_{[\SO_{2k+1}]}\int_{[V_{\calO'}^1]}\int_{[R_{s^2_1}]}\varphi(g)\overline{\theta}_{4k+2l+3}\left(u(v^1,g)\right)\psi_1(u)\psi_{\calO'}(v^1)\,dudv^1dg,
	\end{equation}
	where we extend $\psi_1$ trivially to $[R_{s^2_1}]$ and $R_{s^2_1} = R_{s_1,4k+2l+3}R_{s_1,4k+2l-2s_1+3}$ is the unipotent radical of the maximal parabolic subgroup of $\SO_{4k+2l+3}$ with Levi subgroup
	\[
	\GL_{s_1}\times \GL_{s_1}\times\SO_{4k+2l-4s_1+3}.
	\] 
	Notice that $R_{s^2_1}\cap V_{\calO'}^1$ is non-trivial. Let $\beta$  be the root inside $\SO_{2k+2l+2}$ such that
\begin{equation}\label{root}
\beta= 	\begin{cases}
		\sum_{j=1}^{s_2} \alpha_{s_1+j}&\text{if}\; n_1-n_2=1\\
		\sum_{j=1}^{s_1} \alpha_{s_1+j}&\text{if}\; n_1-n_2>1,
		\end{cases}
		\end{equation}
		where we recall that $\alpha_i$'s are the positive simple roots of $\SO_{2k+2l+2}$. By construction, the one parameter subgroup $\{x_\beta(r): r\in\bbA\}$ associated to $\beta$  is in the intersection $R_{s^2_1}(\bbA)\cap V_{\calO'}^1(\bbA)$, and $\psi_{\calO'}$ is non-trivial on $x_\beta(r)$.  We may write  (\ref{eq:4}) as 
			\begin{multline*}
	\int_{[\SO_{2k+1}]}\varphi(g)\int_{V_{\calO'}^1(F)x_\beta(\bbA)\setminus V_{\calO'}^1(\bbA)}\int_{[R_{s^2_1}]}\int_{\bbA/F}\overline{\theta}_{4k+2l+3}\left(ux_\beta(r)(v^1,g)\right)\psi_1(u)\psi(r)\psi_{\calO'}(v^1)\,drdudv^1dg\\
		=\left(\int_{\bbA/F}\psi(r) \,dr\right)\int_{[\SO_{2k+1}]}\varphi(g)\int_{V_{\calO'}^1(F)x_\beta(\bbA)\setminus V_{\calO'}^1(\bbA)}\int_{[R_{s^2_1}]}\overline{\theta}_{4k+2l+3}\left(u(v^1,g)\right)\psi_1(u)\psi_{\calO'}(v^1)\,dudv^1dg.
		\end{multline*}
	This is zero since
		\[
		\int_{\bbA/F} \psi (r) \,dr =0 
		\]
		for the non-trivial character $\psi$.
		
Second, we look at the contributions from the non-trivial characters. We identify every non-trivial character $\psi_\xi$ of $[\left(H_{s_1}/Z(H_{s_1})\right)]$ with some non-zero $\xi \in \Mat_{s_1\times (2k+1)}(F)$. The conjugation action by the diagonally embedded group $\GL_{s_1}(F)\times \SO_{2k+1}(F)$ on $[\left(H_{s_1}/Z(H_{s_1})\right)]$ induces an action on $\Mat_{s_1\times (2k+1)}(F)$. If $\xi$ contains any row vector in $F^{2k+1}$ that is non-isotropic, then the corresponding contribution is the integral 
\begin{equation}\label{eq:5}
	\int_{[\SO_{2k+1}]}\int_{[V_{\calO'}^1]}\int_{[R_{s_1}]}\varphi(g)\overline{\theta}_{4k+2l+3}\left(u(v^1,g)\right)\psi_{1,\xi}(u)\psi_{\calO'}(v^1)\,dudv^1dg.
	\end{equation}
Here, for any $u = u_1u_2\in R_{s_1}(\bbA)$ with $ u_1\in V_{s_1}(\bbA),u_2\in H_{s_1}(\bbA)$, $\psi_{1,\xi}(u) = \psi_1(u_1)\psi_\xi(u_2)$ is a generic character associated with the unipotent orbit corresponding to the partition $(3^{s_1}1^{4k+2l+3-3s_1})$. Hence, (\ref{eq:5}) contains a Fourier coefficient of $\theta_{4k+2l+3}$ associated to the unipotent orbit $\calO_\xi = (3^{s_1}1^{4k+2l+3-3s_1})$, which is zero by \hyperref[prop2]{Proposition \ref*{prop2}}. 
			
Under the $\GL_{s_1}(F)\times \SO_{2k+1}(F)$ action, any $\xi\in \Mat_{s_1\times (2k+1)}(F)$ whose row space is not totally isotropic lies in the same orbit as the non-isotropic ones with the same rank. The same argument shows that the contribution of a character corresponding to any such $\xi$ is zero. 
			
Thus, the only possible non-zero contributions are from $\psi_\xi$ with the row space of the matrix $\xi$ being totally isotropic . All such matrices of a given rank lie in the same orbit under the $\GL_{s_1}(F)\times \SO_{2k+1}(F)$-action. Thus, we may pick the representatives as
		\begin{equation}\label{psixi}
		\xi_q = \begin{pmatrix}
		I_q&0\\
		0&0
		\end{pmatrix}\in \Mat_{s_1\times (2k+1)}(F), \quad q =1,2,\cdots, s_1.
		\end{equation}
		For a given $\xi_q$, the contribution of the character $\psi_{\xi_q}:[\left(H_{s_1}/Z(H_{s_1})\right)]\rightarrow \bbC^\times $ is 
		\begin{equation}\label{eq:6}
	\int_{[\SO_{2k+1}]}\int_{[V_{\calO'}^1]}\int_{[R_{s_1}]}\varphi(g)\overline{\theta}_{4k+2l+3}\left(u(v^1,g)\right)\psi_{1,\xi_q}(u)\psi_{\calO'}(v^1)\,dudv^1dg,
	\end{equation}
	where $\psi_{1,\xi_q}= \psi_1\psi_{\xi_q}$ is the character on $[R_{s_1}/Z(R_{s_1})]$ that corresponds to the matrix 
	\[
	\begin{pmatrix}
I_{s_1}&0_{s_1\times (k+l+1-2s_1)}&\xi&0_{s_1\times (k+l-s_1+1)}
	\end{pmatrix} \in \Mat_{s_1\times (4k+2l-2s_1+3)}(F).
	\]
	
	Let $z_q\in \SO_{4k+2l+3}(F)$  be the unipotent element
	\[
	z_q = \begin{pmatrix}
	I_{s_1}&&&&\\
&	\mu_q&&&\\
&	&I_{2k+1-2q}&&\\
	&&&\mu_q^\ast&\\
	&&&&I_{s_1}
	\end{pmatrix}, \mu_q=\begin{pmatrix}
	I_q&&-I_q\\
	&I_{k+l+1-s_1-q}&\\
	&&I_q
	\end{pmatrix}.
	\]
Performing a change of variables by $u \mapsto uz_q$,  (\ref{eq:6}) is equal to
	\begin{equation}\label{eq:7}
	\int_{[\SO_{2k+1}]}\int_{[V_{\calO'}^1]}\int_{[R_{s_1}]}\varphi(g)\overline{\theta}_{4k+2l+3}\left(uz_q(v^1,g)\right)\psi'_{1,\xi_q}(u)\psi_{\calO'}(v^1)\,dudv^1dg,
	\end{equation}
	where $\psi'_{1,\xi_q}$ is the character on $[\left(H_{s_1}/Z(H_{s_1})\right)]$ that corresponds to the matrix 
	\[
	\begin{pmatrix}
	I^\ast_{s_1-q}&0_{s_1\times (k+l+1-2s_1)}&\xi&0_{s_1\times (k+l-s+1)}
	\end{pmatrix}\in \Mat_{s_1\times (4k+2l-2s_1+3)}(F), 
	\]
	\[
	I^\ast_{s_1-q} = \begin{pmatrix}
	0&0\\
	0&I_{s_1-q}
	\end{pmatrix}\in \Mat_{s_1\times s_1}(F).
	\]

Let $\omega_q$ be the Weyl group element given by
	\[
	\omega_q=\begin{pmatrix}
	I_{s_1}&&&&\\
	&\nu_q&&&\\
	&&I_{2k+1-2q}&&\\
	&&&\nu_q^{-1}&\\
	&&&&I_{s_1}
	\end{pmatrix}\in \SO_{4k+2l+3}(F), \nu_q = \begin{pmatrix}
	&&&I_q&\\
&I_{s_1-q}&&&\\
I_q&&&&\\
&&I_{k+l+q+1-3s_1}&&\\
&&&&I_{s_1-q}
	\end{pmatrix}.
	\]
	The conjugation action of $\omega_q$ stabilizes $[R_{s_1}]$.  Therefore, we change the variable $u\mapsto \omega^{-1}_q u\omega_q$ and use the automorphicity of $\theta_{4k+2l+3}$ to obtain 
		\begin{equation}\label{eq:8}
	\int_{[\SO_{2k+1}]}\int_{[V_{\calO'}^1]}\int_{[R_{s_1}]}\varphi(g)\overline{\theta}_{4k+2l+3}\left(u\omega_qz_q(v^1,g)\right)\psi_1(u)\psi_{\calO'}(v^1)\,dudv^1dg.
	\end{equation}
Apply  \hyperref[prop3]{Proposition \ref*{prop3}} to the integral (\ref{eq:8}) to replace the integration on $[R_{s_1}]$ by $[R_{s^2_1}]$. We obtain
		\begin{equation}\label{eq:9}
	\int_{[\SO_{2k+1}]}\int_{[V_{\calO'}^1]}\int_{[R_{s^2_1}]}\varphi(g)\overline{\theta}_{4k+2l+3}\left(u\omega_qz_q(v^1,g)\right)\psi_1(u)\psi_{\calO'}(v^1)\,dudv^1dg.
	\end{equation}
	This integral is similar to  (\ref{eq:4}) except for the presence of the Weyl group element $\omega_q$.  However, the same argument implies that the contribution is zero as long as $R_{s^2_1} \cap \omega_q V_{\calO'}^1\omega_q^{-1}$ is non-trivial. This happens only when $\Rank(\xi_q)<s_1$. Hence, we conclude that the only non-zero contributions of the Fourier expansion are from characters $\psi_\xi$ on $[\left(H_{s_1}/Z(H_{s_1})\right)]$ corresponding to some  $\xi\in\Mat_{s_1\times (2k+1)}(F)$ with rank $s_1$ and totally isotropic row space. Notice that $\SO_{2k+1}(F)$ acts transitively from the right on such matrices. If we take 
	\[
	\xi_{s_1} = \begin{pmatrix}
	I_{s_1} &0
	\end{pmatrix} \in \Mat_{s_1\times (2k+1)}(F)
	\]
as a representative, then we conclude that the Fourier coefficient (\ref{eq:2}) is equal to 
\begin{equation}\label{eq:10}
\int_{[\SO_{2k+1}]}\int_{[V_{\calO'}^1]}\int_{[R_{s^2_1}]}\sum_{\gamma\in P_{s_1}^0(F)\setminus\SO_{2k+1}(F)}\varphi(g)\overline{\theta}_{4k+2l+3}\left(u\omega_{s_1}z_{s_1}\gamma(v^1,g)\right)\psi_1(u)\psi_{\calO}(v^1)\,dudv^1dg.
\end{equation}
Here, $P_{s_1}=P_{s_1,2k+1}$ is the standard maximal parabolic subgroup of $\SO_{2k+1}$ with Levi part  $\GL_{s_1}\times \SO_{2k-2s_1+1}$. The upper zero indicates that we omit the $\GL_{s_1}$ factor. In fact, $P^0_{s_1}(F)$ is the stabilizer of $\psi_{\xi_{s_1}}$ under the $\SO_{2k+1}(F)$ action. As any $\gamma\in P_{s_1}^0(F)\setminus\SO_{2k+1}(F)$ commutes with any $v^1\in V_{\calO'}^1(\bbA)$, we can combine the summation with the integration in (\ref{eq:10}) to rewrite it as 
\begin{equation}\label{eq:11}
\int_{P^0_{s_1}(F)\setminus \SO_{2k+1}(\bbA)}\int_{[V_{\calO'}^1]}\int_{[R_{s^2_1}]}\varphi(g)\overline{\theta}_{4k+2l+3}\left(u\omega_{s_1}z_{s_1}(v^1,g)\right)\psi_1(u)\psi_{\calO'}(v^1)\,dudv^1dg.
\end{equation}

To proceed, we start with the assumption that $n_1-n_2=1$ for notational simplicity. The same argument applies to the other case. Consider the unipotent subgroup $R_{s_2}= R_{s_2,4k+2l-4s_1+3}\subset \SO_{4k+2l+3}$, which is the unipotent radical of the standard maximal parabolic subgroup whose Levi part is $\GL_{s_1}^2 \times \GL_{s_2}\times \SO_{4k+2l-4s_1-2s_2+3}$. The group $R_{s_2}$ consists of matrices of the form
\begin{equation}\label{rs2}
\left\{\begin{pmatrix}
I_{2s_1}&&&&\\
&I_{s_2}&x&\ast&\\
&&I_{4k+2l-4s_1-2s_2+3}&x^\ast&\\
&&&I_{s_2}&\\
&&&&I_{2s_1}
\end{pmatrix}\in \SO_{4k+2l+3}: x\in \Mat_{s_2\times (4k+2l-4s_1-2s_2+3)}\right\}.
\end{equation}
We set $V_{s_2} = R_{s_2} \cap \left(\omega_{s_1}z_{s_1}V_{\calO'}^1(\omega_{s_1}z_{s_1})^{-1}\right)$, and the quotient $H_{s_2} = V_{s_2}\setminus R_{s_2}$. As the center $[Z(H_{s_2})]\subset [V_{s_2}]$ is included in the domain of integration in (\ref{eq:11}), we continue to expand (\ref{eq:11}) against $[H_{s_2}/Z(H_{s_2})]$.  We check on the contributions from each type of the characters on $[H_{s_2}/Z(H_{s_2})]$ under the action of $\GL_{s_2}(F)\times \SO_{4k+2l-4s_1-2s_2+3}(F)$. By the same argument, it follows that the only contribution is from the orbit of characters represented by $\psi_{\xi_{s_2}}$ corresponding to the matrix 
\[
\xi_{s_2} = \begin{pmatrix}
	I_{s_2} &0
\end{pmatrix}\in \Mat_{s_2\times (4k+2l-4s_1-2s_2+3)}(F).
\]

We continue the same argument repeatedly with the assumption that $n_i-n_{i+1} =1$ for all $i =1,2,\cdots, p-1$ and $n_p=1$ for notational simplicity. We deduce that (\ref{eq:11}) equals to
\begin{equation}\label{eq:12}
\int_{P_{\calO}^0(F)\setminus \SO_{2k+1}(\bbA)}\int_{[V_{\calO'}^p]}\int_{[R_{s^2_p}]}\varphi(g)\overline{\theta}_{4k+2l+3}\left(u\prod_{i=0}^{p-1}\omega_{s_{p-i}}z_{s_{p-i}}(v^p,g)\right)\psi_p(u)\psi_{\calO'}(v^p)\,dudv^pdg.
\end{equation}

We explain the notations here. First, $P_{\calO} =M(\calO)U_\calO$ is the maximal parabolic subgroup of $\SO_{2k+1}$ with the corresponding Levi decomposition. The upper zero indicates that we omit all the $\GL$-factors in the Levi factor $M(\calO)$. That is $P_{\calO}^0 \cong \SO_lU_\calO$. Next, $V_{\calO'}^p = V_{\calO'}\cap \SO_{3l+1}$ with $V_{\calO'}^p/(V_{\calO'}^p)^{(1)}\cong \Mat_{l\times (l+1)}$. Also, $R_{s^2_p}$ is the unipotent radical of the standard maximal parabolic subgroup of $\SO_{4k+2l+3}$ with Levi part given by
\[
\GL^{2(n_1-n_2)}_{s_1}\times \GL^{2(n_2-n_3)}_{s_2}\times \cdots\times \GL^{2(n_{p-1}-n_p)}_{s_{p-1}}\times \GL^{2n_p}_l\times \SO_{4l+1}.
\]
The character $\psi_p:[R_{s^2_p}]\rightarrow \bbC^\times$ is the product of $\psi_1$ and the characters corresponding to the non-zero contribution in each of the repeated steps. It is given by (similar to $\psi_n$ in \hyperref[cor]{Corollary \ref*{cor}})
\begin{equation*}\label{psip}
\psi_p(u) = \psi\left(\sum_{i=1}^{s_1}u_{j,s_1+j}+\sum_{j=1}^{s_2}u_{2s_1+j,2s_1+s_2+j}+
\cdots +\sum_{j=1}^{s_p}u_{2(s_1+\cdots+s_{p-1})+j,2(s_1+\cdots+s_{p-1})+s_p+j}\right).
\end{equation*}
The term $\prod_{i=0}^{p-1}\omega_{s_{p-i}}z_{s_{p-i}}$ is the product of  $\omega_{s_1}z_{s_1}$ and the corresponding Weyl group elements and unipotent elements produced during each of the repeated steps. To be more precise, for each $1\leqslant j\leqslant p$, we have
\[
\omega_{s_j} = \begin{pmatrix}
I_{2(s_1+\cdots+s_{j-1})}&&&&\\
&\nu_{s_j}&&&\\
&&I_{2k-2(s_1+\cdots+s_j)+1}&&\\
&&&\nu_{s_j}^{-1}&\\
&&&&I_{2(s_1+\cdots+s_{j-1})}
	\end{pmatrix},
	\]
	
	\[ \nu_{s_j} =\begin{pmatrix}
	I_{s_j}&&\\
	&&I_{s_j}\\
	&I_{k+l+1-2(s_1+\cdots+s_{j-1})-s_j}&
	\end{pmatrix},
	\]
and
	\[
	z_{s_j} =  \begin{pmatrix}
	I_{2(s_1+\cdots+s_{j-1})}&&&&\\
	&\mu_{s_j}&&&\\
	&&I_{2k-2(s_1+\cdots+s_j)+1}&&\\
	&&&\mu_{s_j}^{\ast}&\\
	&&&&I_{2(s_1+\cdots+s_{j-1})}
	\end{pmatrix},
	\]
	\[
	\mu_{s_j}=\begin{pmatrix}
	I_{s_j}&&&&\\
	&I_{s_j}&&-I_{s_j}\\
	&&I_{k+l+1-2(s_1+\cdots+s_j)}&\\
	&&&I_{s_j}
	\end{pmatrix}.
	\]
Notice that we can conjugate all the $z_j$'s to the right of all the $\omega_j$'s and rewrite (\ref{eq:12}) as 
 \begin{equation}\label{eq:13}
 \int_{P_{\calO}^0(F)\setminus \SO_{2k+1}(\bbA)}\int_{[V_{\calO'}^p]}\int_{[R_{s^2_p}]}\varphi(g)\overline{\theta}_{4k+2l+3}\left(u\omega_lz_l(v^p,g)\right)\psi_p(u)\psi_{\calO'}(v^p)\,dudv^pdg,
 \end{equation}
 with $\omega_l =\omega_{s_{p-1}}\omega_{s_{p-2}}\cdots\omega_{s_1}$ and 
 \[
 z_l =  \begin{pmatrix}
 I_{s_1}&&&&\\
 &\mu_l&&&\\
 &&I_l&&\\
 &&&\mu_l^{-1}&\\
 &&&&I_{s_1}
 \end{pmatrix}, \mu_l=\begin{pmatrix}
 I_{s_1+\cdots+s_{p-1}}&&-I_{s_1+\cdots+s_{p-1}}\\
&I_{k+l-2(s_1+\cdots+s_{p-1})+1}&\\
&&I_{s_1+\cdots+s_{p-1}}
 \end{pmatrix}.
 \]
 
 Now we proceed to the last step. Let $R_p\subset \SO_{4l+1}$ be the unipotent radical of the standard maximal parabolic subgroup of $\SO_{4l+1}$ with Levi part $\GL_{l}\times \SO_{2l+1}$. In terms of matrices, 
 \[
 R_p = \left\{\begin{pmatrix}
 I_{2(s_1+\cdots+s_{p-1})}&&&&\\
 &I_{l}&x&\ast&\\
 &&I_{2l+1}&x^\ast&\\
 &&&I_{l}&\\
 &&&&I_{2(s_1+\cdots+s_{p-1})}
 \end{pmatrix} \in \SO_{4k+2l+3}: x\in \Mat_{l\times (2l+1)}\right\}.
 \]
The subgroup $R_p$ is again a Heisenberg group with the maximal abelian quotient $R_p/R^{(1)}_p\cong \Mat_{l\times (2l+1)}$. On the other hand, for any $y=\begin{pmatrix}
y_1&y_2
\end{pmatrix}\in V_{\calO'}^p/(V_{\calO'}^p)^{(1)}\cong \Mat_{l\times (l+1)}$ where $y_1,y_2\in \Mat_{l\times (\frac{l+1}{2})}$, we have 
\[
\omega_lz_l y(\omega_lz_l)^{-1} =\begin{pmatrix}
y_1&0_{l\times l}&y_2
\end{pmatrix}\in R_p/R_p^{(1)}.
\]
Let $H_p  = \left(\omega_lz_l V^p(\omega_lz_l)^{-1}\right) \setminus R_p$, which is also a Heisenberg group. As the center $[Z(H_p)]\subset  [\left(\omega_lz_l V^p(\omega_lz_l)^{-1}\right)]$ is included in the domain of integration in the integral (\ref{eq:13}), we further expand (\ref{eq:13})  against $[\left(H_p/Z(H_p)\right)]$. 

We may identify the character group of $[\left(H_p/Z(H_p)\right)]$ with 
\[
\left(H_p/Z(H_p)\right)(F)\cong \Mat_{l\times l}(F).
\]
In the expansion, we conjugate elements in $[V^p_{\calO'}]$ to the left inside the function $\theta_{4k+2l+3}$ and combine this domain of integration with $[\left(H_p/Z(H_p)\right)]$. Denote the resulting domain of integration by $[R_p]$. Let $R=R_{s^2_p}R_p$, which is the unipotent subgroup that coincides with $V_{2,\calO^2}$ where $\calO^2$ is the unipotent orbit in $\SO_{4k+2l+3}$ corresponding to the partition 
\[
\left((4n_1+3)^{r_1}(4n_2+3)^{r_2}\cdots (4n_p+3)^{r_p}(1)^{l+1}\right).
\] As a result, (\ref{eq:13}) is equal to
 \begin{equation}\label{eq:14}
\int_{P_{\calO}^0(F)\setminus \SO_{2k+1}(\bbA)}\int_{[R]}\sum_{\xi\in\Mat_{l\times l}(F)}\varphi(g)\overline{\theta_{4k+2l+3}\left(u\omega_lz_l(1,g)\right)}\psi_{\calO',p,\xi}(u)\,dudg.
\end{equation}
Here, we combine all the characters involved to get the character $\psi_{\calO',p,\xi}$ on $[R]$. For any $x=x_1x_2x_3\in R(\bbA)$ with $x_1\in R_{s^2_p}(\bbA),x_2\in V_{\calO'}^p(\bbA)$ and $x_3\in H_p(\bbA)/(Z(H_p))(\bbA)$, 
	\[
	\psi_{\calO',p,\xi} (x) = \psi_p(x_1)\psi_{\calO'}(x_2)\psi_\xi(x_3).
	\]
	
	By \hyperref[prop1]{Proposition \ref*{prop1}}, the inner  integral of (\ref{eq:14}) factors as the product of an integral of a theta function in the theta representation $\Theta_{\GL}$ of the double cover $\wt{\GL}_{2(s_1+\cdots+s_{p-1})}(\bbA)$ with respect to the character $\psi_p$ and an integral of a theta function in $\Theta_{4l+1}$ of $\wt{\SO}_{4l+1}(\bbA)$ with respect to the character $\psi_{\calO'}\psi_\xi$. When the character $\psi_{\calO'}\psi_\xi$ is generic with respect to the unipotent orbit associated with the partition $(3^l1^{l+1})$, the second integral is a Fourier coefficient of the theta representation $\Theta_{4l+1}$ with respect to the unipotent orbit associated to $(3^l1^{l+1})$. By \hyperref[prop2]{Proposition \ref*{prop2}}, such an integral must be zero. Thus, the non-zero contributions in the expansion (\ref{eq:14}) come from those $\psi_\xi$ corresponding to $\xi\in \Mat_{l\times l}(F)$ such that $\psi_{\calO'}\psi_\xi$ is not generic.

   To proceed, let us fix a
    \[
    \xi =\begin{pmatrix}
    \lambda_1\\
    \vdots\\
   \lambda_l
    \end{pmatrix} \in\Mat_{l\times l}(F),
    \]
   with row vectors $\lambda_1,\cdots,\lambda_l\in F^l$.  By construction, the character $\psi_\eta :=\psi_{\calO'}\psi_\xi$ corresponds to the matrix
    \[
    \eta =\begin{pmatrix}
    1&&&&\lambda_1&&&&\\
    &\ddots&&&\vdots&&&&\\
    &&1&&\lambda_{\frac{l-1}{2}}&&&&\\
    &&&\frac{1}{2}&\lambda_{\frac{l+1}{2}}&-1&&&\\
    &&&&\lambda_{\frac{l+3}{2}}&&-1&&\\
    &&&&\vdots&&&\ddots&\\
    &&&&\lambda_l&&&&-1
    \end{pmatrix}\in\Mat_{l\times (2l+1)}(F).
    \]
    Notice that $\psi_\eta$ is generic as long as the row space of $\eta$ is not totally isotropic with respect to the non-degenerate bilinear form on $F^{2l+1}$ defined by $J_{2l+1}$. As a result, the contribution of $\psi_\eta$ is non-zero only if the row space of $\eta$ is totally isotropic and of rank $l$. That is, the row space of $\eta$ is a maximal totally isotropic subspace of $F^{2l+1}$. Consequently, we only need to consider $\xi$ with row vectors $\lambda_1, \lambda_2,\cdots \lambda_l$ satisfying  the following conditions:
    \begin{enumerate}
    	\item Each $\lambda_i$ except $\lambda_{\frac{l+1}{2}}$ is non-zero isotropic.\\
    	\item Each pair $\lambda_i$ and $\lambda_{l+1-i}$ for $i=1,2,\cdots, \frac{l-1}{2}$ is a hyperbolic pair, i.e.
    	\[
    	(\lambda_i,\lambda_{l+1-i}) = 1,\;i=1,2,\cdots,\frac{l-1}{2}.
    	\]
    	All these hyperbolic pairs are mutually orthogonal.
    	\item  The vector $\lambda_{\frac{l+1}{2}}$ is non-isotropic with $(\lambda_{\frac{l+1}{2}},\lambda_{\frac{l+1}{2}})=1$. \
    \end{enumerate}
The group $\SO_{l}(F)$ acts transitively from the right on the set of $\xi$'s and we  pick the identity matrix $I_{l}\in \Mat_{l\times l}(F)$ as a representative. As a result, the integral (\ref{eq:14}) is equal to
\begin{equation}\label{eq:15}
	\int_{P_{\calO}^0(F)\setminus \SO_{2k+1}(\bbA)}\int_{[R]}\sum_{\gamma\in\SO_{2l+1}(F)}\varphi(g)\overline{\theta}_{4k+2l+3}\left(u\gamma\omega_lz_l(1,g)\right)\psi_{\calO',p,I_{l}}(u)\,dudg.
\end{equation}
Since $\gamma $ commutes with the product $\omega_l z_l$, we can conjugate it to the right and collapse the summation with the outer integration to obtain  
	\begin{equation}\label{eq:16}
		\int_{U_{\calO}(F)\setminus \SO_{2k+1}(\bbA)}\int_{[R]}\varphi(g)\overline{\theta}_{4k+2l+3}\left(u\omega_lz_l(1,g)\right)\psi_{\calO',p,I_{l}}(u)\,dudg.
	\end{equation}
We further decompose the domain of integration of the outer integral as
\[
U_\calO(F)\setminus \SO_{2k+1}(\bbA) =\left(U_\calO(F)\setminus U_\calO(\bbA)\right)\left( U_\calO(\bbA)\setminus\SO_{2k+1}(\bbA)\right).
\]
We obtain 
\begin{equation}\label{eq:17}
\int_{U_\calO(F)\setminus U_\calO(\bbA)}\int_{U_\calO(\bbA)\setminus\SO_{2k+1}(\bbA)}\int_{[R]}\varphi(ng)\overline{\theta}_{4k+2l+3}\left(u\omega_lz_l(1,ng)\right)\psi_{\calO',p,I_{l}}(u)\,dudgdn.
\end{equation}
Observe that $n_0:= \omega_lz_l n(\omega_lz_l)^{-1} \in R(\bbA)$ for any $n\in U_\calO(\bbA)$, and $\psi_{\calO',p,I_{l}}(n_0) =\psi_\calO(n)$. Hence, we conjugate the variable $n$ inside the function $\theta_{4k+2l+3}$ to the left and perform a change of variable by $u \mapsto un_0^{-1} $ to finally obtain that (\ref{eq:17}) is equal to
\begin{equation}\label{eq:18}
\int_{U_\calO(\bbA)\setminus\SO_{2k+1}(\bbA)}\left(\int_{U_\calO(F)\setminus U_\calO(\bbA)}\varphi(ng)\psi_\calO(n)dn\right)\int_{[R]}\overline{\theta}_{4k+2l+3}\left(u\omega_lz_l(1,g)\right)\psi_{\calO',p,I_{l}}(u)\,dudg.
\end{equation}
Here $F_{\psi_\calO,U_\calO}(\varphi)(g) = \int_{U_\calO(F)\setminus U_\calO(\bbA)}\varphi(ng)\psi_\calO(n)dn$ is a Fourier coefficient of $\varphi$ associated to the unipotent orbit $\calO$ and the generic character $\phi_\calO$ on $[U_\calO]$. The fact that (\ref{eq:2}) is non-vanishing implies that $F_{\psi_\calO,U_\calO}(\varphi) $ is non-vanishing, which completes the proof.
	\end{proof}

\section{\textbf{Local theory}}\label{sec6}
In this section, we establish the local counterpart to \hyperref[Thm1]{Theorem \ref*{Thm1}}. Let $F$ be a non-archimedean local field. We are still concerned with the two unipotent orbits
\[
\calO =\left((2n_1+1)^{r_1}(2n_2+1)^{r_2}\cdots(2n_p+1)^{r_p}\right)
\]
and 
\[
\calO' =\left((2n_1+3)^{r_1}(2n_2+3)^{r_2}\cdots(2n_p+3)^{r_p}(1)\right)
\]
of the two groups $\SO_{2k+1}$ and $\SO_{2k+2l+2}$ respectively, where $l=r_1+r_2+\cdots r_p$. 

Recall that we can associate the two orbits $\calO$ and $\calO'$ with the unipotent subgroups $U_\calO=V_{1,\calO}= V_{2,\calO}$ and $V_{\calO'}=V_{1,\calO'} = V_{2,\calO'}$ respectively. Define the generic characters $\psi_\calO: U_{\calO}(F)\rightarrow \bbC^\times$ and $\psi_{\calO'}: V_{\calO'}(F)\rightarrow\bbC^\times$  similarly to the global ones given by (\ref{charv}) and (\ref{charx}) respectively.

Suppose $\pi$ is an irreducible genuine admissible representation of $\wt{\SO}_{2k+1}(F)$. Recall from \hyperref[sec41]{Section \ref*{sec41}} that $\Theta_{4k+2l+3}$ is the local theta representation of $\wt{\SO}_{4k+2l+3}(F)$. Suppose there exists an irreducible genuine admissible representation $\Theta(\pi)= \Theta_{4k+2l+3}(\pi)$ of $\wt{\SO}_{2k+2l+2}(F)$ such that the $\Hom$-space
\begin{equation}
	\Hom_{\wt{\SO}_{2k+1}\times \wt{\SO}_{2k+2l+2}}\left(\Theta_{4k+2l+3},\pi\otimes \Theta(\pi)\right) 	\end{equation}
is non-zero. Here, we restrict $\Theta_{4k+2l+3}$ to a representation of the product subgroup $\wt{\SO}_{2k+1}(F)\times \wt{\SO}_{2k+2l+2}(F)$ which is the preimage of the product $\SO_{2k+1}(F)\times \SO_{2k+2l+2}(F)$ in $\wt{\SO}_{4k+2l+3}(F)$ via the embedding (\ref{ebd}). 

We extend the definition of twisted Jacquet modules in \hyperref[sec33]{Section \ref*{sec33}}. If $\calO_0$ is an odd unipotent orbit, then $U:=V_{2,\calO_0}$ is equal to the unipotent radical of some maximal parabolic subgroup of $\SO_{2k+1}$. If the unipotent radical $U(F)$ is a Heisenberg group with its center $Z(U)(F)$ acting trivially on $\calV$ and $\psi_0$ is trivial on $Z(U)(F)$, then the vector subspace $\calV(U,\psi_0) = \calV(U/Z(U),\psi_0)$. Denote the quotient space $\calV /\calV(U/Z(U),\psi_0)$ by $J_{U/Z(U), \psi}(\pi)$. 


\begin{theorem}\label{Thm2}
	Let $\pi$ be an irreducible admissible representation of $\wt{\SO}_{2k+1}(F)$. Suppose there exists an irreducible admissible representation $\Theta(\pi)$ of $\wt{\SO}_{2k+2l+2}(F)$ such that, as representations of the group $\wt{\SO}_{2k+1}(F)\times \wt{\SO}_{2k+2l+2}(F)$,
	\begin{equation}\label{eq61}
		\Hom_{\wt{\SO}_{2k+1}\times \wt{\SO}_{2k+2l+2}}\left(\Theta_{4k+2l+3},\pi\otimes \Theta(\pi)\right) \neq 0.	\end{equation}
	Furthermore, suppose the twisted Jacquet module of $\Theta(\pi)$ with respect to the unipotent orbit $\calO'$ and the generic character  $\psi_{\calO'}$  is non-zero, i.e.
	\begin{equation}
		J_{V_{\calO'},\psi_{\calO'}}(\Theta(\pi)) \neq 0.
	\end{equation}
	Then the twisted Jacquet module of $\pi$ with respect to the unipotent orbit $\calO$ and the generic character $\psi_\calO$ is also non-zero, i.e.
	\begin{equation}
		J_{U_{\calO},\psi_\calO}(\pi) \neq 0.
	\end{equation}
\end{theorem}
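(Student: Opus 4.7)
The plan is to mirror the proof of Theorem \ref{Thm1} step by step, with each Fourier expansion along a unipotent subgroup replaced by the corresponding twisted Jacquet functor, and each vanishing-of-Fourier-coefficient argument replaced by a vanishing-of-twisted-Jacquet-module argument. Since the Jacquet functor is exact and commutes with tensoring by a representation of a commuting factor, applying $J_{V_{\calO'},\psi_{\calO'}}$ to the given nonzero map $\Theta_{4k+2l+3}\to\pi\otimes\Theta(\pi)$ on the $\wt{\SO}_{2k+2l+2}(F)$-factor yields a nonzero $\wt{\SO}_{2k+1}(F)$-equivariant map
\[
J_{V_{\calO'},\psi_{\calO'}}\bigl(\Theta_{4k+2l+3}\bigr)\longrightarrow \pi\otimes J_{V_{\calO'},\psi_{\calO'}}\bigl(\Theta(\pi)\bigr).
\]
The right-hand side is nonzero by hypothesis, and the aim is to show that any such map factors through $J_{U_\calO,\psi_\calO}(\pi)\otimes J_{V_{\calO'},\psi_{\calO'}}(\Theta(\pi))$, which forces $J_{U_\calO,\psi_\calO}(\pi)\neq 0$.

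To prove that factorization, I would analyze the left-hand side as a $\wt{\SO}_{2k+1}(F)$-representation by a Mackey/Bruhat filtration indexed by orbits on the character varieties of the abelian quotients of the Heisenberg subgroups $H_{s_1}, H_{s_2},\ldots, H_{s_p}, H_p$ that appear in the global proof. At each stage, the $\SO_{2k+1}(F)\times$(Levi)$(F)$-action on the characters of the abelian quotient stratifies the Jacquet module. Contributions from characters whose associated larger character on $R$ is generic for a unipotent orbit strictly above $\calO(\Theta_{4k+2l+3})$ vanish by the local smallness property (Proposition \ref{prop62}), since the corresponding twisted Jacquet module of the local theta representation is zero. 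The ``constant-term'' orbit corresponds to the trivial character on $H_{s_j}/Z(H_{s_j})$ and is handled as in the global case: Proposition \ref{prop61} describes the Jacquet module of $\Theta_{4k+2l+3}$ along the relevant maximal parabolic as $\Theta_{\GL_{*}}\otimes\Theta_{*}$, and the vanishing of semi-Whittaker models on the metaplectic $\wt{\GL}$ side (Cai) kills this term, exactly parallel to the global proof of Proposition \ref{prop3} and Corollary \ref{cor}.

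After iterating through all $p+1$ steps, only the orbit with maximal rank and totally isotropic row space survives at each layer, its stabilizer in $\SO_{2k+1}(F)$ being precisely $P_\calO^0(F)$, and conjugation by the Weyl element $\omega_l z_l$ transports the surviving character on $R$ to one whose restriction to $U_\calO(F)$ is $\psi_\calO$. At this final stage the left-hand side is, by Frobenius reciprocity on $\wt{\SO}_{2k+1}(F)$, built from vectors annihilated modulo $U_\calO(F)$ by $\pi(u)-\psi_\calO(u)$, so any map from it to $\pi$ factors through $J_{U_\calO,\psi_\calO}(\pi)$. This completes the argument.

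The main obstacle I expect is the careful bookkeeping of the Mackey-type orbit decomposition at each of the intermediate stages, especially verifying that the local analogs of Proposition \ref{prop3} and Corollary \ref{cor} (the ``invariance under $H_n$'' property) really descend to statements about Jacquet modules of the local theta representation rather than automorphic integrals. In the global setting this invariance is deduced by successive Fourier expansions followed by appeal to the smallness of $\Theta_{2k+1}$ and to the vanishing of semi-Whittaker coefficients on $\wt{\GL}_{nr}$; locally one must reorganize these as root-exchange identities between twisted Jacquet modules, combined with Proposition \ref{prop62} and its $\wt{\GL}$-analog. The underlying mechanism is the same, but translating each expansion into a clean statement about Jacquet functors (and checking that the Weyl conjugations used in the global proof remain compatible with the character $\psi_{\calO',p,I_l}$ on the local side) is where the technical work lies.
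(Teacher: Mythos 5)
Your overall route is the same as the paper's: pass from the nonzero (hence, by irreducibility of $\pi\otimes\Theta(\pi)$, surjective) element of the Hom space to a map into $\pi\otimes J_{V_{\calO'},\psi_{\calO'}}(\Theta(\pi))$, then peel off the Heisenberg layers $H_{s_1},\dots,H_{s_p},H_p$ via a Mackey/Bernstein--Zelevinsky-type filtration of twisted Jacquet modules, kill the bad strata, and finish with the stabilizer $P^0_{\calO}$ and Frobenius reciprocity; this is exactly the paper's argument, with Proposition \ref{prop63} as the local substitute for Proposition \ref{prop3}, and with the short exact sequence of Bernstein--Zelevinsky (Lemma A.1 of \cite{LE17}) supplying the compactly induced piece $\ind_{Q_{s_1}\times P^0_{s_1}}^{Q_{s_1}\times\wt{\SO}_{2k+1}}$ on which your final Frobenius-reciprocity step actually operates.

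There is, however, one genuine gap in the plan as you state it, and it is the step that makes the whole induction work: the elimination of the trivial character and, more generally, of all totally isotropic characters of rank $q<s_1$ on $H_{s_1}/Z(H_{s_1})$ (the latter you do not address at all). You propose to kill the constant-term stratum by Proposition \ref{prop61} plus Cai's semi-Whittaker vanishing, i.e.\ by an intrinsic vanishing of a twisted Jacquet module of $\Theta_{4k+2l+3}$. No such vanishing is available: neither Proposition \ref{prop61} (which concerns the untwisted Jacquet module) nor Cai's theorem gives $J_{R_{s_1},\psi_1}(\Theta_{4k+2l+3})=0$, and the paper never argues this way -- nor does the global proof of Theorem \ref{Thm1}, where the constant term dies from the oscillation $\int_{\bbA/F}\psi(r)\,dr=0$, not from Cai (Cai and Proposition \ref{prop62} enter only inside the proofs of Propositions \ref{prop3} and \ref{prop63}). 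The correct local mechanism is a source/target incompatibility: for every $q<s_1$, Proposition \ref{prop63} shows that the second-layer radical, and in particular the root subgroup $x_\beta(F)$ lying in $R_{s_1^2}\cap \omega_q z_q V_{\calO'}^1(\omega_q z_q)^{-1}$, acts trivially on $J_{R_{s_1},\psi_1}(\Theta^{\omega_q z_q}_{4k+2l+3})$, whereas it acts through the nontrivial character $\psi_{\calO'}$ on $\pi\otimes J_{V_{\calO'},\psi_{\calO'}}(\Theta(\pi))$, so every equivariant map out of these strata is forced to vanish; the same incompatibility argument must be repeated at each of the $p$ stages and again at the last layer $H_p$, where one must additionally classify the $\xi\in\Mat_{l\times l}(F)$ for which $\psi_{\calO'}\psi_\xi$ is non-generic and use the transitive $\SO_l(F)$-action. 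Without this character-incompatibility step your filtration does not isolate the full-rank totally isotropic orbit, so the concluding factorization through $J_{U_{\calO},\psi_{\calO}}(\pi)$ has nothing to apply to; once it is inserted, your outline coincides with the paper's proof.
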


In order to prove \hyperref[Thm2]{Theorem \ref*{Thm2}}, we need the following Proposition which is the local version of  \hyperref[prop3]{Proposition \ref*{prop3}}. For a positive integer $s<k/4$,  let $R_{s,2k+1}$ be the unipotent radical of the standard maximal parabolic subgroup of $\SO_{2k+1}$ with Levi factor $\GL_s\times \SO_{2k-2s+1}$. Similarly, let $R_{2s,2k+1}$ be the unipotent radical of the maximal parabolic subgroup of $\SO_{2k+1}$ with Levi factor $\GL_{2s}\times \SO_{2k-4s+1}$. Define the character $\psi_1:R_{s,2k+1}(F)\rightarrow \bbC^\times$ by
\[
\psi_1(u) =\psi(\sum_{j=1}^s u_{j,j+s}),\quad u=(u_{i,j})\in R_{s,2k+1}(F).\]
\begin{proposition}\label{prop63}
	Consider the local theta representation $\Theta_{2k+1}$ of $\wt{\SO}_{2k+1}(F)$.  There is a surjection of $\wt{\GL}_s^\Delta \times \wt{\SO}_{2k-4s+1}$ -modules 
	\[
	J_{R_{2s,2k+1}}(\Theta_{2k+1}) \twoheadrightarrow J_{R_{s,2k+1},\psi_1}(\Theta_{2k+1}),
	\]
	where $\GL_s^\Delta \times \SO_{2k-4s+1}$ is the subgroup of $\GL_{2s}\times \SO_{2k-4s+1}$ with the $\GL_s$-factor embedded in $\GL_{2s}$ diagonally.
\end{proposition}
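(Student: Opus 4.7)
The plan is to mirror the global argument of \hyperref[prop3]{Proposition \ref*{prop3}} in the local setting, replacing the Fourier expansion of an adelic integral by iterated stages of (twisted) Jacquet functors. Let $P\cap Q\subset\SO_{2k+1}$ denote the parabolic with Levi $\GL_s\times\GL_s\times\SO_{2k-4s+1}$ and unipotent radical $N$. A direct block-matrix inspection shows that $N$ admits the two Levi decompositions
\[
N \;=\; R_{s,2k+1}\rtimes R_{s,2k-2s+1}\;=\;R_{2s,2k+1}\rtimes U_{s,s},
\]
with $R_{s,2k+1}$ and $R_{2s,2k+1}$ normal in their respective decompositions. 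Here $R_{s,2k-2s+1}$ is the unipotent radical of the standard maximal parabolic of the Levi factor $\SO_{2k-2s+1}$ of $P_s$ with Levi $\GL_s\times\SO_{2k-4s+1}$, and $U_{s,s}$ is the unipotent radical of the $(s,s)$-parabolic of the Levi factor $\GL_{2s}$ of $P_{2s}$. Extend $\psi_1$ trivially along $R_{s,2k-2s+1}$ to a character $\tilde\psi$ on $N$; one checks that under the second decomposition $\tilde\psi$ restricts trivially to $R_{2s,2k+1}$ and to the standard generic character $u\mapsto\psi(\Tr X)$ on $U_{s,s}$, where $X\in\Mat_{s\times s}$ parametrizes $U_{s,s}$.

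Taking the double (twisted) Jacquet module $J_{N,\tilde\psi}(\Theta_{2k+1})$ in stages along each decomposition, and identifying $J_{R_{2s,2k+1}}(\Theta_{2k+1})\cong\Theta_{\GL_{2s}}\otimes\Theta_{2k-4s+1}$ by \hyperref[prop61]{Proposition \ref*{prop61}}, yields the chain of isomorphisms
\[
J_{U_{s,s},\,\psi(\Tr)}\bigl(\Theta_{\GL_{2s}}\otimes\Theta_{2k-4s+1}\bigr)\;\cong\;J_{N,\tilde\psi}(\Theta_{2k+1})\;\cong\;J_{R_{s,2k-2s+1}}\bigl(J_{R_{s,2k+1},\psi_1}(\Theta_{2k+1})\bigr).
\]
Consequently $J_{N,\tilde\psi}(\Theta_{2k+1})$ is simultaneously a canonical quotient of $J_{R_{2s,2k+1}}(\Theta_{2k+1})$ and of $J_{R_{s,2k+1},\psi_1}(\Theta_{2k+1})$. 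To produce the desired surjection it therefore suffices to show that the second quotient map $J_{R_{s,2k+1},\psi_1}(\Theta_{2k+1})\twoheadrightarrow J_{R_{s,2k-2s+1}}(J_{R_{s,2k+1},\psi_1}(\Theta_{2k+1}))$ is an isomorphism, i.e.\ that $R_{s,2k-2s+1}$ acts trivially on $J_{R_{s,2k+1},\psi_1}(\Theta_{2k+1})$. The $\wt\GL_s^{\Delta}\times\wt\SO_{2k-4s+1}$-equivariance of the resulting map then follows by identifying this group as the stabilizer of $\psi_1$ in the Levi $M_{P_s}$ modulo $R_{s,2k-2s+1}$ on one side and as the diagonal subgroup of $\GL_{2s}$ through the $(s,s)$-Levi on the other.

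The triviality of the $R_{s,2k-2s+1}$-action is the local counterpart of the left $R_{s,2k-2s+1}(\bbA)$-invariance established in \hyperref[prop3]{Proposition \ref*{prop3}}, and its proof transcribes the global argument step by step. In place of Fourier expansion against characters of $Z(R_{s,2k-2s+1})$ and then of $R_{s,2k-2s+1}/Z(R_{s,2k-2s+1})$, one analyzes the successive twisted Jacquet modules along this filtration and classifies the orbits of characters. Non-trivial central characters and characters whose representing matrix has a non-isotropic row vector give rise to twisted Jacquet modules of $\Theta_{2k+1}$ attached to the unipotent orbits $(4^{2q}3^{s-2q}1^{\ast})$ and $(5^{q}3^{s-q}1^{\ast})$; these all vanish by the local wave-front description \hyperref[prop62]{Proposition \ref*{prop62}}. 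For totally isotropic characters, conjugation by suitable Weyl group and unipotent elements followed by another application of \hyperref[prop61]{Proposition \ref*{prop61}} reduces the computation to a semi-Whittaker Jacquet module of the local exceptional representation $\Theta_{\GL_{3s}}$ of $\wt\GL_{3s}(F)$ attached to a partition containing a part $\geqslant 3$.

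The main obstacle is precisely this last step, since the corresponding vanishing of semi-Whittaker coefficients in \cite{CAI19} is stated globally. Nevertheless, Cai's argument is local at each place and yields the analogous vanishing of twisted Jacquet modules for the local exceptional representation $\Theta_{\GL_{3s}}(F)$; alternatively, the local statement follows from the global one by choosing appropriate test vectors. Once this semi-Whittaker vanishing is in place, the remainder of the argument is a routine local transcription of the proof of \hyperref[prop3]{Proposition \ref*{prop3}}, and the proposition follows.
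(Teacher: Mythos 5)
Your proposal follows essentially the same route as the paper: reduce, via \hyperref[prop61]{Proposition \ref*{prop61}}, to showing that $R_{s,2k-2s+1}(F)$ acts trivially on $J_{R_{s,2k+1},\psi_1}(\Theta_{2k+1})$, and then prove this triviality by the local transcription of the filtration/character-orbit argument of \hyperref[prop3]{Proposition \ref*{prop3}}, using \hyperref[prop62]{Proposition \ref*{prop62}} together with Cai's vanishing for $\Theta_{\GL_{3s}}$. The only point worth noting is that the hedge in your last paragraph is unnecessary (and the suggested deduction of the local statement from the global one is not reliable): the needed vanishing of the twisted Jacquet module of the local exceptional representation is already available as Corollary 3.34 of \cite{CAI19}, which is exactly what the paper invokes.
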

\begin{proof}
	By \hyperref[prop61]{Proposition \ref*{prop61}}, it suffices to show that the unipotent subgroup
	\[
	R_{s,2k-2s+1}(F)\subset \SO_{2k-2s+1}(F) \xhookrightarrow{\iota} \SO_{2k+1}(F)
	\]  acts trivially on the Jacquet module $J_{R_{s,2k+1},\psi_1}(\Theta_{2k+1})$. The proof then proceeds in a similar fashion to the global case. 
	
	Let $R_{s,2k-2s+1}$ be the unipotent radical of the maximal parabolic subgroup of $\SO_{2k-2s+1}$ with the Levi factor $\GL_{s}\times \SO_{2k-4s+1}$. We identify $R_{s,2k-2s+1}$ with its embedded image in $\SO_{2k+1}$ via (\ref{ebd}). Note that $R_{s,2k-2s+1}(F)$ is a Heisenberg group. We claim that its center $Z(R_{s,2k-2s+1})(F)$ acts trivially on $J_{R_{s,2k+1},\psi_1}(\Theta_{2k+1})$. 

Under the action of the Levi subgroup, any non-trivial character on $Z(R_{s,2k-2s+1})(F)$ may be represented by $\psi_{\xi_t}$ ($0<2t\leqslant s$) associated to a matrix $\xi_t$ of the form
	\[
	\xi_t = \begin{pmatrix}
		0&z_t\\
		0&0
	\end{pmatrix}\in \Mat_{s\times s}(F),
	\]
	where $z_t = \diag(\lambda_1,\cdots,\lambda_t,\lambda_t^{-1},\cdots,\lambda_1^{-1})\in \Mat_{2t\times 2t}(F)$ with $\lambda_i\in F^\times, \,\forall i=1,\cdots,t$. If the claim is not true, then there must be a non-trivial character $\psi_{\xi_t}$ on $Z(R_{s,2k-2s+1})(F)$ such that 
	\begin{equation}\label{jac}
	J_{Z(R_{s,2k-2s+1}), \psi_{\xi_t}}\left(J_{R_{s,2k+1,},\psi_1}(\Theta_{2k+1})\right)\neq 0.
	\end{equation}
	However, the product of $\psi_1$ and  $\psi_{\xi_t,Z}$ is a generic character attached to the unipotent orbit $\calO_t=(4^{2t}3^{s-2t}1^{2k-2t-3s+1})$. Hence, the resulting twisted Jacquet module (\ref{jac}) is zero by \hyperref[prop62]{Proposition \ref*{prop62}}. We get a contradiction.
	
	Therefore, it remains to establish that the abelian quotient $R_{s,2k-2s+1}/Z(R_{s,2k-2s+1})(F)$ also acts trivially on $J_{R_{s,2k+1},\psi_1}(\Theta_{2k+1})$. We may identify the character group of $R_{s,2k-2s+1}/Z(R_{s,2k-2s+1})(F)$ with $\Mat_{s\times (2k-4s+1)}(F)$. 	Under the action of Levi subgroup $\GL_{s}(F)\times \SO_{2k-4s+1}(F)$, any character on  $R_{s,2k-2s+1}/Z(R_{s,2k-2s+1})(F)$ may be represented by $\psi_{2,z_t}$, corresponding to a matrix $z_t$ of the form
	\[
	z_t=\begin{pmatrix}
		I_t&0\\
		0&0
	\end{pmatrix} \in \Mat_{s\times (2k-4s+1)}(F), t=1,\cdots, s.
	\]
	If the action of  $R_{s,2k-2s+1}/Z(R_{s,2k-2s+1})(F)$ on $J_{R_{s,2k+1},\psi_1}(\Theta_{2k+1})$ is not trivial, then there exists a character $\psi_{2,z_t}$ such that \begin{equation}\label{jac2}
	J_{R_{s,2k-2s+1}/Z(R_{s,2k-2s+1}), \psi_{2,z_t}}\left(J_{Z(R_{s,2k-2s+1})}\left(J_{R_{s,2k+1,},\psi_1}(\Theta_{2k+1})\right)\right)\cong J_{R_{s^2,2k+1},\psi_1\psi_{2,z_t}}(\Theta) \neq 0,
	\end{equation}
	where $R_{s^2,2k+1}=R_{s,2k+1}R_{s,2k-2s+1}$. 
	
	Proceed with the twisted Jacquet module $J_{R_{s^2,2k+1},\psi_1\psi_{2,z_t}}(\Theta_{2k+1}) $. Apply the same argument to see that $Z(R_{s,2k-4s+1})(F)$ acts on it trivially.  Hence, it suffices to check the action of the abelian quotient $R_{s,2k-4s+1}/Z(R_{s,2k-4s+1})(F)$ on $J_{R_{s^2,2k+1},\psi_1\psi_{2,z_t}}(\Theta_{2k+1}) $. If the action is trivial, \hyperref[prop61]{Proposition \ref*{prop61}} implies that there is a $\wt{\GL}_{3s}(F)\times \wt{\SO}_{2k-6s+1}(F)$-module isomorphism 
	\begin{equation}
	J_{R_{s,2k-4s+1}}\left(J_{R_{s^2,2k+1},\psi_1\psi_{2,z_t}}(\Theta_{2k+1}) \right) \cong J_{N_{s^3}, \psi_1\psi_{2,z_t}}\left(\Theta_{\GL_{3s}(F)} \right)\otimes \Theta_{2k-6s+1}.
	\end{equation}
	Here $\Theta_{\GL_{3s}(F)} $ is the local theta representation of the double cover $\wt{\GL}_{3s}(F)$, and $N_{s^3}$ is the unipotent radical of the parabolic subgroup of $\GL_{3s}$ with Levi subgroup $\GL_{s}^3$.  However, by Corollary 3.34 of \cite{CAI19}, the twisted Jacquet module $J_{N_{s^3}, \psi_1\psi_{2,t}}\left(\Theta_{\GL_{3s}(F)} \right)$ is zero. This is a contradiction to (\ref{jac2}).	Therefore, there is a non-trivial character on $R_{s,2k-4s+1}(F)$ such that the twisted Jacquet module of $J_{R_{s^2,2k+1},\psi_1\psi_{2,t}}(\Theta_{2k+1})$ with respect to $R_{s,2k-4s+1}(F)$ and this character is non-zero. 
	
	We continue by the same argument repeatedly. For each step, the corresponding unipotent radical acts trivially due to Corollary 3.34 of \cite{CAI19}. Eventually, we obtain a non-zero twisted Jacquet module of  $\Theta_{2k+1}$ with respect to some unipotent orbit that is not comparable to $\calO(\Theta_{2k+1})$. By \hyperref[prop62]{Proposition \ref*{prop62}}, such a twisted Jacquet module must be zero, which is a contradiction. Thus, the action of  $R_{s,2k-2s+1}/Z(R_{s,2k-2s+1})(F)$ on $J_{R_{s,2k+1},\psi_1}(\Theta_{2k+1})$ must be trivial, which completes the proof.
	\end{proof}

With enough tools at our disposal, we are now ready to prove \hyperref[Thm2]{Theorem \ref*{Thm2}}.  Throughout the proof, we identify any subgroup of $\SO_{2k+1}$ or $\SO_{2k+2l+2}$ with its embedded image in $\SO_{4k+2l+3}$ via \ref{ebd}. Similar to the proof of \hyperref[Thm1]{Theorem \ref*{Thm1}}, we only discuss the case of $n_i-n_{i+1} =1$ for all $i =1,2,\cdots, p-1$ and $n_p=1$ for notational simplicity.
\begin{proof}
	Fix a non-zero $\Psi_1 \in \Hom\left(\Theta_{4k+2l+3}, \pi\otimes \Theta(\pi)\right)$. It is clear that $\Psi_1$ must be surjective because both $\pi$ and $\Theta(\pi)$ are irreducible so that $\pi\otimes \Theta(\pi)$ is irreducible as a representation of the group $\wt{\SO}_{2k+1}(F)\times \wt{\SO}_{2k+2l+2}(F)$. As a result, $\Psi_1$ factors through the non-zero twisted Jacquet module $J_{V_{\calO'},\psi_{\calO'}}(\Theta(\pi)) $. In other words, $\Psi_1$ induces a non-zero $\wt{\SO}_{2k+1}(F)\times M^{\psi_{\calO'}}(\calO')(F)V_{\calO'}(F)$-equivariant morphism which we still denote by $\Psi_1$:
	\begin{equation}\label{eq62}
			\Psi_1: \Theta_{4k+2l+3}\rightarrow  \pi\otimes J_{V_{\calO'},\psi_{\calO'}}(\Theta(\pi)).
	\end{equation}
	Here, $M^{\psi_{\calO'}}(\calO')(F)$ denotes the stabilizer of $\psi_{\calO'}$ in $M(\calO')(F)$.
	
	 Recall we have $V_{s_1}=V_{\calO'}\cap R_{s_1}$ where $R_{s_1} =R_{s_1,4k+2l+3}$ is the unipotent radical of the maximal parabolic subgroup of $\SO_{4k+2l+3}$ with Levi subgroup $ \GL_{s_1}\times \SO_{4k+2l-2s_1+3}$.  Notice that $V_{s_1}(F)$ acts on $J_{V_{\calO'},\psi_{\calO'}}(\Theta(\pi))$ by the character $\psi_1$, and $\Psi_1$ is $V_{s_1}(F)$-equivariant. As a result, $\Psi_1$ must factor through the non-zero twisted Jacquet module of $\Theta_{4k+2l+3}$ with respect to the unipotent subgroup $V_{s_1}(F)$ and the character $\psi_{\calO'}$ restricted on $V_{s_1}(F)$, which we denote $\psi_1$. If we denote the resulting map by $\Psi_1'$, then 
	\begin{equation}\label{eq63}
	\Psi_1': J_{V_{s_1},\psi_1}(\Theta_{4k+2l+3})\rightarrow\pi\otimes J_{V_{\calO'},\psi_{\calO'}}(\Theta(\pi))
 \end{equation}
is non-zero.
	 
	Consider the Heisenberg group $H_{s_1}= V_{s_1}\setminus R_{s_1}$, which may be identified with the subgroup of matrices of the form
\[
H_{s_1}=H_{s_1,4k+2l+3}: =\left\{\begin{pmatrix}
	I_{s_1}&&x&&\ast\\
	&I_{k+l-s_1+1}&&&\\
	&&I_{2k+1}&&x^\ast\\
	&&&I_{k+l-s_1+1}&\\
	&&&&I_{s_1}
\end{pmatrix}\in\SO_{4k+2l+3} :x\in \Mat_{s_1\times (2k+1)}\right\}.
\]
As $Z(H_{s_1})(F)\subset V_{s_1}(F)$ acts trivially on  $J_{V_{s_1},\psi_1}(\Theta_{4k+2l+3})$, we consider the action of the abelian quotient $H_{s_1}/Z(H_{s_1})(F)$ on the twisted Jacquet module $J_{V_{s_1},\psi_1}(\Theta_{4k+2l+3})$.  If there exists a non-zero vector in $J_{V_{s_1},\psi_1}(\Theta_{4k+2l+3})$ on which  $H_{s_1}/Z(H_{s_1})(F)$ acts by a character $\psi'$, then $\Psi_1'$ must factor through the non-trivial twisted Jacquet module
\begin{equation}\label{jac3}
J_{H_{s_1}/Z(H_{s_1})(F),\psi'}\left(J_{Z(H_{s_1})(F)}(J_{V_{s_1},\psi_1}(\Theta_{4k+2l+3}))\right) \cong  J_{R_{s_1},\psi_{1}\psi'}(\Theta_{4k+2l+3}).
\end{equation}

We may identify the character group of $H_{s_1}/Z(H_{s_1})(F)$ with $\Mat_{s_1\times (2k+1)}(F)$. Under the action of $ \GL_{s_1}(F)\times \SO_{2k+1}(F)$, suppose $\psi'$ corresponds to a matrix in $\Mat_{s_1\times (2k+1)}(F)$ that lies in the same conjugacy class with a matrix that contains a non-isotropic row vector. Then the product of $\psi_1$ and $\psi'$ is a generic character attached to the unipotent orbit associated with the partition $(3^{s_1}1^{4k+2l+3-3s_1})$. By \hyperref[prop62]{Proposition \ref*{prop62}}, the twisted Jacquet module corresponding to this character (\ref{jac3}) is zero.

It remains to examine over those characters on $H_{s_1}/Z(H_{s_1})(F)$ corresponding to a matrix in $\Mat_{s_1\times (2k+1)}(F)$ with totally isotropic row space. Any such $\psi'$  lies in the same conjugacy class with a character $\psi_{\xi_q}$ corresponding to a matrix $\xi_q$ of the form
	\begin{equation}
	\xi_q = \begin{pmatrix}
		I_q&0\\
		0&0
	\end{pmatrix}\in \Mat_{s_1\times (2k+1)}(F), \quad q =0,1,\cdots, s_1.
\end{equation}

For a fixed $q$,  we check on $\Psi_1'$ restricted to the twisted Jacquet module of the form
\[
J_{H_{s_1}/Z(H_{s_1})(F),\psi_{\xi_q}}\left(J_{Z(H_{s_1})(F)}(J_{V_{s_1},\psi_1}(\Theta_{4k+2l+3}))\right) \cong  J_{R_{s_1},\psi_{1,\xi_q}}(\Theta_{4k+2l+3}),
\]
where $\psi_{1,\xi_q} = \psi_1\psi_{\xi_q}$. Consider the elements $w_q, z_q\in \SO_{4k+2l+3}(F)$ of the form 
\[
z_q = \begin{pmatrix}
	I_{s_1}&&&&\\
	&	\mu_q&&&\\
	&	&I_{2k+1-2q}&&\\
	&&&\mu_q^\ast&\\
	&&&&I_{s_1}
\end{pmatrix}, \mu_q=\begin{pmatrix}
	I_q&&-I_q\\
	&I_{k+l+1-s_1-q}&\\
	&&I_q
\end{pmatrix},
\]
and
\[
\omega_q=\begin{pmatrix}
	I_{s_1}&&&&\\
	&\nu_q&&&\\
	&&I_{2k+1-2q}&&\\
	&&&\nu_q^{-1}&\\
	&&&&I_{s_1}
\end{pmatrix}, \nu_q = \begin{pmatrix}
	&&&I_q&\\
	&I_{s_1-q}&&&\\
	I_q&&&&\\
	&&I_{k+l+q+1-3s_1}&&\\
	&&&&I_{s_1-q}
\end{pmatrix}.
\]
Note that $z_0$ and $\omega_0$ are both the identity matrix. 

Notice that the conjugation action of $w_qz_q$ on $\wt{\SO}_{4k+2l+3}(F)$ preserves $R_{s_1}(F)$. As a result, we have 
\begin{equation}\label{eq64}
	J_{R_{s_1},\psi_{1,\xi}}(\Theta_{4k+2l+3})\cong J_{R_{s_1},\psi_{1}}(\Theta^{w_qz_q}_{4k+2l+3}),
\end{equation}
where $\Theta^{w_qz_q}_{4k+2l+3}$ is the representation of the group $\wt{\SO}_{4k+2l+3}(F)$ obtained by pulling back the representation $\Theta_{4r+2l+3}$ via the conjugation by $w_qz_q$ on $\wt{\SO}_{4k+2l+3}(F)$. For any $g\in \wt{\SO}_{4k+2l+3}(F)$ and any function $\theta$ in the representation $\Theta_{4k+2l+3}$, the action of $g$ on $\theta$ is replaced by the action of $(w_qz_q)^{-1}gw_qz_q $ on $\theta$. Moreover, by \hyperref[prop63]{Proposition \ref*{prop63}},
\begin{equation}
	\Psi_1':   J_{R_{s_1},\psi_{1}}(\Theta^{w_qz_q}_{4k+2l+3})\cong J_{R_{s_1},\psi_{1,\xi_q}}(\Theta_{4k+2l+3})\rightarrow\pi\otimes J_{V_{\calO'},\psi_{\calO'}}(\Theta(\pi))
\end{equation}
factors through the Jacquet module $J_{R_{s^2_1},\psi_{1}}(\Theta^{w_qz_q}_{4k+2l+3})$ where we recall that $R_{s^2_1} =R_{s_1}R_{s_1,4k+2l-2s_1+3}$ and $R_{s_1,4k+2l-2s_1+3}$  is the unipotent radical of the maximal parabolic subgroup of $\SO_{4k+2l-2s_1+3}$ with Levi subgroup $\GL_{s_1} \times \SO_{4k+2l-4s_1+3}$. If we denote the resulting map by $\Psi_1^{''}$, then
\begin{equation}\label{661}
\Psi_1^{''}: 	 J_{R_{s^2_1},\psi_{1}}(\Theta^{w_qz_q}_{4k+2l+3})\rightarrow\pi\otimes J_{V_{\calO'},\psi_{\calO'}}(\Theta(\pi))
\end{equation}
is non-zero.

Let $V_{\calO'}^1 = V_{\calO'}\cap \SO_{2k+2l-2s_1+2}$. The intersection $R_{s_1,4k+2l-2s_1+3} \cap w_qz_q V_{\calO'}^1(w_qz_q)^{-1}$ is non-trivial as long as $q<s_1$. It contains the one parameter subgroup $\{x_\beta(r): r\in F\}$ associated to $\beta$ given by (\ref{root}). The root group $x_\beta(r)$ acts trivially on $J_{R_{s^2_1},\psi_{1}}(\Theta^{w_qz_q}_{4k+2l+3})$, but acts by the non-trivial character $\psi_{\calO'}$ on $J_{V_{\calO'},\psi_{\calO'}}(\Theta(\pi))$. This  implies that $\Psi_1^{''}$ must be zero when $q<s_1$.

It remains only the case of $q=s_1$. Note that $\psi_\xi =\psi_{\xi_{s_1}}$ is generic. Recall that the center $Z(H_{s_1}(F))$ acts trivially on $\Theta_{4k+2l+3}$. Hence, $J_{Z(H_{s_1})}(\Theta_{4k+2l+3}) \cong\Theta_{4k+2l+3}$. By Proposition 5.12(d) of \cite{BernsteinZelevinsky76} or Lemma A.1 of \cite{LE17}, there exists a short exact sequence of $Q_{s_1}(F)\times \wt{\SO}_{2k+1}(F)$-modules
\begin{equation}\label{eq65}
	0\rightarrow \ind_{ Q_{s_1}\times P^0_{s_1}}^{Q_{s_1}\times \wt{\SO}_{2k+1}}\left(J_{H_{s_1},\psi_{\xi}}(\Theta_{4k+2l+3})\right)\rightarrow J_{Z(H_{s_1})}(\Theta_{4k+2l+3})\rightarrow  J_{H_{s_1}}(\Theta_{4k+2l+3})\rightarrow 0,
\end{equation}
where $Q_{s_1}= (\wt{\GL}_{s_1}\times \wt{\SO}_{2k+2l-2s_1+2})V_{s_1}$ is a maximal parabolic subgroup of $\wt{\SO}_{2k+2l+2}$. The product $Q_{s_1}(F)\times \wt{\SO}_{2k+1}(F)$ is the normalizer of $H_{s_1}(F)$ in $\wt{\SO}_{4k+2l+3}(F)$, while the stablizer of $\psi_\xi$ in $Q_{s_1}(F)\times \wt{\SO}_{2k+1}(F)$ is
\[
 (\wt{\GL}_{s_1}^\Delta\times \wt{\SO}_{2k+2l-2s_1+2})V_{s_1}\times P^0_{s_1}\cong  Q_{s_1}\times P^0_{s_1},
\]
where $\wt{\GL}_{s_1}$ is diagonally embeded into the Levi subgroups of the two parabolic subgroups $Q_{s_1} $ and $P_{s_1}$. Here, $\ind_{ Q_{s_1}\times P^0_{s_1}}^{Q_{s_1}\times \wt{\SO}_{2k+1}}$ is the induction with compact support as in \cite{BernsteinZelevinsky77}. 


The two functors  $J_{V_{s_1},\psi_1}$ and $ \ind_{ Q_{s_1}\times P^0_{s_1}}^{Q_{s_1}\times \wt{\SO}_{2k+1}}$ satisfy the following relation
\[
J_{V_{s_1},\psi_1}\circ \ind_{Q_{s_1}\times P^0_{s_1}}^{Q_{s_1}\times\wt{\SO}_{2k+1}} \cong \ind_{\wt{\GL}_{s_1}\times (Q'_{s_1})^0\times P^0_{s_1}}^{\wt{\GL}_{s_1}\times (Q'_{s_1})^0\times\wt{\SO}_{2k+1}}\circ J_{V_{s_1},\psi_1}.
\]
Here, $\wt{\GL}^\Delta_{s_1}(F)\times (Q'_{s_1})^0(F)$ is the stablizer of $\psi_1$ in the Levi factor $\wt{\GL}_{s_1}(F)\times \wt{\SO}_{2k+2l-2s_1+2}(F)$, where $Q'_{s_1}$ is the parabolic subgroup of $ \wt{\SO}_{2k+2l-2s_1+2}$ with Levi subgroup  $\wt{\GL}_{s_1}\times \wt{\SO}_{2k+2l-4s_1+2}$.  Hence, 
\begin{equation}\label{eq661}
J_{V_{s_1},\psi_1}\left(\ind_{ Q_{s_1}\times P^0_{s_1}}^{Q_{s_1}\times \wt{\SO}_{2k+1}}\left(J_{H_{s_1},\psi_{\xi}}(\Theta_{4k+2l+3})\right) \right)
\end{equation}
is isomorphic to 
\begin{equation}\label{eq662}
\ind_{\wt{\GL}^\Delta_{s_1}\times (Q'_{s_1})^0\times P^0_{s_1}}^{\wt{\GL}_{s_1}\times (Q'_{s_1})^0\times\wt{\SO}_{2k+1}}\left(J_{R_{s_1},\psi_{1,\xi}}(\Theta_{4k+2l+3})\right).
\end{equation}

Moreover, $w_{s_1}z_{s_1}$ acts on $\Theta_{4k+2l+3}$ and preserves $R_{s_1}$. By (\ref{eq64}), we further deduce that (\ref{eq662}) is isomorphic to
\begin{equation}\label{eq663}
\ind_{\wt{\GL}^\Delta_{s_1}\times\wt{\SO}_{2k+2l-2s_1+2}\times P^0_{s_1}}^{\wt{\GL}_{s_1}\times \wt{\SO}_{2k+2l-2s_1+2}\times\wt{\SO}_{2k+1}}\left(J_{R_{s_1},\psi_1}(\Theta^{w_{s_1}z_{s_1}}_{4k+2l+3})\right).
\end{equation}
Hereafter, we denote $\ind_{\wt{\GL}^\Delta_{s_1}\times\wt{\SO}_{2k+2l-2s_1+2}\times P^0_{s_1}}^{\wt{\GL}_{s_1}\times \wt{\SO}_{2k+2l-2s_1+2}\times\wt{\SO}_{2k+1}}$ by $\ind_{P^0_{s_1}}^{\wt{\SO}_{2k+1}}$ for simplicity.

Applying the functor  $J_{V_{s_1},\psi_1}$  to (\ref{eq65}), we obtain the short exact sequence
\[
	0\rightarrow \ind_{P^0_{s_1}}^{\wt{\SO}_{2k+1}}\left(J_{R_{s_1},\psi_1}(\Theta^{w_{s_1}z_{s_1}}_{4k+2l+3})\right)\rightarrow J_{V_{s_1},\psi_1}(\Theta_{4k+2l+3})\rightarrow  J_{R_{s_1},\psi_1}(\Theta_{4k+2l+3})\rightarrow 0.
\]
We have just shown that, corresponding to $q =0$,
\[
\Hom_{\wt{\SO}_{2k+1}\times M^{\psi_{\calO'}}(\calO')V_{\calO'}} \left(J_{R_{s_1},\psi_{1}}(\Theta_{4k+2l+3}),\pi\otimes J_{V_{\calO'},\psi_{\calO'}}(\Theta(\pi))\right) = 0.
\]
Then, by (\ref{eq63}), (\ref{eq65}) and (\ref{eq663}), we have
	\begin{equation}\label{eq66}
\Hom_{\wt{\SO}_{2k+1}\times M^{\psi_{\calO'}}(\calO')V_{\calO'}}\left(\ind_{P^0_{s_1}}^{ \wt{\SO}_{2k+1}}\left(J_{R_{s_1},\psi_1}(\Theta^{w_{s_1}z_{s_1}}_{4k+2l+3})\right), \pi\otimes J_{V_{\calO'},\psi_{\calO'}}(\Theta(\pi))\right) \neq 0.
	\end{equation}
By \hyperref[prop63]{Proposition \ref*{prop63}}, we further deduce that 
	\begin{equation}\label{eq665}
	\Hom_{\wt{\SO}_{2k+1}\times M^{\psi_{\calO'}}(\calO')V}\left(\ind_{P^0_{s_1}}^{ \wt{\SO}_{2k+1}}\left(J_{R_{s^2_1},\psi_1}(\Theta^{w_{s_1}z_{s_1}}_{4k+2l+3})\right), \pi\otimes J_{V_{\calO'},\psi_{\calO'}}(\Theta(\pi))\right) \neq 0.
\end{equation}

Continue the same argument by replacing (\ref{eq62}) by any non-zero $\Psi_2$ in the $\Hom$-space (\ref{eq665}). Recall the unipotent subgroup $R_{s_2}$ defined by (\ref{rs2}) and the Heisenberg quotient $H_{s_2} =V_{s_2}\setminus R_{s_2}$. Consider the twisted Jacquet modules with respect to $H_{s_2}(F)$ whose center acts trivially on $J_{R_{s^2_1},\psi_1}(\Theta^{w_{s_1}z_{s_1}}_{4k+2l+3})$. We note that the normalizer of $H_{s_2}$ in $ \wt{\SO}_{2k+1}$ is $\wt{\SO}_{2k-2s_1+1} \subset P^0_{s_1}$. By a similar argument, any non-zero homomorphism
\[
\Psi_2: \ind_{P^0_{s_1}}^{ \wt{\SO}_{2k+1}}\left(J_{R_{s^2_1},\psi_1}(\Theta^{w_{s_1}z_{s_1}}_{4k+2l+3})\right)\rightarrow \pi\otimes J_{V_{\calO'},\psi_{\calO'}}(\Theta(\pi))
\]
must factor through the non-zero twisted Jacquet module with respect to a generic character on $H_{s_2}(F)$, which is 
\[
\ind_{P^0_{s_1}}^{ \wt{\SO}_{2k+1}}\left(\ind_{P^0_{s_2}}^{ \wt{\SO}_{2k-2s_1+1}}J_{R_{s_2},\psi^\circ_2}\left( J_{R_{s^2_1},\psi_1}(\Theta^{w_{s_2}z_{s_2}}_{4k+2l+3})\right)\right).
\] 
Here, $\psi_2^\circ$  is $ \psi_{\calO'}$ restricted to $V_{s_2}$, and $P_{s_2}$ is the standard maximal parabolic subgroup of $\wt{\SO}_{2k-2s_1+1}$ with Levi subgroup $\wt{\GL}_{s_2}\times \wt{\SO}_{2k-2(s_1+s_2)+1}$.

By the transitivity of induction
\[
\ind_{P^0_{s_1}}^{ \wt{\SO}_{2k+1}}\ind_{P^0_{s_2}}^{ \wt{\SO}_{2k-2s_1+1}} \cong \ind_{P^0_{s_2}}^{\wt{\SO}_{2k+1}}.
\]
By \hyperref[prop63]{Proposition \ref*{prop63}}, we obtain that 
	\begin{equation}\label{eq666}
	\Hom_{\wt{\SO}_{2k+1}\times M^{\psi_{\calO'}}(\calO')V_{\calO'}}\left(\ind_{P^0_{s_2}}^{\wt{\SO}_{2k+1}}\left(J_{R_{s^2_2},\psi_2}(\Theta^{w_{s_2}z_{s_2}}_{4k+2l+3})\right), \pi\otimes J_{V_{\calO'},\psi_{\calO'}}(\Theta(\pi))\right)
\end{equation}
is non-zero. Continuing the same argument repeatedly, we obtain that
	\begin{equation}\label{eq67}
	\Hom_{\wt{\SO}_{2k+1}\times M^{\psi_{\calO'}}(\calO')V_{\calO'}}\left(\ind_{\wt{\SO}_l}^{\wt{\SO}_{2k+1}}\left(J_{R_{s^2_p},\psi_p}(\Theta^{w_lz_l}_{4k+2l+3})\right), \pi\otimes J_{V_{\calO'},\psi_{\calO'}}(\Theta(\pi))\right) \neq 0.
\end{equation}
Here, $R_{s^2_p}$ is the unipotent radical of the standard maximal parabolic subgroup of $\SO_{4k+2l+3}$ with Levi part 
\[
\GL^{2(n_1-n_2)}_{s_1}\times \GL^{2(n_2-n_3)}_{s_2}\times \cdots\times \GL^{2(n_{p-1}-n_p)}_{s_{p-1}}\times \GL^{2n_p}_l\times \SO_{4l+1}.
\]
The character $\psi_p:R_{s^2_p}(F)\rightarrow \bbC^\times$ is the product of $\psi_1$ and the characters corresponding to the non-zero twisted Jacquet modules in each of the repeated steps.

We now proceed to the last step. Let $R_p $ be the unipotent radical of the maximal parabolic subgroup of $\SO_{4k+1}$ with Levi part $\GL_l\times \SO_{2l+1}$. Consider the action of the abelian quotient $H_p(F) = \omega_lz_l V^p_{\calO'}(\omega_lz_l)^{-1}\setminus R_p$ on \begin{equation}\label{jac6}
\ind_{\wt{\SO}_lU}^{\wt{\SO}_{2k+1}}\left(J_{R_{s^2_p},\psi_p}(\Theta^{w_lz_l}_{4k+2l+3})\right).
\end{equation}
If there is any non-zero vector in (\ref{jac6}) on which $H_p(F)$ act by a character, then any non-zero $\Psi_p$ in (\ref{eq67}) must factor through the non-zero twisted Jacquet module of (\ref{jac6}) with respect to $H_p(F)$ and the corresponding character. Following the same argument as in the proof of \hyperref[Thm1]{Theorem \ref*{Thm1}}, we only need to consider  characters $\psi_\xi$ on $H_p/Z(H_p)(F)$ such that the product of $\psi_{\calO'}$ and $\psi_\xi$ restricted to $R_p(F)$ is non-generic.  That is, the matrix in $\Mat_{l\times (2l+1)}(F)$ corresponding to $\psi_{\calO'}\psi_\xi$ has totally isotropic row space. We deduce that such a character $\psi_\xi$ must correspond to a matrix given by 
	\[
	\xi =\begin{pmatrix}
		\lambda_1\\
		\vdots\\
		\lambda_l
	\end{pmatrix} \in\Mat_{l\times l}(F)
	\]
satisfying the following conditions:
 \begin{enumerate}
	\item Each $\lambda_i$ except $\lambda_{\frac{l+1}{2}}$ is non-zero isotropic. \\
	\item Each pair $\lambda_i$ and $\lambda_{l+1-i}$ for $i=1,2,\cdots, \frac{l-1}{2}$ is a hyperbolic pair, i.e.
	\[
	(\lambda_i,\lambda_{l+1-i}) = 1,\;i=1,2,\cdots,\frac{l-1}{2}.
	\]
	These hyperbolic pairs are mutually orthogonal.
	\item $\lambda_{\frac{l+1}{2}}$ is non-isotropic with unit length. 
\end{enumerate}

The group $\SO_l(F)$ acts transitively on the set of these matrices. This allows us to pick the identity matrix $I_l$ as a representative. Denote the corresponding character on $H_p/Z(H_p)(F)$ by $\psi_{I_l}$. Any non-zero $\Psi_p$ in (\ref{eq67})  factors through the non-zero twisted Jacquet module of (\ref{jac6}) given by
\begin{equation}
	\ind_{\wt{\SO}_l}^{\wt{\SO}_{2k+1}}\left(	\ind_{\wt{\SO}^\Delta_l}^{\wt{\SO}_{l}\times \wt{\SO}_l}\left(J_{R_p,\psi_{V_{\calO'},I_l}}J_{R_{s^2_p},\psi_p}(\Theta^{w_lz_l}_{4k+2l+3})\right)\right) \cong \ind_{U_\calO}^{\wt{\SO}_{2k+1}}\left(J_{R,\psi_{V_{\calO'},p,I_l}}(\Theta^{w_lz_l}_{4k+2l+3})\right).
\end{equation}
We note that the normalized induction $\ind_{U_\calO}^{\wt{\SO}_{2k+1}}$ is $\ind_{L^\Delta(\calO)\times U_\calO}^{L(\calO)\times \wt{\SO}_{2k+1}}$ where
\[
L(\calO) \cong \wt{\GL}_{s_1}^{n_1-n_2} \times \wt{\GL}_{s_2}^{n_2-n_3}\times \cdots \times \wt{\GL}_{l}^{n_p}\times\wt{\SO}_l ,
\]
and
\[
L(\calO)^{\Delta} \cong \left(\wt{\GL}^\Delta_{s_1}\right)^{n_1-n_2}\times\left(\wt{\GL}^\Delta_{s_2}\right)^{n_2-n_3}\times \cdots \times \left(\wt{\GL}^\Delta_{l}\right)^{n_p}\times \wt{\SO}^\Delta_l.
\]
 Thus, we conclude that 
	\begin{equation}\label{eq68}
	\Hom_{\wt{\SO}_{2k+1}\times M^{\psi_{\calO'}}(\calO')V_{\calO'}}\left(\ind_{U_\calO}^{\wt{\SO}_{2k+1}}\left(J_{R,\psi_{V_{\calO'},p,I_l}}(\Theta^{w_lz_l}_{4k+2l+3})\right), \pi\otimes J_{V_{\calO'},\psi_{\calO'}}(\Theta(\pi))\right) \neq 0.
\end{equation}

We now examine the action of $U_\calO(F)$ on (\ref{eq68}). The group $U_\calO(F)$ acts on the left by the character $\psi_\calO$, since $(w_lz_l)^{-1}U_\calO w_lz_l \subset R$ and $\psi_{V_{\calO'},p,I_l}$ is defined on $R(F)$. As a result, there exists a non-zero vector in $\pi$ on which $U_\calO(F)$ acts by the same character $\psi_\calO$, and
	\begin{equation}\label{eq69}
	\Hom_{M^{\psi_\calO}(\calO)\times M^{\psi_{\calO'}}(\calO')}\left(J_{R,\psi_{V_{\calO'},p,I_l}}(\Theta^{w_lz_l}_{4k+2l+3}), J_{U_\calO,\psi_\calO}\left(\pi\right)\otimes J_{V_{\calO'},\psi_{\calO'}}(\Theta(\pi))\right) \neq 0.
\end{equation}
Thus, $J_{U_\calO,\psi_\calO}\left(\pi\right)$ is non-zero.  This completes the proof. 
	\end{proof}
\bibliographystyle{plain}
\bibliography{Bibtexfile}{}
\end{document}